\documentclass[12pt]{article}

\usepackage[russian,english]{babel}

\usepackage{graphicx}
\usepackage{amsmath}
\usepackage{amssymb}
\usepackage{amsthm}
\usepackage{geometry}
\usepackage{tikz}
\usepackage{enumitem}
\usepackage{csquotes}

\newcommand{\X}{\mathfrak X}
\newcommand{\Y}{\mathcal Y}
\newcommand{\Lo}{\mathsf{L}}

\newcommand{\eps}{\varepsilon}
\newcommand{\st}{\mathop{st}}

\newcommand*\circled[1]{\tikz[baseline=(char.base)]{
            \node[shape=circle,draw,inner sep=0.8pt] (char) {#1};}}
\newcommand{\rc}{\circled{R}}
\newcommand{\bco}{\circled{$b_1$}}
\newcommand{\bct}{\circled{$b_2$}}
\newcommand{\bci}{\circled{$b_i$}}
\newcommand{\mco}{\circled{$m_1$}}
\newcommand{\mct}{\circled{$m_2$}}
\newcommand{\mci}{\circled{$m_i$}}

\newcommand{\NN}{\mathbb{N}}

\newcommand{\QQ}{\mathbb{Q}}

\newcommand{\RR}{\mathbb{R}}

\newcommand{\set}[1]{\left\{#1\right\}}

\newcommand{\setdef}[2][x]{\set{#1\,\left |\,#2 \right .}}

\newcommand{\pair}[1]{ \langle #1 \rangle}

\newcommand{\romb}{\diamondsuit}
\newcommand{\Log}{{Log}}

\newcommand{\MLog}{{\mathcal{ML}}}
\newcommand{\logic}[1]{\mathsf{#1}}
\newcommand{\pmor}{\twoheadrightarrow}

\newcommand{\Top}{\mathop{Top}}

\newtheorem{theorem}{\textsc{Theorem}}[section]
\newtheorem{corollary}[theorem]{\textsc{Corollary}}
\newtheorem{lemma}[theorem]{\textsc{Lemma}}
\newtheorem{proposition}[theorem]{\textsc{Proposition}}

\theoremstyle{remark}
\newtheorem{remark}[theorem]{\textmd{\textsc{Remark}}}
\newtheorem*{remark*}{\textmd{\textsc{Remark}}}

\theoremstyle{definition}
\newtheorem{definition}[theorem]{Definition}
\newtheorem*{definition*}{Definition}

\title{	Topological square of logic S4.1}
\author{A. Kashchenko, A. Kudinov\\
HSE University}
\date{}

\begin{document}
\maketitle
\begin{abstract}
	In this paper, we find the axiomatization for the topological square of S4.1. This is the first known topological square of a modal logic that differs from both the fusion and the Kripke product. We also prove the finite model property and decidability for this logic.
\end{abstract}
\section{Introduction}
	It is often necessary to study logics with more than one modality. This motivates the study of different ways of combining modal logics. In this paper, we discuss the topological product of modal logics, which is a natural generalization of the better-studied Kripke product.
	
	The study of Kripke products of modal logics began with V.B. Shehtman in 1978 \cite{shehtman_two_dimensional}. In \cite{gabbay_shehtman_products}, it is shown that $L_1 \ast L_2 + \Box_1 \Box_2 p \leftrightarrow \Box_2 \Box_1 p + \romb_1 \Box_2 p \to \Box_2 \romb_1 p \subseteq L_1 \times L_2$.  At the same time, the topological product of modal logics was defined only in 2005 in \cite{van_benthem_multimodal}. Historically, topological semantics predates Kripke semantics: the foundational work of McKinsey and Tarski already established a connection between modal logic and topology in 1944 \cite{mckinsey_tarski_algebra_topology}. A central result of McKinsey and Tarski is that the modal logic of any topological space is an extension of $\logic{S4}$ and that $\logic{S4}$ is exactly the logic of all topological spaces.
	
	It is worth noting that topological products are sensitive to more structural properties of spaces that are not reflected in their one-dimensional modal logics. For instance, Kremer showed (\cite{kremer_topological_product_s4_s5}) that although the rational line and the real line validate the same modal logic, i.e. $Log(\QQ) = Log(\RR) = \logic{S4}$, their topological squares are different: $Log(\QQ \times \QQ) \neq Log(\RR \times \RR)$.
	
	The world of topological products is more diverse than the world of Kripke products.
	There is an example of a topological product that is equal to the fusion of logics: $\logic{S4} \times_t \logic{S4} = \logic{S4} \ast \logic{S4} = Log(\QQ \times \QQ)$ (\cite{van_benthem_multimodal}). There are examples of topological products of logics that are equal to their Kripke products: $\logic{S5} \times_t \logic{S5} = \logic{S5} \times \logic{S5}$ (cf. \cite{kremer_topological_product_s4_s5}).
	It is also known that the topological product lies  between the fusion and the Kripke product: $L_1 \ast L_2 \subseteq L_1 \times_t L_2 \subseteq L_1 \times L_2$ (\cite{kremer_topological_product_s4_s5}).
	For example, there are topological products that are equal to their semiproduct: the product of $\logic{S4}$ and $\logic{S5}$ (cf. \cite{kremer_topological_product_s4_s5}).
	A nontrivial example of the topological product that does not fall into these three categories was found in 2024 in \cite{kudinov_topological_product_mckinsey}: it turned out that the product $\logic{S4.1}\times_t\logic{S4}$ is equal neither to the fusion, nor to the Kripke product, nor to the Kripke semiproduct. In this paper, we examine the topological square of $\logic{S4.1}$, which also turns out to be nontrivial.

\section{Basic Concepts and Definitions}

\begin{definition}
 Let $ \mathrm{PROP} $ be a countable set of \emph{propositional variables}. The language of \emph{modal formulas} with $N$ modalities (or \emph{$N$-modal formulas}), denoted $\MLog_N$, is the smallest set of words over the alphabet $\mathrm{PROP} \cup \{\bot, \to, \Box_1, \ldots, \Box_N, (, )\}$, satisfying the following conditions:

\begin{enumerate}
    \item $\mathrm{PROP} \subseteq \MLog_N$;
    
    \item $\bot \in \MLog_N$;
    
    \item If $A, B \in \MLog_N$, then $(A\to B)\in\MLog_N$; 
    
    \item If $A \in \MLog_N $ and $ i \in \{1,\ldots,N\}$, then $ \Box_iA\in\MLog_N$.
\end{enumerate}
    
\end{definition}

In Backus–Naur form, it can be written as:
	$$
	A ::= p\; |\;\bot \; | \; (A \to A) \; | \; \Box_i A,
	$$
	where $p\in \mathrm{PROP}$ is a propositional variable and $\Box_i$ is a modal operator ($ i= 1, \ldots, N$).

Other logical connectives are treated as derived. The \emph{dual modality} is defined as follows: $ \Diamond_i \varphi:= \neg \Box_i \neg \varphi$. If the language contains only one modality, the index \( i \) is omitted, and we write $\Box$ and $ \MLog $.

\begin{definition} \label{def:modallogic}
A ($N$-)modal logic is the set of N-modal formulas such that:
	\begin{itemize}
		\item it contains all classical propositional tautologies;
		\item it contains the normality axioms
        $$\Box_i (p \to q) \to (\Box_i p \to \Box_i q);$$ 
  
		\item it is closed under the following inference rules:
		\begin{enumerate}
			\item \emph{Substitution} (\textsc{Sub}): \( \dfrac{A}{A[p/B]} \), where $A[p/B]$ is the result of uniformly substituting every occurrence of the variable $p$ by the formula $B$; 
			\item \emph{Modus Ponens} (\textsc{MP}): \( \dfrac{A,\ A \to B}{B} \);
			\item \emph{Necessitation} (\textsc{Nec}): \( \dfrac{A}{\Box_i A} \).
		\end{enumerate}
	\end{itemize}
	The minimal $N$-modal logic is denoted $\logic{K}_N$, and $\logic{K}_1 = \logic{K}$.
\end{definition}


Let \( \logic{L} \) be a modal logic, and let \( \Gamma \) be a set of formulas. Then \( \logic{L} + \Gamma \) defines a minimal modal logic containing \( \logic{L} \cup \Gamma \). 
If \( \Gamma \) consists of a single formula \( A \) we write \( \logic{L} + A \) instead of \( \logic{L} + \{A\} \).


\begin{definition}
	Let $ X \ne \varnothing $. A \emph{topology} on a set $ X $ is a set $ T $ of subsets of $ X $ such that:
	\begin{itemize}
		\item $\varnothing, X \in  T$;
		\item if $ U_1, U_2 \in T $ then $ U_1 \cap U_2 \in T $;
		\item if $\displaystyle \set{U_\alpha}_{\alpha\in I} \subseteq T$ then $\displaystyle \bigcup_{\alpha \in I} U_\alpha \in T $.
	\end{itemize}
	A \emph{topological space} is a pair $ (X, T) $. The elements of $T$ are called \emph{open} sets. If $x \in U \subseteq X$, we say that $U$ is a \emph{neighborhood} of the point $x$. If also $U \in T$, we say that $U$ is an \emph{open neighborhood} of point $x$.

    A \emph{Bitopological space} is a tuple $(X, T_1, T_2)$, where $T_1$ and $T_2$ are topologies on $X$.
\end{definition}

\begin{definition}
Let $(X, T)$ be a topological space. A family $\beta$ of subsets of $X$ is called a base for the topology $T$ if
\[
T = \left\{ \bigcup_{G \in M} G \mid M \subseteq \beta \right\}.
\]
\end{definition}

\begin{theorem}[\cite{engelking_general_topology}]
Let $X \neq \varnothing$ and $ B \subseteq 2^X $. Then $B$ is a base for a topology on $X$ if and only if
\begin{itemize}
    \item $B$ is a cover of $X$,
    \item $\forall G_1, G_2 \in B \; \forall x \in G_1 \cap G_2\ \exists G \in B \; x \in G \subseteq G_1 \cap G_2$.
\end{itemize}
\end{theorem}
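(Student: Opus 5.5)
We prove both directions directly from the definition of a base, taking
\[
T_B = \setdef[U]{U = \bigcup_{G \in M} G \text{ for some } M \subseteq B}
\]
as the only candidate topology. If $B$ is a base for some topology $T$, that $T$ is forced to equal $T_B$. So the theorem amounts to: $T_B$ is a topology if and only if the two conditions hold.

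\emph{Necessity.} Suppose $B$ is a base for a topology $T$, so $T = T_B$. Since $X \in T$, we can write $X = \bigcup_{G \in M} G$ for some $M \subseteq B$, so $B$ covers $X$. For the second condition, note that every $G \in B$ lies in $T$, being the union of the one-element family $\{G\}$. Hence, for $G_1, G_2 \in B$, the intersection $G_1 \cap G_2$ lies in $T$. It is therefore a union of members of $B$. If $x \in G_1 \cap G_2$, then $x$ belongs to one member $G$ of that union, and $x \in G \subseteq G_1 \cap G_2$.

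\emph{Sufficiency.} Assume both conditions and check the topology axioms for $T_B$.
\begin{itemize}
\item $\varnothing \in T_B$: take $M = \varnothing$.
\item $X \in T_B$: this is exactly the covering condition.
\item Closure under arbitrary unions: a union of unions of members of $B$ is again a union of members of $B$; take the union of the index families.
\item Closure under binary intersections (the only substantive step): let $U_1 = \bigcup_{G \in M_1} G$ and $U_2 = \bigcup_{G \in M_2} G$.
\end{itemize}

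For the intersection step, fix $x \in U_1 \cap U_2$. Choose $G_1 \in M_1$ and $G_2 \in M_2$ with $x \in G_1 \cap G_2$. By the second condition there is $G_x \in B$ with $x \in G_x \subseteq G_1 \cap G_2 \subseteq U_1 \cap U_2$. Then
\[
U_1 \cap U_2 = \bigcup_{x \in U_1 \cap U_2} G_x ,
\]
since each $G_x$ is contained in $U_1 \cap U_2$ and each point $x$ of $U_1 \cap U_2$ lies in its own $G_x$. This choice of $G_x$ for every $x$ uses the axiom of choice. Thus $U_1 \cap U_2 \in T_B$, so $T_B$ is a topology and $B$ is a base for it by definition.

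The main obstacle is the intersection step. Everything else is bookkeeping about unions.
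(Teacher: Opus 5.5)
Your proof is correct, and it is the standard textbook argument from Engelking; the paper gives no proof of its own, it only cites that source. One small refinement: choice is never actually needed. In both steps you can use the canonical family, writing $U_1 \cap U_2 = \bigcup\{G \in B : G \subseteq U_1 \cap U_2\}$ and $\bigcup_\alpha U_\alpha = \bigcup\{G \in B : G \subseteq \bigcup_\alpha U_\alpha\}$; this also covers the union step, where ``take the union of the index families'' tacitly selects an $M_\alpha$ for each $\alpha$.
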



\begin{definition}
	Let $ \X = (X, T_1, \ldots, T_N) $ be a space with $N$ topologies (in this paper $N\in\set{1,2}$). A \emph{valuation} on $ \X $ is a function
    \[
        V:\mathrm{PROP} \to 2^X.
    \]

    A pair $M = (\X, V) $ is called a \emph{topological model}.
\end{definition}

\begin{definition}
    The truth of a formula ``$\models$'' in a model $M = (\X, V)$ is defined by induction:
	\begin{align*}
		M, x \models p &\iff x \in V(p),\ \hbox{for $p\in \mathrm{PROP}$};\\
		M, x \not\models \bot; &\\
		M, x \models A \to B &\iff \bigl(M, x \models A \Rightarrow M, x \models B \bigr);\\
		M, x \models \Box_i A &\iff \exists U \in T_i \left ( x \in U \; \& \; \forall y \in U (M, y \models A) \right ).
	\end{align*}

    A formula $ A $ is \emph{true} in a topological model $ M $ (denoted $M \models A$) if $ \forall x\in \X (M,x \models A)$.

    A formula $ A $ is \emph{valid} in a space $ \X $ (denoted $\X \models A$) if $ \forall V ((\X,V) \models A)$.

    A formula $ A $ is \emph{valid} in a class of topological spaces $ \mathcal{C} $ (denoted $\mathcal{C} \models A$) if $ \forall \X \in \mathcal{C} (\X \models A)$.
    
    Let $\logic{L}$ be a logic. Space $\X$ is called \emph{$\logic{L}$-space} if $\X\models \logic{L}$.
\end{definition}

\begin{definition}
The logic of a class of topological spaces $\mathcal{C}$ is
\[
\Log(\mathcal{C}) = \{ A \mid \forall \X \in \mathcal{C} \, (\X \models A) \}.
\]
\end{definition}
	For a singleton, we omit the curly brackets (for instance $Log(\X) = Log(\set{\X})$).



\begin{theorem}[\cite{mckinsey_tarski_algebra_topology}]
For any topological space $\X$,
\begin{itemize}
	\item $\X \models \Box p \to p$ (reflexivity axiom);
	\item $\X \models \Box p \to \Box \Box p$ (transitivity axiom).
\end{itemize}
\end{theorem}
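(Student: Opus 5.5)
We verify both items directly from the truth definition for $\Box$ (with $N=1$): $M,x\models\Box A$ holds iff some open neighbourhood $U$ of $x$ satisfies $A$ at every point. So the extension of $\Box A$ is the interior of the extension of $A$. Fix an arbitrary topological space $\X=(X,T)$, an arbitrary valuation $V$, put $M=(\X,V)$, and fix a point $x\in X$. Since $x$, $V$ and $\X$ are arbitrary, it suffices to show that both implications hold at $x$ in $M$.

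\textbf{Reflexivity.} Assume $M,x\models\Box p$. By definition there is $U\in T$ with $x\in U$ and $M,y\models p$ for all $y\in U$. Taking $y=x$, which lies in $U$, we get $M,x\models p$. Thus $M,x\models \Box p\to p$.

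\textbf{Transitivity.} Assume $M,x\models\Box p$, witnessed by an open $U\ni x$ on which $p$ holds everywhere. We show that the same $U$ witnesses $\Box\Box p$ at $x$. Take any $y\in U$. Then $U$ is an open neighbourhood of $y$ on which $p$ holds everywhere, so $M,y\models\Box p$. Hence $\Box p$ holds at every point of $U$. Since $U$ is open and contains $x$, this gives $M,x\models\Box\Box p$.

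Both arguments only unfold the semantic clause. The one point that needs care is the transitivity step, where the witnessing set must be open so that it can be reused as a neighbourhood of each of its own points. This is exactly where the restriction of the truth clause to $U\in T$ is used; the topology axioms themselves are not needed. This is why every topological space validates $\Box p\to p$ and $\Box p\to\Box\Box p$, i.e.\ $\logic{S4}\subseteq \Log(\X)$ once normality and the closure rules are also checked.
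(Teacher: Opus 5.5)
Your proof is correct: reflexivity holds because the witnessing open set contains $x$, and transitivity holds because the same open witness $U$ is an open neighbourhood of each of its own points; in other words, the interior operator is deflationary and idempotent. The paper gives no proof of its own for this statement (it only cites McKinsey and Tarski), and your direct unfolding of the truth clause is the standard argument behind that citation.
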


\begin{definition}
	We define logic
	\[ 
	\logic{S4} = \logic{K} + \set{\Box p \to p, \Box p \to \Box \Box p}.
	 \]
\end{definition}

\begin{theorem}[\cite{mckinsey_tarski_algebra_topology}]
    $\logic{S4}$ is the logic of the class of all topological spaces.
\end{theorem}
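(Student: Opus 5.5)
We prove the two inclusions separately: soundness, $\logic{S4} \subseteq \Log(\mathcal{C})$ where $\mathcal{C}$ is the class of all topological spaces, and completeness, $\Log(\mathcal{C}) \subseteq \logic{S4}$.

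\emph{Soundness.} First check that the set of formulas valid in a fixed space $\X$ is a modal logic in the sense of Definition~\ref{def:modallogic}.
Tautologies are valid because the Boolean clauses are evaluated pointwise. For the normality axiom, suppose $U_1$ witnesses $\Box(p\to q)$ at $x$ and $U_2$ witnesses $\Box p$ at $x$; then the open set $U_1\cap U_2$ witnesses $\Box q$. Modus Ponens preserves validity trivially. Necessitation does too: if $A$ holds everywhere, then $X$ itself is an open neighborhood witnessing $\Box A$. Substitution preserves validity because any valuation $V'$ induced by substituting $B$ for $p$ is again a valuation of $\X$; formally this is an induction on the formula showing $(\X,V),x\models A[p/B]$ iff $(\X,V'),x\models A$, where $V'(p)$ is the truth set of $B$ under $V$. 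By the preceding theorem of McKinsey–Tarski, $\Box p\to p$ and $\Box p\to\Box\Box p$ are valid in every space, so $\logic{S4}\subseteq\Log(\X)$ for each $\X$. Intersecting over all $\X$ gives $\logic{S4}\subseteq\Log(\mathcal{C})$.

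\emph{Completeness.} Let $A\notin\logic{S4}$; we must find a space refuting $A$. Use Kripke completeness of $\logic{S4}$ with respect to reflexive transitive frames, obtained by the canonical model construction. The canonical relation is reflexive and transitive because the axioms T and 4 belong to every maximal consistent set. This yields a preorder $(W,R)$, a valuation, and a world refuting $A$. Turn $(W,R)$ into a topological space by taking as opens the upward-closed sets (the Alexandrov topology). These sets are closed under arbitrary unions and intersections, so they form a topology. The key point is that the smallest open neighborhood of $w$ is $R(w)=\{v : wRv\}$, which is upward closed by transitivity and contains $w$ by reflexivity. An induction on formulas then shows that topological truth coincides with Kripke truth. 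In the $\Box$ case, $\Box B$ holds topologically at $w$ iff some upset containing $w$ lies inside $\|B\|$, iff $R(w)\subseteq\|B\|$. So $A$ fails in this space, hence $A\notin\Log(\mathcal{C})$.

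The main obstacle is the Kripke completeness of $\logic{S4}$, specifically the truth lemma for the canonical model. If the paper permits citing that standard result, the rest is routine. The only delicate topological step is the correspondence in the $\Box$ case, and it rests entirely on $R(w)$ being the least open neighborhood of $w$.
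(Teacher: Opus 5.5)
Your proof is correct. The paper gives no proof of this statement; it only cites McKinsey and Tarski, so the natural comparison is with their original argument.

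That argument is algebraic. One passes to closure algebras and shows that an equation failing in some closure algebra already fails in a finite one (the finite model property of $\logic{S4}$). One then proves that every finite closure algebra embeds into the closure algebra of any dense-in-itself separable metric space. This takes considerably more work, but it gives a much stronger conclusion: $\logic{S4}$ is the logic of a single space such as $\RR$ or $\QQ$. This is the fact quoted in the paper's introduction, $\Log(\QQ)=\Log(\RR)=\logic{S4}$.

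Your route is the modern textbook proof. Soundness comes from checking that $\Log(\X)$ contains the normality, T and 4 axioms and is closed under the rules. Completeness comes from the canonical Kripke model turned into an Alexandrov space. It is short and self-contained, and it uses exactly the machinery the paper develops elsewhere: Theorem~\ref{canon_model_prop}, the construction $\Top(F)$, and the equality $\Log(F)=\Log(\Top(F))$, which is precisely what your $\Box$-case induction proves. Within this paper it could be compressed to ``$\logic{S4}$ is Kripke complete, and $\Log(F)=\Log(\Top(F))$ for reflexive transitive $F$''.

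Its limitation is that every countermodel it produces is an Alexandrov space, in which each point has a least neighbourhood. So it proves completeness for the class of all spaces, but says nothing about specific spaces like $\RR$ or $\QQ$, where points have no minimal neighbourhoods.
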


\vspace{20pt}

We also need Kripke semantics.

\begin{definition}
A \textit{Kripke $N$-frame} is a tuple  
\[
F = (W, R_1, \ldots, R_N),
\]
where \( W \neq \varnothing \) is a non-empty set, its elements are called \textit{possible worlds} or \textit{points}, and \( R_i \subseteq W \times W \) is a binary relation on $W$ for \( i \in \set{1, \ldots, N} \).
\end{definition}

\begin{definition}
    Let \( F = (W, R_1, \ldots, R_N) \) be a Kripke frame. A \emph{valuation} on $F$ is a function $V: \mathrm{PROP} \to 2^W$. Pair \( M = (F, V) \) is called a \textit{Kripke model}. The truth of a formula is defined by induction, similarly to the topological semantics. The only difference in the definition is:
	\begin{align*}
		M, x \models \Box_i A &\iff \forall y (xR_i y \Rightarrow M,y \models A).
	\end{align*}

    A formula $ A $ is \emph{true} in a model $ M $ (denoted $M \models A$) if $ \forall x\in W\; (M,x \models A)$.
	
    A formula $ A $ is \emph{valid} in a frame $ F $ (denoted $F \models A$) if $ \forall V \;((F,V) \models A)$.

    A formula $ A $ is \emph{valid} in a class of frames $ \mathcal{C} $ (denoted $\mathcal{C} \models A$) if \\ $ \forall F \in \mathcal{C}\; (F \models A)$.
    
    Let $\logic{L}$ be a logic. Frame $F$ is called \emph{$\logic{L}$-frame} if $F\models \logic{L}$.
\end{definition}

\begin{definition}
The \emph{logic of a class of Kripke frames $\mathcal{C}$} is defined as
\[
\Log(\mathcal{C}) = \{ A \mid \forall F \in \mathcal{C} \, (F \models A) \}.
\]
\end{definition}

For a singleton $\mathcal{C}$, we omit the curly brackets (for instance, we write $Log(F)$).

The soundness theorem for Kripke frames can be found in \cite{chagrov_zakharyaschev}.

A logic is \emph{Kripke complete} if it is the logic of some class of Kripke frames.
\bigskip

    For any reflexive and transitive Kripke frame $F=(W,R)$, we can define a topological space $\X = (W, T_R)$, 
    where $T_R$ is the topology with the base $B_R = \{R(x) \mid x \in W\}$. The set $B_R$ is, in fact, a base, since $\displaystyle W = \bigcup_{x \in W} R(x)$ and, for $z \in R(x) \cap R(y)$, we have $R(z) \subseteq R(x) \cap R(y)$ due to transitivity of $R$. We denote the resulting topological space by $\Top(F)$. Similarly, for a reflexive and transitive Kripke frame with two relations $F = (W, R_1, R_2)$ we denote $\Top_2(F) = (W, T_{R_1}, T_{R_2})$.

The following theorem is well known (cf. \cite{gabelaia_modal_def_2001})
\begin{theorem}
     For a reflexive and transitive Kripke frame $F=(W,R)$,  $Log(F) = Log(Top(F))$.
\end{theorem}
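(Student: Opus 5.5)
The plan is to show that the models on $F$ and on $\Top(F)$ are literally the same and satisfy the same formulas, so the two logics coincide. Both $F$ and $\Top(F)$ have the carrier $W$, hence exactly the same valuations $V:\mathrm{PROP}\to 2^W$. I will fix an arbitrary $V$ and prove, by induction on the formula $A$, that for every $x\in W$
\[
(F,V),x\models A \iff (\Top(F),V),x\models A .
\]
Once this holds, $A$ is true in $(F,V)$ exactly when it is true in $(\Top(F),V)$. Quantifying over all $V$ then gives $F\models A \iff \Top(F)\models A$, which is $Log(F)=Log(\Top(F))$.

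The base cases (variables and $\bot$) and the step for $\to$ are immediate, because both truth definitions have identical clauses for them. All the content is in the $\Box$ step.

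For the $\Box$ step, assume the induction hypothesis for $A$. Then the set $S=\{y \mid y\models A\}$ is the same in both models.

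First suppose $(F,V),x\models\Box A$, i.e. $R(x)\subseteq S$. The set $R(x)$ lies in the base $B_R$, so it is open in $T_R$. By reflexivity it contains $x$. So $R(x)$ is an open neighbourhood of $x$ contained in $S$, and hence $(\Top(F),V),x\models\Box A$.

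Conversely, suppose some open $U\in T_R$ satisfies $x\in U\subseteq S$. Since $U$ is a union of base sets, $x\in R(z)\subseteq U$ for some $z$. By transitivity, $xRy$ implies $zRy$, so $R(x)\subseteq R(z)\subseteq U\subseteq S$. Therefore $(F,V),x\models\Box A$.

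The only delicate point is this converse direction: an arbitrary open neighbourhood of $x$ must contain $R(x)$, i.e. $R(x)$ is the least open neighbourhood of $x$. That is exactly where transitivity is used, while reflexivity is used in the first direction to put $x$ inside $R(x)$. Everything else is routine bookkeeping.
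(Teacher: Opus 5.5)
Your proof is correct. The paper gives no proof of its own (it cites the result as well known, from Gabelaia), and your argument is the standard one behind that citation. It rests on exactly the fact the paper records in its Remark on Alexandrov topologies: $R(x)$ is the least open neighbourhood of $x$ in $T_R$, with reflexivity placing $x$ in it and transitivity placing it inside every open set containing $x$. Hence the Kripke and topological clauses for $\Box$ agree pointwise under every valuation.
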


\begin{corollary}
	For a reflexive and transitive Kripke frame $F = (W, R_1, R_2)$, $Log(F) = Log(Top_2(F))$.
\end{corollary}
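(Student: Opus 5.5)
The strategy is to show directly that Kripke truth in $F=(W,R_1,R_2)$ coincides with topological truth in $\Top_2(F)=(W,T_{R_1},T_{R_2})$ under the same valuation. Equality of the logics then follows at once, since both structures have the same underlying set and therefore the same valuations. This argument does not really need the one-dimensional theorem. It just repeats its proof once for each modality, and the two modalities do not interact.

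\textbf{Step 1 (local equivalence).} For a reflexive and transitive relation $R$ on $W$ and any $S\subseteq W$, I will show that
\[
\exists U\in T_R\,(x\in U\subseteq S) \iff R(x)\subseteq S .
\]
For ($\Leftarrow$), take $U=R(x)$. This set lies in the base $B_R$, so it is open, and $x\in R(x)$ by reflexivity. For ($\Rightarrow$), suppose $x\in U\subseteq S$ with $U$ open. Since $U$ is a union of base sets, $x\in R(y)\subseteq U$ for some $y$. Then $yRx$, so by transitivity $R(x)\subseteq R(y)\subseteq S$.

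\textbf{Step 2 (induction).} Fix a valuation $V$ on $W$. I will prove by induction on $A\in\MLog_2$ that $(F,V),x\models A$ iff $(\Top_2(F),V),x\models A$ for every $x$. The cases of variables, $\bot$ and $\to$ are immediate, because both truth definitions agree on them. For $\Box_iA$, let $S=\{y \mid y\models A\}$; by the induction hypothesis this set is the same in both models. Kripke truth of $\Box_iA$ at $x$ says $R_i(x)\subseteq S$. Topological truth says there is an open $U\in T_{R_i}$ with $x\in U\subseteq S$. Step 1, applied to $R=R_i$, shows these two conditions are equivalent.

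\textbf{Step 3.} Valuations on $F$ and on $\Top_2(F)$ are literally the same functions $\mathrm{PROP}\to 2^W$. So a formula is valid in $F$ iff it is valid in $\Top_2(F)$, which gives $Log(F)=Log(\Top_2(F))$.

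The only point needing care is the ($\Rightarrow$) direction of Step 1, where both reflexivity and transitivity are used. Here one must use that every open set is a union of sets $R_i(y)$, and not assume that $R_i(x)$ itself is contained in $U$ without justification. Everything else is routine.
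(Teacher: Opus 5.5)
Your proof is correct and is essentially the argument the paper relies on. The paper simply presents the statement as a corollary of the cited unimodal theorem $\Log(F)=\Log(\Top(F))$. Your Step 1 is exactly the fact, recorded in the paper's subsequent remark, that $R_i(x)$ is the least $T_{R_i}$-open neighbourhood of $x$, so Kripke and topological truth agree pointwise under every valuation. Your explicit induction over the bimodal language is the right way to make that ``corollary'' precise: what transfers to two modalities is this pointwise agreement, not the mere equality of the two unimodal logics.
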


\begin{remark}
    Every point in the space $ \Top(F) $ has its minimal neighborhood. Such topologies are called \emph{Alexandrov topologies}. For an Alexandrov topology $T$ on $W$, there exists a unique reflexive and transitive Kripke frame such that $R(x)$ is the minimal neighborhood of $x$. Thus, there is a one-to-one correspondence 
    between reflexive and transitive Kripke frames and spaces with Alexandrov topology.
\end{remark}

A crucial tool for proving completeness results is a p-morphism:

\begin{definition}
	Let $F = (W, R_1, \ldots, R_N)$ and $G = (U,S_1, \ldots, S_N)$ be Kripke frames. A function $f:W \to U$ is called \emph{p-morphism} if
	\begin{enumerate}
		\item $xR_iy \Rightarrow f(x) S_i f(y)$ (monotonicity);
		\item $f(x) S_i u \Rightarrow \exists y (f(y) = u \;\&\; xR_iy)$ (lifting);
		\item $f$ is a surjection.
	\end{enumerate}
	
	We denote it as $f: F \twoheadrightarrow G$ and we write $F \twoheadrightarrow G$ if there exists a p-morphism from $F$ to $G$.
\end{definition}

The following lemma is well-known (see \cite{chagrov_zakharyaschev}).
\begin{lemma}[p-morphism lemma]\label{thm:pmorphism}
	If $F \twoheadrightarrow G$, then $Log(F) \subseteq Log(G).$
\end{lemma}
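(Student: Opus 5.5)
The plan is to prove the contrapositive: if $F \twoheadrightarrow G$ via some $f$ and a formula $A$ is refuted in $G$, then $A$ is refuted in $F$. This gives $Log(F) \subseteq Log(G)$ at once. So suppose $(G,V)$ is a model on $G$ and $u \in U$ is a point with $(G,V),u \not\models A$. Pull the valuation back along $f$ by setting $V'(p) = f^{-1}(V(p))$ for every $p \in \mathrm{PROP}$, and let $M' = (F, V')$ and $M = (G,V)$.

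The central step is the truth-preservation claim: for every formula $B \in \MLog_N$ and every $x \in W$,
\[
M', x \models B \iff M, f(x) \models B .
\]
This is proved by induction on the construction of $B$. The case of variables holds by the definition of $V'$. The cases of $\bot$ and $\to$ are immediate from the induction hypothesis.

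The case $B = \Box_i C$ needs both directions.

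\emph{($\Leftarrow$)} Assume $M, f(x) \models \Box_i C$ and let $x R_i y$. By monotonicity, $f(x) S_i f(y)$, so $M, f(y) \models C$. The induction hypothesis then gives $M', y \models C$.

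\emph{($\Rightarrow$)} Assume $M', x \models \Box_i C$ and let $f(x) S_i v$. By lifting, there is $y$ with $x R_i y$ and $f(y) = v$. Then $M', y \models C$, and the induction hypothesis gives $M, v \models C$.

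To finish, use surjectivity of $f$ to choose $x$ with $f(x) = u$. The claim then gives $M', x \not\models A$, so $F \not\models A$.

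None of these steps is a real obstacle. The only place needing care is matching each p-morphism condition to the right direction of the modal step: monotonicity handles one direction, lifting handles the other, and surjectivity is needed only to transfer the refuting point.
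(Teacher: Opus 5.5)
Your proof is correct: pulling back the valuation, using monotonicity for one direction of the $\Box_i$ step and lifting for the other, and then using surjectivity to transfer the refuting point is the standard argument. The paper gives no proof of its own; it only cites the Chagrov--Zakharyaschev textbook, where this same argument appears.
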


\begin{definition}
	Let $\X$ and $\Y$ be two topological spaces. Then a function $f:\X \to \Y$ is called \emph{open} (\emph{continuous}), if the image (preimage) of every open set is open.
\end{definition}

The analogue of a p-morphism for topological spaces is a surjective, open, and continuous map. For such a map, an analogue of Lemma \ref{thm:pmorphism} holds (see, e.g., \cite{gabelaia_modal_def_2001}):
\begin{lemma}\label{thm:top_pmorphism}
	Let $\X$ and $\Y$ be two topological spaces and let $f: \X\to \Y$ be a surjective, open, and continuous mapping. Then $Log(\X) \subseteq Log(\Y).$
\end{lemma}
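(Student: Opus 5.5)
The plan is to argue by contraposition: take a formula $A \notin Log(\Y)$ and show that $A \notin Log(\X)$. So fix a valuation $V$ on $\Y$ and a point $y_0$ with $(\Y,V), y_0 \not\models A$. Define a valuation on $\X$ by pulling back along $f$, namely $V'(p) = f^{-1}(V(p))$. The core step is the claim that for every formula $B$ and every $x \in \X$,
\[
(\X,V'), x \models B \iff (\Y,V), f(x) \models B .
\]
Once this holds, surjectivity of $f$ gives a point $x_0$ with $f(x_0)=y_0$. Then $(\X,V'),x_0 \not\models A$, so $A \notin Log(\X)$.

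The claim is proved by induction on $B$. The variable case is just the definition of $V'$, and the cases $\bot$ and $\to$ are immediate. Everything rests on the $\Box$ case. It is convenient to restate the claim set-theoretically as $\|B\|_{\X} = f^{-1}(\|B\|_{\Y})$. In topological semantics, $\|\Box B\|$ is the interior of $\|B\|$. So, using the induction hypothesis, it suffices to show $\mathrm{Int}(f^{-1}(S)) = f^{-1}(\mathrm{Int}(S))$ for every $S \subseteq \Y$.

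The two inclusions use the two hypotheses on $f$ separately.

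For $\supseteq$: $f^{-1}(\mathrm{Int} S)$ is open by continuity and is contained in $f^{-1}(S)$. Hence it lies inside the interior of $f^{-1}(S)$.

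For $\subseteq$: take $x$ and an open $U \ni x$ with $U \subseteq f^{-1}(S)$. Then $f(U)$ is open by openness of $f$. It contains $f(x)$, and it is contained in $S$. Hence $f(x) \in \mathrm{Int} S$.

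This $\Box$ step is the only nontrivial point, and it is exactly where openness and continuity each enter. Surjectivity is used only at the very end to locate $x_0$.
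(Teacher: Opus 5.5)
Your proof is correct. The paper does not prove this lemma itself; it cites \cite{gabelaia_modal_def_2001}, and the standard argument there is the one you give: pull the valuation back along $f$, use $\mathrm{Int}(f^{-1}(S)) = f^{-1}(\mathrm{Int}\, S)$ (continuity for one inclusion, openness for the other), and use surjectivity only to reach the refuting point. The same induction carries over verbatim to spaces with several topologies, handling each $\Box_i$ separately, and that bimodal form is what the paper actually needs for $f:\X\times\X\to\Top_2(\mathbb{\widetilde{T}})$.
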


We also call such mappings \emph{p-morphisms}. This causes no confusion, since the following holds:
\begin{lemma}
	For any two Kripke frames $ F_i = (W_i, R_i) $, where $ R_i $ is reflexive and transitive ($ i = 1,2 $), a surjective function $ f:W_1 \to W_2 $ is a p-morphism if and only if $ f $ is continuous and open with respect to the topologies $ T_{R_1}  $ and $ T_{R_2}$.
\end{lemma}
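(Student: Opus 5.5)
We need to prove the lemma: for reflexive transitive frames $F_1=(W_1,R_1)$ and $F_2=(W_2,R_2)$ and a surjection $f:W_1\to W_2$, $f$ is a p-morphism if and only if it is continuous and open with respect to $T_{R_1}$ and $T_{R_2}$. Surjectivity is common to both sides, so it suffices to show two things:
\begin{itemize}
\item continuity is equivalent to monotonicity;
\item openness is equivalent to lifting.
\end{itemize}
The key tool is the description of the Alexandrov topology $T_R$ from the Remark: a set $U$ is open iff it is $R$-upward closed, i.e. $x\in U$ and $xRy$ imply $y\in U$. Moreover, $R(x)$ is the least open set containing $x$.

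We first check this description. Each basic set $R(x)$ is upward closed by transitivity, and unions of upward closed sets are upward closed. Conversely, an upward closed $U$ equals $\bigcup_{x\in U}R(x)$ by reflexivity.

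\emph{Monotonicity $\Leftrightarrow$ continuity.}
\begin{itemize}
\item Assume monotonicity and let $V\subseteq W_2$ be upward closed. If $x\in f^{-1}(V)$ and $xR_1y$, then $f(x)R_2f(y)$, so $f(y)\in V$. Hence $f^{-1}(V)$ is open.
\item Assume continuity and let $xR_1y$. The set $f^{-1}(R_2(f(x)))$ is open and contains $x$. Since $R_1(x)$ is the least open set containing $x$, we get $R_1(x)\subseteq f^{-1}(R_2(f(x)))$. Therefore $y$ lies in this preimage, i.e. $f(x)R_2f(y)$.
\end{itemize}

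\emph{Lifting $\Leftrightarrow$ openness.}
\begin{itemize}
\item Assume lifting. Since images commute with unions, it suffices to show that $f(R_1(x))$ is open for each $x$. Let $u\in f(R_1(x))$ and $uR_2v$; say $u=f(y)$ with $xR_1y$. Lifting gives $z$ with $yR_1z$ and $f(z)=v$. By transitivity $xR_1z$, so $v\in f(R_1(x))$. Hence $f(R_1(x))$ is upward closed.
\item Assume openness and let $f(x)R_2u$. The set $f(R_1(x))$ is open and contains $f(x)$, so it contains the least open set $R_2(f(x))$ around $f(x)$. Thus $u\in f(R_1(x))$, i.e. $u=f(y)$ for some $y$ with $xR_1y$, which is exactly the lifting condition.
\end{itemize}

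None of these steps is a real obstacle. The only point needing care is the use of the characterization of open sets as upward closed sets together with minimal neighbourhoods. This uses reflexivity (so that $x\in R(x)$) and transitivity (so that $R(x)$ is upward closed).
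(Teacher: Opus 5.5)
Your proof is correct and follows essentially the same route as the paper: continuity is matched with monotonicity and openness with lifting, and the converse directions come from $R_i(x)$ being the least open neighbourhood of $x$. The only difference is cosmetic: you describe open sets as $R$-upward closed sets, whereas the paper checks the conditions on basic sets $R_i(x)$, using surjectivity to write every basic set of $T_{R_2}$ as $R_2(f(x))$.
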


\begin{proof}
    \begin{enumerate}
        \item Let $f:F_1 \twoheadrightarrow F_2$. 
        It is sufficient to check openness and continuity on the open sets from a base of the topology.
        
        \textbf{(Continuity)}. We need to show that $f^{-1}(R_2(f(x)))$ is open in $T_{R_1}$. Take \hbox{$y\in f^{-1}(R_2(f(x)))$}. Then $f(y) \in R_2(f(x))$. Since $R_2$ is transitive, this implies 
        $$
        R_2(f(y)) \subseteq R_2(f(x)).
        $$ 
        By monotonicity of $f$, we have 
        $$
        f(R_1(y)) \subseteq R_2 (f(y)).
        $$
        Combining these gives
        \[ 
        f(R_1(y)) \subseteq R_2(f(x)),
        \]
        which is exactly equivalent to
        $$
        R_1(y) \subseteq f^{-1}(R_2 (f(x))).
        $$ 
        

        \textbf{(Openness)} Now we prove that $f(R_1(x))$ is open. Let $w \in f(R_1(x))$, i.e. $w = f(y)$ for some $y\in R_1(x)$. Let us prove that $R_2(w) \subseteq f(R_1(x))$. For $t \in R_2(w)$, by the lifting property there exists $z \in R_1(y)\; f(z) = t$. Then $t \in f(R_1(x))$. Hence, $f$ is open. 
    
        \item Let $f:W_1\to W_2$ be an open and continuous surjection. 
        
        Take any point $x \in W_1$ and its minimal open neighborhood $R_1(x)$. By openness of $f$ we have that $f(R_1(x))$ is open and contain the minimal open neighborhood of $f(x)$, i.e.
        \[ 
        R_2(f(x)) \subseteq f(R_1(x)).
        \]
        This yields the lifting property for $f$.
  
        
        Now we take the minimal neighborhood $R_2(f(x))$ for any $x\in W_1$. Since $f$ is continuous $f^{-1}(R_2(f(x)))$ is open and
        \[ 
        R_1(x) \subseteq f^{-1}(R_2(f(x))).
         \] 
        It follows that
        \[ 
        f(R_1(x)) \subseteq R_2(f(x)).
        \] 
        This yields the monotonicity for $f$.

    \end{enumerate}
\end{proof}

There are several ways of combining modal logics. The simplest way is the fusion: 

\begin{definition}
	Let $\logic{L_1}$ and $\logic{L_2}$ be two modal logics with one modality (unimodal logics). Then a \emph{fusion} of these logics is
	\[
	\logic{L_1} \ast \logic{L_2} = \logic{K_2} + \logic{L'_1} + \logic{L'_2};
	\] 
	where $\logic{L'_i}$ is the set of all formulas obtained  from $\logic{L_i}$ by replacing $\Box$ with $\Box_i$.
\end{definition}

The other is the product:

\begin{definition}(\cite{gabbay_shehtman_products})
    Let $F_i = (W_i, R_i)$ ($i = 1, 2$) be two Kripke frames. We define their \emph{product} as the frame with two relations $F_1 \times F_2 = (W_1 \times W_2, R_1', R_2')$, where
    
    \[
    (x, y)R_1' (z, t) \iff xR_1 z \ \&\ y = t,
    \]
    \[
    (x, y)R_2' (z, t) \iff x = z \ \&\ yR_2 t.
    \]
\end{definition}

\begin{definition}
    A \emph{product of two modal logics}  $ L_1 $ and $ L_2 $ is the following logic with two modalities:
\begin{align*}
	L_1 \times L_2 = Log(\setdef[F_1 \times F_2]{ F_1, F_2\hbox{ are Kripke frames,}\ F_1 \models L_1, F_2 \models L_2}).
\end{align*}
\end{definition}

\begin{theorem}[\cite{gabbay_shehtman_products}]
    For any two logics $\logic{L_1}$ and $\logic{L_2}$, we have
    \[
    \logic{L_1} \ast \logic{L_2} + com_{12} + com_{21} + chr \subseteq \logic{L_1} \times \logic{L_2},
    \]
    where
    \begin{align*}
        com_{12} = \Box_1\Box_2 p \to \Box_2\Box_1 p, \\
        com_{21} = \Box_2\Box_1 p \to \Box_1\Box_2 p, \\
        chr = \romb_1 \Box_2 p \to \Box_2 \romb_1 p.
    \end{align*}

\end{theorem}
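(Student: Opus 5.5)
Since $\logic{L_1} \times \logic{L_2}$ is defined as the logic of a class of frames, it is closed under substitution, modus ponens and necessitation for both boxes, and it contains all classical tautologies and both normality axioms, because every Kripke frame validates them. So it is a bimodal logic. The fusion $\logic{L_1} \ast \logic{L_2} + com_{12} + com_{21} + chr$ is the \emph{smallest} logic containing $\logic{L'_1}$, $\logic{L'_2}$ and the three formulas. Hence it suffices to show that each of these generating formulas is valid in every product frame $F_1 \times F_2$ with $F_1 \models \logic{L_1}$ and $F_2 \models \logic{L_2}$.

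First, $\logic{L'_1}$ is valid in $F_1 \times F_2$. Take $A \in \logic{L_1}$ and write $A'$ for its copy with $\Box_1$. Fix a valuation $V$ on $F_1 \times F_2$ and a second coordinate $y \in W_2$. Define the valuation $V_y(p) = \setdef[x]{(x,y) \in V(p)}$ on $F_1$. By induction on formulas $B$ built with $\Box_1$ only, we get $(x,y) \models B$ iff $x \models B$ under $V_y$. The only nontrivial case is $\Box_1$, and it works because the $R_1'$-successors of $(x,y)$ are exactly the pairs $(z,y)$ with $xR_1z$. Since $F_1 \models A$, every $(x,y)$ satisfies $A'$. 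Equivalently, each horizontal slice is a generated subframe isomorphic to $F_1$. The argument for $\logic{L'_2}$ is symmetric, using vertical slices.

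Next come the three axioms, by direct semantic checks. For $com_{12}$, assume $(x,y) \models \Box_1\Box_2 p$ and $(x,y) R_2' (x,t) R_1' (z,t)$, so $yR_2t$ and $xR_1z$. Then $(x,y) R_1' (z,y) R_2' (z,t)$, so $(z,t) \models p$. The same rectangle argument gives $com_{21}$. For $chr$, assume $(x,y) \models \romb_1\Box_2 p$. Then there is $z$ with $xR_1z$ and $(z,y) \models \Box_2 p$. Now take any $t$ with $yR_2t$. We have $(x,t) R_1' (z,t)$, and $(z,t) \models p$ because $(z,y) R_2' (z,t)$. Hence $(x,t) \models \romb_1 p$, which shows $(x,y) \models \Box_2\romb_1 p$.

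The only step needing care is the slicing induction for $\logic{L'_i}$: the valuation must be restricted to the slice, and the frame condition must be used exactly for formulas in one modality. The commutativity and Church–Rosser checks are routine once the rectangle of points $(x,y),(z,y),(x,t),(z,t)$ is drawn.
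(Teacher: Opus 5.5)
Your proof is correct, and since the paper only quotes this result from Gabbay and Shehtman without proving it, the comparison is with their standard argument, which is the one you give: the valuation restricted to each slice shows $\logic{L'_i}$ holds in $F_1\times F_2$ (equivalently, each $R_i'$-reduct is a disjoint union of copies of $F_i$), and $com_{12}$, $com_{21}$, $chr$ are checked directly on the rectangle $(x,y),(z,y),(x,t),(z,t)$. One phrase should be tightened: $W_1\times\{y\}$ is a generated subframe only of the reduct $(W_1\times W_2, R_1')$, not of the bimodal frame, but your induction over formulas containing only $\Box_1$ relies on exactly that and nothing more.
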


\begin{remark}
    The inclusion may be strict. An example can be found in \cite{gabbay_shehtman_products}.
\end{remark}

For the logic $\logic{S4}$, we have the following.

\begin{theorem}[\cite{gabbay_shehtman_products}]
    \begin{align*}
        \logic{S4} \ast \logic{S4} + com_{12} + com_{21} + chr = \logic{S4} \times \logic{S4}.
    \end{align*}
\end{theorem}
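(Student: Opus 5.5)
We need to show $\logic{S4} \ast \logic{S4} + com_{12} + com_{21} + chr = \logic{S4} \times \logic{S4}$. The inclusion $\subseteq$ is the preceding theorem applied to $\logic{L_1}=\logic{L_2}=\logic{S4}$. So the whole content is the converse: every formula valid in all products of reflexive transitive frames is derivable in $\logic{L} := \logic{S4} \ast \logic{S4} + com_{12} + com_{21} + chr$. The plan is a canonical model argument followed by a ``product-making'' unravelling and a p-morphism back, using Lemma \ref{thm:pmorphism}.

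\textbf{Step 1 (canonical frame).} All axioms of $\logic{L}$ are Sahlqvist formulas, so $\logic{L}$ is canonical. Hence it is complete for its canonical frame $F_{\logic{L}} = (W, R_1, R_2)$. In this frame $R_1, R_2$ are reflexive and transitive, and they commute: $R_1 \circ R_2 = R_2 \circ R_1$ by $com_{12}, com_{21}$. Moreover, $chr$ gives the Church--Rosser property: if $xR_1y$ and $xR_2z$, then some $u$ has $yR_2u$ and $zR_1u$. Thus every non-theorem $A$ of $\logic{L}$ is refuted at a point of a rooted, commuting, Church--Rosser $\logic{S4}$-frame (take the cone generated by the refuting point).

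\textbf{Step 2 (product representation).} It remains to show that every rooted commuting, Church--Rosser $\logic{S4}$ 2-frame $G=(U,S_1,S_2)$ is a p-morphic image of a product $F_1 \times F_2$ of $\logic{S4}$-frames. Then Lemma \ref{thm:pmorphism} gives $Log(F_1\times F_2)\subseteq Log(G)$, so $A$ fails in a product frame, which finishes the proof.

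I would build the product by a step-by-step (``rectangle completion'') construction. We maintain a partial map $f$ from a grid $X_n \times Y_n$ into $U$, where $X_n, Y_n$ are finite preorders (trees with reflexive-transitive closure) and the root pair is sent to the root of $G$. The map is required to be monotone along both coordinates. At each stage we repair one defect:
\begin{itemize}[label={}]
\item a \emph{lifting defect}: a pair $(x,y)$ with $f(x,y)S_1u$ but no $x'$ above $x$ with $f(x',y)=u$;
\item a \emph{totality defect}: a new pair of the grid created by an earlier extension that has no value yet.
\end{itemize}
To fix a horizontal lifting defect, we add a new point $x'$ above $x$ in the first coordinate and must define $f(x',y')$ for every $y'$ in the second coordinate. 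This is exactly where commutativity and Church--Rosser are used. For $y'$ with $yS$-above relation, we use commutativity ($R_2\circ R_1\subseteq R_1\circ R_2$) to find values making the new column monotone. For $y'$ below or incomparable to $y$, we use Church--Rosser (the $chr$ confluence) to close the squares, propagating from the root along the tree structure of $Y_n$. Keeping $X_n,Y_n$ trees makes this propagation well defined, because each point has a unique path from the root. Taking the union over a fair enumeration of defects yields $f:F_1\times F_2\to G$ that is monotone, has lifting for both relations, and is surjective since $G$ is rooted.

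\textbf{Main obstacle.} The hard part is the square-filling in Step 2: when a new column is added, the values along it must be chosen coherently so that monotonicity holds for both relations simultaneously. This has to work not just for one square but for all points of the existing (possibly infinite after limits) second coordinate.

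First I would try the standard trick of \cite{gabbay_shehtman_products}. Work with ``intransitive'' tree unravellings, so each new point has a single predecessor and only one square needs closing at a time, using commutativity and Church--Rosser. Then take the reflexive-transitive closures $F_1, F_2$ at the end. Monotonicity of $f$ for the closures follows from transitivity of $S_i$, and lifting is preserved.
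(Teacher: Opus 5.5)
The paper gives no proof of this theorem; it only quotes it from \cite{gabbay_shehtman_products}. Your overall route is the standard argument from that source for Horn-axiomatizable logics: Sahlqvist canonicity of the commutator, then a step-by-step representation of every rooted, reflexive, transitive, commuting, Church--Rosser 2-frame as a p-morphic image of a product of closures of trees. Step 1, the final closure step and the surjectivity argument are fine.

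However, in the column-filling step, which you rightly call the heart of the matter, you have the two frame conditions the wrong way round. The new column $f(x',\cdot)$ must be defined along the unique undirected tree path in $Y_n$ starting at the defect point $y$, where $f(x',y)=u$ is prescribed.
\begin{itemize}
\item Passing from a node $y_2$ down to its parent $y_1$: you know $f(x,y_1)\,S_2\,f(x,y_2)\,S_1\,f(x',y_2)$ and need $w$ with $f(x,y_1)\,S_1\,w\,S_2\,f(x',y_2)$. This is the correspondent of $com_{12}$. $chr$ cannot supply it, because it closes forks, not two-step paths.
\item Passing from $y_1$ up to a child $y_2$: you know $f(x,y_1)\,S_1\,f(x',y_1)$ and $f(x,y_1)\,S_2\,f(x,y_2)$. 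Here it is $chr$ that gives $w$ with $f(x',y_1)\,S_2\,w$ and $f(x,y_2)\,S_1\,w$.
\end{itemize}
In particular you cannot simply ``propagate from the root'' with Church--Rosser. That produces some value at $y$, but not necessarily the prescribed $u$, so the defect is not repaired. The correct procedure is to descend from $y$ to the root using $com_{12}$, then climb into all side branches using $chr$. For new rows, use $com_{21}$ and the same $chr$. Uniqueness of tree paths makes this well defined, and monotonicity for the closures follows from reflexivity and transitivity of $S_1,S_2$.

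There is a second gap in the bookkeeping. An $\omega$-length construction repairing one defect per step under a ``fair enumeration'' presupposes countably many defects. But the canonical frame has continuum many points, and a point can have uncountably many $S_1$-successors, so the resulting union need not have the lifting property. There are two ways to fix this.
\begin{itemize}
\item First pass to a countable rooted frame by downward L\"owenheim--Skolem; the frame conditions and the standard translation of $\neg A$ are first-order.
\item Alternatively, repair all current defects in each round. New columns attached to points of $X_n$ interact only with the columns of their parents, so they can be defined independently; then do the same for rows. Each defect present at the start of a round is repaired during that round, whatever the cardinality.
\end{itemize}
Also, since each repair fills a whole new column, your ``totality defects'' never arise. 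With these two corrections the argument goes through.
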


The notion of product for modal logics depends on semantics. Thus, a topological product of modal logics can be defined.

\begin{definition}(\cite{van_benthem_multimodal})
	Let $ \X_1 = (X_1, T_1)$ and $ \X_2 = (X_2, T_2) $ be two topological spaces. We define their \emph{bitopological product} as the bitopological space  $ \X_1 \times \X_2 = (X_1 \times X_2, T_1^h, T_2^v )$ where the topology $ T_1^h $ has a base $ \setdef[U\times \set{x_2}]{U \in T_1\;\&\; x_2\in X_2} $ and the topology $ T_2^v $ has a base $ \setdef[\set{x_1}\times U]{x_1\in X_1 \;\&\;  U \in T_2} $. The topology $ T_1^h$ is called \emph{horizontal} and the topology $T_2^v$ is called \emph{vertical}.
\end{definition}

Such a product of topological spaces is similar to the product of Kripke frames. In fact this notion generalizes the Kripke product for reflexive and transitive frames:

\begin{lemma}
For any two Kripke reflexive and transitive frames $F_1$ and $F_2$, we have 
    $\Log(\Top(F_1) \times \Top(F_2)) = \Log(\Top_2(F_1 \times F_2)) =\Log(F_1 \times F_2)$.
\end{lemma}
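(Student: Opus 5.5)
The plan is to prove the stronger claim that the two bitopological spaces $\Top(F_1) \times \Top(F_2)$ and $\Top_2(F_1 \times F_2)$ coincide. They have the same underlying set $W_1 \times W_2$, and equal spaces validate the same formulas. The second equality, $\Log(\Top_2(F_1 \times F_2)) = \Log(F_1 \times F_2)$, is then immediate from the Corollary to the theorem $Log(F) = Log(Top(F))$. That corollary applies because $F_1 \times F_2$ is a reflexive and transitive frame with two relations. Indeed, $R_1'$ is reflexive and transitive whenever $R_1$ is: the second coordinate is preserved, and the first moves along $R_1$. The same argument works for $R_2'$.

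So the work reduces to showing $T_1^h = T_{R_1'}$ and $T_2^v = T_{R_2'}$. By symmetry it suffices to treat the horizontal topology. First I compute the minimal neighborhoods in $F_1 \times F_2$:
\[
R_1'((x,y)) = R_1(x) \times \set{y}.
\]
So the base $B_{R_1'}$ consists exactly of the sets $R_1(x)\times\set{y}$.

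The two inclusions between the topologies go as follows.

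\textbf{($T_{R_1'} \subseteq T_1^h$).} Each $R_1(x)$ is open in $\Top(F_1)$, so each basic set $R_1(x)\times\set{y}$ lies in the base of $T_1^h$.

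\textbf{($T_1^h \subseteq T_{R_1'}$).} Take a basic set $U \times \set{y}$ of $T_1^h$ with $U \in T_{R_1}$. Since $U$ is open in an Alexandrov topology, $U = \bigcup_{x\in U} R_1(x)$. Hence
\[
U\times\set{y} = \bigcup_{x \in U} R_1(x)\times\set{y},
\]
which is a union of elements of $B_{R_1'}$.

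Since each base is contained in the other's topology, the topologies are equal. The vertical case is identical with the coordinates swapped.

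I do not expect a real obstacle. The only point needing care is justifying $U = \bigcup_{x\in U} R_1(x)$ for open $U$. For $x \in U$, the set $U$ is a union of basic sets $R_1(z)$, so $x \in R_1(z) \subseteq U$ for some $z$. Transitivity then gives $R_1(x) \subseteq R_1(z) \subseteq U$, and reflexivity gives $x \in R_1(x)$.
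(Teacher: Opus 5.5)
The paper states this lemma without proof. It presents it only as the remark that the bitopological product generalizes the Kripke product of reflexive transitive frames, so there is no written argument to compare yours against.

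Your proof is correct and supplies the natural justification. You establish the stronger fact that $\Top(F_1)\times\Top(F_2)$ and $\Top_2(F_1\times F_2)$ are literally the same bitopological space. This rests on $R_1'((x,y)) = R_1(x)\times\set{y}$ and on the fact that open sets of an Alexandrov topology are unions of minimal neighborhoods, which you justify correctly. The second equality then follows from the paper's Corollary applied to the reflexive transitive frame $F_1\times F_2$.
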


\begin{definition}
    A \emph{topological product of modal logics}  $ L_1 $ and $ L_2 $ is the following logic with two modalities:
\begin{align*}
	&\logic{L_1} \times_t \logic{L_2} = Log(\setdef[\X_1 \times \X_2]{ \X_1, \X_2\hbox{ are topological spaces,}\ \X_1 \models \logic{L_1}, \X_2 \models \logic{L_2}}).
\end{align*}
\end{definition}

The topological product of modal logics does not generally coincide with the usual product, because there exist topological spaces in which points do not have minimal neighborhoods. However, it is known that it lies between the fusion and the usual product:

\begin{theorem}[\cite{kremer_topological_product_s4_s5}]
    Let $\logic{L_1}$ and $\logic{L_2}$ be two $\logic{S4}$-logics. Then
    \[
    \logic{L_1} \ast \logic{L_2} \subseteq \logic{L_1} \times_t \logic{L_2} \subseteq \logic{L_1} \times \logic{L_2}.
    \]
\end{theorem}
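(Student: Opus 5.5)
The statement has two inclusions, and I would prove them separately.

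\textbf{First inclusion.} For $\logic{L_1} \ast \logic{L_2} \subseteq \logic{L_1} \times_t \logic{L_2}$, it suffices to show that every product space $\X_1 \times \X_2$, with $\X_1 \models \logic{L_1}$ and $\X_2 \models \logic{L_2}$, validates $\logic{L'_1}$ and $\logic{L'_2}$. Validity of a bitopological space is closed under the rules of a normal bimodal logic: the normality axioms hold in every topological semantics, and MP, Sub and Nec preserve validity. So the fusion, as the least logic containing these axioms, is then valid.

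To show $\X_1 \times \X_2 \models \logic{L'_1}$, fix $x_2 \in X_2$ and consider the horizontal slice $X_1 \times \set{x_2}$.
\begin{itemize}
\item It is open in $T_1^h$, and the subspace topology it inherits is a copy of $T_1$ via $x_1 \mapsto (x_1,x_2)$.
\item $T_1^h$ is the disjoint sum of these slices.
\item The truth of $\Box_1$-formulas at a point depends only on the slice containing it, by induction on formulas that use only $\Box_1$.
\end{itemize}
Hence, for any valuation $V$ and any $\logic{L}_1$-formula $A'$, the truth of $A'$ on the slice coincides with the truth of $A$ in $\X_1$ under the induced valuation $V_{x_2}(p)=\setdef[x_1]{(x_1,x_2)\in V(p)}$. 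Since $\X_1 \models A$, we get $\X_1\times\X_2 \models A'$. The vertical case is symmetric.

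\textbf{Second inclusion.} For $\logic{L_1} \times_t \logic{L_2} \subseteq \logic{L_1} \times \logic{L_2}$, suppose $A \notin \logic{L_1}\times\logic{L_2}$. Then $A$ fails on some $F_1 \times F_2$ with $F_i \models \logic{L_i}$. I need a topological product refuting $A$. Since the $\logic{L_i}$ are $\logic{S4}$-logics, the $F_i$ should be reflexive and transitive, so that $\Top(F_i)$ is defined and, by the earlier theorem, $\Log(\Top(F_i)) = \Log(F_i) \supseteq \logic{L_i}$; thus $\Top(F_i)$ is an $\logic{L_i}$-space. The lemma $\Log(\Top(F_1)\times\Top(F_2)) = \Log(F_1\times F_2)$ then shows that $A$ fails in a product of an $\logic{L_1}$-space and an $\logic{L_2}$-space, so $A \notin \logic{L_1}\times_t\logic{L_2}$.

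\textbf{Main obstacle.} An $\logic{L_i}$-frame need not be reflexive and transitive: $\logic{S4}$ is valid only on reflexive transitive frames, but here $F_i$ validates $\logic{L_i}\supseteq\logic{S4}$, so it validates $\Box p\to p$ and $\Box p\to\Box\Box p$. Frame-validity of these axioms forces reflexivity and transitivity by the standard correspondence, so this is fine. What remains to check is that the last lemma needs only reflexive transitive frames, which the previous sentence secures.
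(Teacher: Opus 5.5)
Your proof is correct and is the standard argument. The paper does not prove this theorem itself; it cites Kremer. Your second inclusion uses exactly the two preliminary facts the paper records, $\Log(F)=\Log(\Top(F))$ and $\Log(\Top(F_1)\times\Top(F_2))=\Log(F_1\times F_2)$, and your slice argument for the first inclusion is the usual one.

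The only blemish is the ``Main obstacle'' paragraph. Its first sentence is false as written: frames validating $\logic{L_i}\supseteq\logic{S4}$ are necessarily reflexive and transitive, as you go on to note. Replace that paragraph with the one-line correspondence remark.
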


\section{McKinsey Axiom}

The formula $ A1 = \Box \romb p \to \romb \Box p $ is known as the \emph{McKinsey axiom}. The logic $\logic{S4.1}$ is defined as follows: 
$$
\logic{S4.1} = \logic{S4} + A1.
$$

\begin{lemma}[\cite{chagrov_zakharyaschev}]
The following formulas are equivalent over $\logic{S4}$:
\[
\Box \Diamond p \to \Diamond \Box p, \quad
\Diamond(\Diamond p \to \Box p), \quad
\Diamond(\Box p \vee \Box \neg p).
\]
That is, for any formula $A$, we have
\[
\logic{S4} + (\Box \Diamond A \to \Diamond \Box A)
=
\logic{S4} + \Diamond(\Diamond A \to \Box A)
=
\logic{S4} + \Diamond(\Box A \vee \Box \neg A).
\]
\end{lemma}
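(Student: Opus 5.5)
I will establish the equivalences by purely syntactic derivations in $\logic{S4}$. For each of the three formulas, I show that adding it to $\logic{S4}$ lets us derive the other two, up to substitution. Since the logics are closed under \textsc{Sub}, it suffices to prove each implication for a variable $p$; the version for an arbitrary formula $A$ then follows by substituting $A$ for $p$. Throughout I use the standard $\logic{S4}$ facts: $\Box$ and $\Diamond$ are monotone, $\Box$ distributes over $\wedge$, $\Diamond$ distributes over $\vee$, and $\Box\Box p\leftrightarrow\Box p$ and $\Diamond\Diamond p\leftrightarrow\Diamond p$.

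\textbf{Step 1: the first two formulas.} Note that $\Diamond(\Diamond p\to\Box p)$ is equivalent in $\logic{K}$ to $\Diamond\neg\Diamond p\vee\Diamond\Box p$, because $\Diamond$ distributes over the disjunction $\neg\Diamond p\vee\Box p$. Moreover, $\Diamond\neg\Diamond p$ is equivalent to $\neg\Box\Diamond p$. Hence
\[
\Diamond(\Diamond p\to\Box p)\leftrightarrow(\neg\Box\Diamond p\vee\Diamond\Box p)\leftrightarrow(\Box\Diamond p\to\Diamond\Box p).
\]
This is a $\logic{K}$-theorem, so the two formulas are interderivable over $\logic{S4}$ even without substitution.

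\textbf{Step 2: the third formula implies the first.} Assume $\Diamond(\Box p\vee\Box\neg p)$. This is equivalent to $\Diamond\Box p\vee\Diamond\Box\neg p$. The second disjunct equals $\Diamond\neg\Diamond p$, that is, $\neg\Box\Diamond p$. So we obtain $\Diamond\Box p\vee\neg\Box\Diamond p$, which is exactly $A1$. This direction uses no substitution either.

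\textbf{Step 3: the first formula implies the third.} This is the one step that needs substitution and the $\logic{S4}$ axioms. Starting from the second form, I substitute $\Diamond p$ would be natural, but it is cleaner to proceed directly. By Step 1, the goal $\Diamond\Box p\vee\Diamond\Box\neg p$ is equivalent to $\Diamond\Box p\vee\neg\Box\Diamond p$. The point is that this is literally $\Box\Diamond p\to\Diamond\Box p$ rewritten, since $\Diamond\Box\neg p\leftrightarrow\neg\Box\Diamond p$ holds in $\logic{K}$. So Step 2 is in fact an equivalence, and the apparent obstacle disappears: all three formulas are $\logic{K}$-equivalent modulo the dualities $\Diamond\neg=\neg\Box$ and the distribution of $\Diamond$ over $\vee$.

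I therefore expect no real obstacle. The main care needed is to verify each duality step within $\logic{K}$. The statement for an arbitrary formula $A$ then follows at once by substituting $A$ for $p$, which turns every derived equivalence into an equivalence in $A$. This gives equality of the three extensions of $\logic{S4}$.
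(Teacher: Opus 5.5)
Your argument is correct. The paper gives no proof of this lemma (it only cites Chagrov--Zakharyaschev), so there is no in-paper route to compare with.

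What your computation shows is stronger than the statement: the three formulas are provably equivalent already in $\logic{K}$. First, the inner formulas $\Diamond p\to\Box p$ and $\Box p\vee\Box\neg p$ are $\logic{K}$-equivalent via $\neg\Diamond p\leftrightarrow\Box\neg p$. Second, $\Diamond(\Box p\vee\Box\neg p)\leftrightarrow\Diamond\Box p\vee\Diamond\Box\neg p$, and under the abbreviation $\Diamond=\neg\Box\neg$ the formula $\Diamond\Box\neg p$ is literally $\neg\Box\Diamond p$. So reflexivity, transitivity and substitution are all unnecessary, and the equivalences hold schematically for every formula $A$.

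The one thing to clean up is Step~3. Its opening claim that this direction ``needs substitution and the $\logic{S4}$ axioms'' is wrong, and the garbled sentence about substituting $\Diamond p$ is a false start that you then retract. Every step of Step~2 is already a biconditional, so Step~3 is redundant. Replace it with the remark that Steps~1 and~2 are chains of $\logic{K}$-equivalences, which immediately gives equality of the three extensions of $\logic{S4}$.
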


\begin{lemma}
    In a reflexive frame $F$, the satisfiability of the formula $\romb p \to \Box p$ (or $\Box p \vee \Box \neg p$) at a point $x$ under any valuation is equivalent to the maximality of that point (i.e. $R(x)=\{x\}$).
\end{lemma}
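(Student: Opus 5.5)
The statement should be read as follows: in a reflexive frame $F=(W,R)$, the formula $\romb p \to \Box p$ (equivalently $\Box p \vee \Box \neg p$) is true at $x$ under every valuation if and only if $R(x)=\{x\}$. I will prove the two directions separately, working only with the Kripke truth clauses for $\Box$ and $\romb$. I treat both formulas at once, since $\romb p\to\Box p$ is propositionally $\Box\neg p \vee \Box p$ once $\neg\romb p$ is unfolded to $\Box\neg p$.

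\emph{Maximal points.} Suppose $R(x)=\{x\}$. Then for any formula $A$, $x\models\Box A$ iff $x\models A$, and likewise $x\models\romb A$ iff $x\models A$. So $x\models\romb p\to\Box p$ reduces to $x\models p\to p$, which holds under every valuation. In the same way $\Box p\vee\Box\neg p$ reduces to $p\vee\neg p$.

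\emph{Non-maximal points.} Suppose $R(x)\neq\{x\}$. By reflexivity $x\in R(x)$, so there is some $y\in R(x)$ with $y\neq x$. Choose the valuation $V(p)=\{y\}$. Then $x\models\romb p$, witnessed by $y$. Since $xRx$ and $x\notin V(p)$, we get $x\not\models\Box p$. Hence $x\not\models\romb p\to\Box p$. Equivalently, $x\not\models\Box\neg p$ because of $y$, and $x\not\models\Box p$ because of $x$ itself, so the disjunction fails as well.

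The only point needing care is the use of reflexivity. It guarantees $x\in R(x)$, so that a non-maximal $x$ has a successor distinct from itself and $x$ itself can serve as the counterexample to $\Box p$. Without reflexivity the equivalence fails: a point with no successors satisfies $\Box p$ vacuously. Nothing else in the argument is an obstacle.
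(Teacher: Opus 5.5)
Your proof is correct and follows the paper's argument essentially verbatim: at a maximal point the formula reduces to $p\to p$, and at a non-maximal point the valuation $V(p)=\{y\}$ for a successor $y\neq x$ refutes it. You also make explicit the use of $xRx$ to refute $\Box p$ and treat the $\Box p\vee\Box\neg p$ variant directly, both of which the paper leaves implicit.
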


\begin{proof}
If $R(x)=\{x\}$, then:
\[
F,x \models \Diamond p \iff F,x \models p, \quad
F,x \models \Box p \iff F,x \models p.
\]
Hence $\Diamond p \to \Box p$ is equivalent to $p \to p$, so it holds under any valuation.

\medskip

Assume $R(x) \neq \{x\}$. Due to reflexivity there exists $y \in R(x)$ such that $y \neq x$.

Define a valuation $V$ by:
\[
V(p) = \{y\}.
\]
Then $F,x \models \Diamond p$ (since $y \in R(x)$ and $F,y \models p$) and $F,x \not\models \Box p$ (since $F,x \not\models p$). Thus $F,x \not\models \Diamond p \to \Box p$.
\end{proof}

\begin{lemma}[\cite{chagrov_zakharyaschev}]\label{lem:McKinseyProperty}
	In a $\logic{S4}$-frame $ F = (W,R) $, the validity of the formula $A1$ is equivalent to the following first-order condition:
	\[ 
	\forall w \in W \exists u \in W (w R u \land R(u) = \set{u} ),
	\]
	where $ R(u) = \setdef[t]{uRt} $.

    Informally speaking, this condition means that ``from every point, some maximal point is accessible''.
\end{lemma}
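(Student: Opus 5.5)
The plan is to prove the two directions separately. The direction ``condition $\Rightarrow$ validity'' is a direct semantic check. The direction ``validity $\Rightarrow$ condition'' is by contraposition, building a valuation that refutes $A1$ at a point which sees no maximal point.

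\emph{From the condition to validity.} Fix a valuation and a point $w$ with $F,w\models\Box\romb p$. By the condition there is $u$ with $wRu$ and $R(u)=\set{u}$. Since $wRu$, we get $F,u\models\romb p$. Because $R(u)=\set{u}$, this means $F,u\models p$, and therefore $F,u\models\Box p$. Hence $F,w\models\romb\Box p$. Alternatively, one can invoke the previous lemma: $\romb p\to\Box p$ holds at maximal points, and $A1$ is equivalent over $\logic{S4}$ to $\romb(\romb p\to\Box p)$.

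\emph{From validity to the condition.} Suppose $w$ sees no maximal point. Then every $x\in R(w)$ has some $y\neq x$ with $xRy$. I want a set $A\subseteq R(w)$ such that every $x\in R(w)$ sees a point of $A$ and also a point outside $A$. Setting $V(p)=A$ then gives $F,w\models\Box\romb p$ and $F,w\models\Box\romb\neg p$. The second fact means $F,w\not\models\romb\Box p$, which refutes $A1$ at $w$.

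To obtain $A$, I apply Zorn's lemma to pairs $(A,B)$ of disjoint subsets of $R(w)$ such that every $a\in A$ sees some point of $B$ and every $b\in B$ sees some point of $A$. These pairs are ordered by componentwise inclusion. The union of a chain is again such a pair, because the defining conditions are local existential statements. So there is a maximal pair $(A,B)$.

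\emph{The main step: the maximal pair covers $R(w)$.} By transitivity, any $x\in R(w)$ that sees some point of $A\cup B$ sees points of both $A$ and $B$. For instance, if $xRa$ with $a\in A$, and $a$ sees $b\in B$, then $x$ sees $b$. Since $A\cap B=\varnothing$, such an $x$ sees a point in $A$ and a point outside $A$, as required.

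So it remains to show that the set $C$ of points of $R(w)$ that see nothing in $A\cup B$ is empty. First, $C$ is upward closed: if $xRy$ and $y$ sees a point of $A\cup B$, then so does $x$. Second, $C$ contains no maximal points, since none exist in $R(w)$. Suppose $C\neq\varnothing$. By dependent choice, build a sequence $x_0Rx_1Rx_2R\cdots$ in $C$ with $x_{k+1}\neq x_k$.

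Put the even-indexed points into $A$ and the odd-indexed points into $B$. Each $x_{2k}$ sees $x_{2k+1}$, and each $x_{2k+1}$ sees $x_{2k+2}$. The two new sets are disjoint from each other, because transitivity gives $x_iRx_j$ for all $i<j$, so a repeated point would form a cluster (and could simply be split as in the two-point cluster case). They are also disjoint from $A\cup B$, since they lie in $C$. This yields a strictly larger admissible pair, contradicting maximality.

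I expect this step to be the main obstacle. One must handle the set-theoretic choice correctly and check that the chain can be made to consist of distinct points. If the chain revisits a point, it lies in a nontrivial cluster $\set{x,y}$ with $xRy$ and $yRx$, and then simply adding $x$ to $A$ and $y$ to $B$ already gives the contradiction.
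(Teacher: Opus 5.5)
Your proof is correct and complete. The paper does not prove this lemma; it quotes it from \cite{chagrov_zakharyaschev}. The natural comparison is with the proof of Lemma~\ref{lem:MKDuo_prop}: your hard direction is the special case $Z=S=R(w)$ of the splitting problem solved there, namely finding $A$ such that every relevant point sees a point of $A$ and a point outside $A$.

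\textbf{How the paper does it.} The authors well-order $S$ and build $A_\alpha,B_\alpha,C_\alpha$ by transfinite recursion with three cases. One of these cases is your alternating chain $w_0,w_1,\dots$ of distinct points.

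\textbf{How your route differs.} You use Zorn's lemma instead, with the invariant ``every point of $A$ sees a point of $B$ and vice versa''. This invariant survives unions of chains, and by transitivity it automatically handles every point that sees $A\cup B$. So only the paper's second case, a point seeing nothing placed so far, needs an argument. Your treatment of a revisited point is a clean substitute for the paper's rule for terminating the chain: such a point gives two distinct mutually accessible points, and you put one in $A$ and the other in $B$.

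\textbf{What each approach buys.} Your invariant relies on $R(w)$ containing no maximal points, which holds in this lemma. In Lemma~\ref{lem:MKDuo_prop} the set $S$ may contain $R_1$-maximal points outside $Z$. Such points see only themselves, so they cannot satisfy your two-sided invariant. This is why the paper needs its extra cases: unplaced non-$Z$ points go into $B$, and a $Z$-point that sees only already placed non-$Z$ points goes on the side opposite to one of them. Your argument is shorter and cleaner for this statement, while the paper's recursion handles the more general setting.
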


It is, however, interesting that in general in the non-reflexive and non-transitive case this formula is not elementary.

\begin{theorem}[\cite{vanbenthem_modal_formulae_relational_properties} and \cite{goldblatt_first_order_definability}]
    There is no first-order formula $\phi$ such that 
    \[ 
	\forall F \quad F \models_{FO} \phi \iff F \models A1,
	\]
    where symbol $\models_{FO}$ means truth in the sense of first-order logic.
\end{theorem}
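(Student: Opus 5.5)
The statement is the classical non-definability result, and I would prove it by the Löwenheim--Skolem route. Any first-order condition is preserved under passing to an elementary substructure. So it suffices to exhibit an uncountable frame $F$ with $F\models A1$ that has a countable elementary subframe $F_0$ with $F_0\not\models A1$.

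\textbf{The frame.} The frame has to be neither reflexive nor transitive, since by Lemma~\ref{lem:McKinseyProperty} $A1$ is elementary on $\logic{S4}$-frames. I would take
\[
W=\set{w}\cup\setdef[v_n]{n\in\NN}\cup\setdef[v_{n,i}]{n\in\NN,\ i\in\set{0,1}}\cup\setdef[z_f]{f\in\set{0,1}^{\NN}}
\]
with the following arrows:
\begin{itemize}
\item $wRv_n$ and $wRz_f$;
\item $v_nRv_{n,0}$ and $v_nRv_{n,1}$;
\item $v_{n,i}Rv_{n,i}$;
\item $z_fRv_{n,f(n)}$ for all $n$.
\end{itemize}
There are no other arrows. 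In particular $w$, $v_n$ and $z_f$ are irreflexive, and the $v_{n,i}$ are reflexive dead ends.

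\textbf{Validity in $F$.} Fix a valuation.
\begin{itemize}
\item At $v_{n,i}$ the formula $\Box\romb p$ reduces to $p$, and so does $\romb\Box p$, so $A1$ is trivial there.
\item At $v_n$ or $z_f$, if $\Box\romb p$ holds, then some successor exists and satisfies $\romb p$. Every successor is some $v_{n,i}$, so it satisfies $p$, hence also $\Box p$. Thus $\romb\Box p$ holds.
\item At $w$, suppose $w\models\Box\romb p$. Then each $v_n\models\romb p$, so for each $n$ we can choose $g(n)$ with $v_{n,g(n)}\models p$. The point $z_g$ exists in $F$, and all its successors satisfy $p$, so $z_g\models\Box p$. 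Hence $w\models\romb\Box p$.
\end{itemize}

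\textbf{The countable subframe.} By downward Löwenheim--Skolem, applied to $F$ as a structure for the language $\set{R}$, take a countable elementary substructure $F_0$ containing $w$, all $v_n$ and all $v_{n,i}$. Its $z$-points are $z_{f_0},z_{f_1},\dots$ for countably many functions $f_k$. Choose $g\in\set{0,1}^{\NN}$ that agrees with each $f_k$ somewhere and disagrees with it somewhere. For instance, fix disjoint pairs of positions $(2\langle k,0\rangle,2\langle k,1\rangle)$ via a pairing function, and set $g=f_k$ at the first position and $g=1-f_k$ at the second. Put $V(p)=\setdef[v_{n,g(n)}]{n\in\NN}$. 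Then:
\begin{itemize}
\item Each $v_n$ sees $v_{n,g(n)}\models p$ and $v_{n,1-g(n)}\not\models p$.
\item Each $z_{f_k}$ sees a $p$-point, where $f_k$ agrees with $g$, and a non-$p$-point, where it disagrees.
\end{itemize}
Hence every successor of $w$ satisfies $\romb p\wedge\neg\Box p$. So $w\models\Box\romb p$ but $w\not\models\romb\Box p$, and $F_0\not\models A1$. If $A1$ were defined by a first-order $\phi$, then $F\models_{FO}\phi$ would give $F_0\models_{FO}\phi$, a contradiction.

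\textbf{Main obstacle.} The delicate point is designing the frame so that validity of $A1$ in $F$ reduces cleanly to the existence of $z_g$ for every choice function $g$. This is why I would keep $v_n$ and $z_f$ irreflexive: then $p$'s value at those points is irrelevant. With that choice, the remaining argument is a routine diagonalization over the countably many $z$-points of $F_0$.
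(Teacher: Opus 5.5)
Your proof is correct. The paper gives no argument of its own for this statement and only cites van Benthem and Goldblatt; what you have written is van Benthem's downward Löwenheim--Skolem proof from the first cited source, with the same frame as the textbook treatment in \cite{blackburn_modal_logic}, and your pairing-function choice of $g$ correctly makes every $z$-point of $F_0$ see both a $p$-point and a $\neg p$-point. (A minor aside: Lemma~\ref{lem:McKinseyProperty} only shows that $F$ cannot be both reflexive and transitive, not that it must fail both, but this remark is motivational and the proof does not depend on it.)
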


In topological spaces, the analogue of maximality is isolation.

\begin{definition}
	In a topological space, a point $ x $ is called \emph{isolated} if a singleton set $ \set{x} $ is open. 
\end{definition}

\begin{definition}
    A topological space $ \X $ is called \emph{weakly scattered} if the set of all isolated points is dense in $\X$, that is, every nonempty open subset of $\X$ contains an isolated point.
\end{definition}

\begin{theorem}[\cite{gabelaia_modal_def_2001}]
	$ \logic{S4.1} $ is the logic of weakly scattered spaces.
\end{theorem}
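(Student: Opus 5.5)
We must prove two inclusions: every weakly scattered space validates $\logic{S4.1}$, and every formula valid in all weakly scattered spaces belongs to $\logic{S4.1}$.

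\textbf{Soundness.} Let $\X$ be weakly scattered. By the McKinsey–Tarski theorem, $\X$ validates $\logic{S4}$. By the lemma listing equivalent forms of the McKinsey axiom over $\logic{S4}$, it suffices to check that $\Diamond(\Box p\vee\Box\neg p)$ holds at every point under every valuation. Take any point $x$ and any open neighborhood $U$ of $x$. By weak scatteredness, $U$ contains an isolated point $y$. Since $\{y\}$ is open, exactly one of $\Box p$ and $\Box\neg p$ holds at $y$ under the given valuation. Thus every neighborhood of $x$ meets the set where $\Box p\vee\Box\neg p$ is true, so $\Diamond(\Box p\vee\Box\neg p)$ holds at $x$. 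This is the topological analogue of the lemma on maximal points.

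\textbf{Completeness.} I will use the known Kripke completeness of $\logic{S4.1}$ with respect to finite reflexive transitive frames satisfying the condition of Lemma~\ref{lem:McKinseyProperty}. This follows from the finite model property of $\logic{S4.1}$ in \cite{chagrov_zakharyaschev}, via filtration or its standard cluster-based completeness proof. So suppose $A\notin\logic{S4.1}$, and take a finite $\logic{S4}$-frame $F=(W,R)$ in which every point sees a maximal point $u$ with $R(u)=\{u\}$, and with $F\not\models A$. By the theorem $Log(F)=Log(\Top(F))$, we get $\Top(F)\not\models A$.

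It remains to show that $\Top(F)$ is weakly scattered. A nonempty open set $V$ contains some $R(w)$, since the sets $R(w)$ form a base. By the McKinsey property, $R(w)$ contains a $u$ with $R(u)=\{u\}$. Then $\{u\}=R(u)$ is a basic open set, so $u$ is an isolated point in $V$. Hence $A$ fails on a weakly scattered space, and the two inclusions give the theorem.

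\textbf{Main obstacle.} The only nontrivial ingredient is Kripke completeness of $\logic{S4.1}$ with respect to frames having the McKinsey first-order property, i.e.\ the canonicity issue, since McKinsey's axiom is not canonical in general. For this I rely on the finite model property, which is available in \cite{chagrov_zakharyaschev}. If it had to be proven here, I would first filtrate the canonical model of $\logic{S4.1}$ through a finite subformula-closed set, using a transitive filtration. I would then check that every final cluster of the filtrated frame is a singleton, by using the axiom $\Diamond(\Box p\vee\Box\neg p)$ instantiated with the formulas that separate the points of a cluster.
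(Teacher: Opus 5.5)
The paper does not prove this statement; it only cites it from \cite{gabelaia_modal_def_2001}, so there is no in-paper proof to compare with. Your main argument is correct and is the standard one. For soundness, an isolated point in every open set forces $\Diamond(\Box p\vee\Box\neg p)$; this is the topological counterpart of Lemma~\ref{lem:isolated}. For completeness, $\Top(F)$ is weakly scattered whenever $F$ is an $\logic{S4}$-frame in which every point sees some $u$ with $R(u)=\{u\}$.

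There are, however, two problems with your \emph{Main obstacle} paragraph.

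\textbf{You do not need the finite model property.} Your completeness argument never uses that $F$ is finite. The McKinsey axiom fails to be canonical over $\logic{K}$, but it is canonical over $\logic{K4}$, so $\logic{S4.1}$ is canonical. The paper itself relies on this when it proves $\Lo$ canonical. By Lemma~\ref{lem:McKinseyProperty}, which holds for arbitrary $\logic{S4}$-frames, the canonical frame of $\logic{S4.1}$ can serve directly as your $F$. The ``canonicity issue'' you name is therefore not an obstacle here.

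\textbf{The filtration sketch fails as stated.} A plain transitive filtration through a subformula-closed $\Psi$ need not have singleton final clusters. Take $\Psi=\{p\}$. Both $p\wedge\Diamond\neg p$ and $\neg p\wedge\Diamond p$ are $\logic{S4.1}$-consistent, as the two-point chain shows. So the transitive filtration of the canonical model is a two-element cluster, and that cluster refutes $A1$. The instances $\Box\chi\vee\Box\neg\chi$ you planned to use are not in $\Psi$, so the filtration conditions give you no control over them.

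The standard repair is the device the paper uses in its FMP proof for $\Lo$: refine $\equiv_\Psi$ so that maximal points of the (canonical) frame form their own classes.
\begin{itemize}
\item If $[u]$ contains only maximal points, then $[u]R^{\min}[v]$ forces $v'=u'$ for some representatives $u'\in[u]$, $v'\in[v]$. Hence $[u]$ is a singleton final cluster in the transitive closure.
\item Every $[x]$ sees such a class, because $x$ sees some maximal $u$ in the original frame.
\end{itemize}
With this refinement the sketch goes through, but as you wrote it, it does not.
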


\begin{lemma}\label{lem:isolated}
    If $x$ is an isolated point in a topological space $\X$, then the formula $\romb p \to \Box p$ is always valid in $x$.
\end{lemma}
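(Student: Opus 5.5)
The plan is to unfold the topological truth conditions at an isolated point and observe that $\romb$ and $\Box$ both collapse to the point itself, in the same way as in the preceding lemma about maximal points in reflexive frames. We fix a topological space $\X$, a valuation $V$, the model $M=(\X,V)$, and an isolated point $x$, so that $\set{x}$ is open. It suffices to show that $M,x\models \romb p \iff M,x\models p$ and $M,x\models \Box p \iff M,x\models p$. Then $\romb p\to\Box p$ is equivalent at $x$ to $p\to p$, which is true under every valuation.

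For $\Box$: if $M,x\models p$, take $U=\set{x}\in T$. Then $x\in U$ and every $y\in U$ satisfies $p$, so $M,x\models\Box p$. Conversely, if $M,x\models \Box p$, there is an open $U\ni x$ all of whose points satisfy $p$, and in particular $M,x\models p$. This converse direction is just the reflexivity axiom $\Box p\to p$ given by McKinsey and Tarski.

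For $\romb$, we have $\romb p=\neg\Box\neg p$, and we apply the same argument to $\neg p$: $M,x\models\Box\neg p\iff M,x\models\neg p$. Negating both sides gives $M,x\models\romb p\iff M,x\models p$. Combining the two equivalences, $M,x\models\romb p\to\Box p$ holds iff $M,x\models p\to p$, which is always true.

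No step is a real obstacle. The only point needing care is to use the open neighbourhood $\set{x}$ as the witness in the $\Box$ clause; everything else is an immediate unfolding of the definitions. The argument works with $p$ replaced by any formula $A$, since it only uses the truth value of $A$ at $x$.
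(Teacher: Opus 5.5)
Your proof is correct: using the open neighbourhood $\set{x}$ as the witness shows that both $\Box p$ and $\romb p$ collapse to $p$ at $x$, so $\romb p \to \Box p$ reduces to $p \to p$. The paper states this lemma without proof. Your argument is the one the paper uses for the preceding lemma on maximal points $R(x)=\set{x}$ in reflexive frames, so it matches the intended approach.
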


\begin{lemma}\label{lem:1}
	Let $ \X_1 $ and $ \X_2 $ be two topological spaces, and suppose $ \X_1 $ is weakly scattered. Then
	\[ 
	\X_1 \times \X_2 \models \romb_1\Box_2(\romb_1 p \to \Box_1 p).
	\]
\end{lemma}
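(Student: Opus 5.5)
Fix a valuation $V$ and a point $(x_1,x_2)\in X_1\times X_2$. We must exhibit a $T_1^h$-neighborhood of $(x_1,x_2)$ all of whose points see, in the vertical sense, a point with the required property. Since the horizontal topology has a base of sets $U\times\set{x_2}$ with $U\in T_1$, the statement $\romb_1 B$ holds at $(x_1,x_2)$ if and only if every open $U\ni x_1$ contains some $y$ with $(y,x_2)\models B$. So we argue directly through this characterization, taking $B=\Box_2(\romb_1 p\to\Box_1 p)$.

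Let $U\in T_1$ be an arbitrary open neighborhood of $x_1$. Since $\X_1$ is weakly scattered, $U$ contains an isolated point $y$ of $\X_1$. It remains to show that $(y,x_2)\models \Box_2(\romb_1 p\to\Box_1 p)$. We take the vertical open neighborhood $\set{y}\times X_2$ of $(y,x_2)$, which is basic because $X_2\in T_2$. For every point $(y,z)$ in it, the set $\set{y}\times\set{z}=\set{(y,z)}$ is a basic $T_1^h$-open set, since $\set{y}\in T_1$. Hence $(y,z)$ is isolated in the horizontal topology.

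By Lemma~\ref{lem:isolated}, applied to the space $(X_1\times X_2, T_1^h)$ with the restriction of the valuation, the formula $\romb_1 p\to\Box_1 p$ is true at $(y,z)$. Concretely, both $\romb_1 p$ and $\Box_1 p$ reduce to $p$ at $(y,z)$, because $\set{(y,z)}$ is the minimal horizontal neighborhood. Therefore $\Box_2(\romb_1 p\to\Box_1 p)$ holds at $(y,x_2)$. Since $U$ was arbitrary, $\romb_1\Box_2(\romb_1 p\to\Box_1 p)$ holds at $(x_1,x_2)$, and since the point and the valuation were arbitrary, the formula is valid.

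No step here is a real obstacle. The only point needing care is the key observation: isolation of $y$ in $\X_1$ makes every point of the entire vertical fibre $\set{y}\times X_2$ horizontally isolated. This is what allows the $\Box_2$ to be placed in front of the formula.
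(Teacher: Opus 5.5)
Your proof is correct and follows essentially the same route as the paper: choose an isolated point $y$ of $\X_1$ inside an arbitrary open $U\ni x_1$, observe that every point of the fibre $\set{y}\times X_2$ is horizontally isolated, so $\romb_1 p\to\Box_1 p$ holds there by Lemma~\ref{lem:isolated}, and conclude $\romb_1\Box_2(\romb_1 p\to\Box_1 p)$ at $(x_1,x_2)$. Your explicit characterization of $\romb_1$ via the basic sets $U\times\set{x_2}$ spells out a step the paper leaves implicit; it is not a different method.
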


\begin{proof}
	Consider an arbitrary point $\langle x,y\rangle \in \X_1 \times \X_2$ 
	and an open neighborhood of the form $U \times \{y\}$, where $U \in T_1$ 
	and $\X_1=(X_1, T_1)$. Since $\X_1$ is weakly scattered, 
	the set $U$ contains a point $x'$ that is isolated in $\mathcal{X}_1$. 
	
	Then, for any $y' \in \X_2$, the point $\langle x',y'\rangle$ 
	is isolated in the horizontal topology, and hence, by Lemma~\ref{lem:isolated}, 
	\[ 
	\langle x',y'\rangle \models \Diamond_1 p \to \Box_1 p.
	\]
	It follows that
	\[ 
	\langle x',y\rangle \models \Box_2(\Diamond_1 p \to \Box_1 p).
	\]
	Since any neighborhood of $\langle x,y\rangle$ 
	contains a point where $\Box_2(\Diamond_1 p \to \Box_1 p)$ holds, 
	we conclude that
	\[ 
	\langle x,y\rangle \models \Diamond_1\Box_2(\Diamond_1 p \to \Box_1 p).
	\]
\end{proof}

For convenience, we use the following notation:
$$ 
\Lo = \logic{S4.1} \ast \logic{S4.1} + \romb_1\Box_2(\romb_1 p \to \Box_1 p) + \romb_2\Box_1(\romb_2 p \to \Box_2 p).
$$

\begin{corollary}\label{cor:S41xS41correctness}
$\logic{S4.1} \times_t \logic{S4.1} \supseteq \Lo$.
\end{corollary}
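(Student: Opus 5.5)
Since $\logic{S4.1} \times_t \logic{S4.1}$ is defined as $Log$ of a class of bitopological spaces, it is a normal bimodal logic: it contains all tautologies and the normality axioms, and it is closed under \textsc{Sub}, \textsc{MP} and \textsc{Nec}, because validity in a fixed space is preserved by these rules. So to prove $\Lo \subseteq \logic{S4.1} \times_t \logic{S4.1}$ it suffices to check that every generating axiom of $\Lo$ is valid in every product $\X_1 \times \X_2$ with $\X_1 \models \logic{S4.1}$ and $\X_2 \models \logic{S4.1}$.

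\textbf{Step 1: the fusion part.} The theorem of Kremer quoted above gives $\logic{S4.1} \ast \logic{S4.1} \subseteq \logic{S4.1} \times_t \logic{S4.1}$, since $\logic{S4.1}$ is an $\logic{S4}$-logic. Alternatively, one can check directly that the horizontal topology of $\X_1 \times \X_2$ is a disjoint union of copies of $\X_1$. Hence $\logic{S4.1}$ formulas in $\Box_1$ hold there, and symmetrically for $\Box_2$.

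\textbf{Step 2: the axiom $\romb_1\Box_2(\romb_1 p \to \Box_1 p)$.} By the theorem of \cite{gabelaia_modal_def_2001}, $\logic{S4.1}$ is the logic of weakly scattered spaces. The one point needing care is the direction we use: if $\X_1 \models \logic{S4.1}$, then $\X_1$ is weakly scattered. I would verify this directly. Suppose some nonempty open $U$ contains no isolated point. Put $V(p)=D$, where $D \subseteq U$ is a set such that both $D$ and $U\setminus D$ are dense in $U$. Then $\Box\romb p \to \romb\Box p$ fails at points of $U$.

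Constructing such a $D$ is the main technical obstacle. It amounts to showing that a space without isolated points is resolvable in the relevant weak sense. If this is delicate, I would instead accept the intended reading of the cited theorem: a space validates $\logic{S4.1}$ iff it is weakly scattered. This equivalence is the standard content of that result.

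Given weak scatteredness, Lemma~\ref{lem:1} yields validity of the axiom in $\X_1 \times \X_2$.

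\textbf{Step 3: the axiom $\romb_2\Box_1(\romb_2 p \to \Box_2 p)$.} This follows by the symmetric argument with the roles of the coordinates swapped, since $\X_2$ is also weakly scattered. Closure of the logic under the rules then gives the inclusion.
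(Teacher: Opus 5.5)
\textbf{Step 2 is a genuine gap, and it cannot be closed.} Your overall plan (Kremer's inclusion for the fusion part, Lemma~\ref{lem:1} for the two new axioms) is the same route by which the paper obtains this corollary. The step you flagged in Step~2 is exactly where it breaks.

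The implication you need, $\X_1\models\logic{S4.1}\Rightarrow\X_1$ weakly scattered, is false. A space without isolated points need not be resolvable, so your set $D$ need not exist. The theorem of \cite{gabelaia_modal_def_2001} is only a soundness-and-completeness result for the class of weakly scattered spaces, not a definability result; in fact that class is not modally definable. Here is a concrete witness. Fix a free ultrafilter $\mathcal U$ on $\omega$ and let $\X_{\mathcal U}=(\omega,\mathcal U\cup\set{\varnothing})$. No singleton is open, so there are no isolated points. Now take any $A\subseteq\omega$. If $A\in\mathcal U$, then $A$ is open and meets every nonempty open set, so $\mathrm{Cl}\,\mathrm{Int}\,A=\mathrm{Cl}\,A=\omega$. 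If $A\notin\mathcal U$, then $A$ is closed and contains no nonempty open set, so $\mathrm{Int}\,\mathrm{Cl}\,A=\mathrm{Int}\,A=\varnothing$. Either way $\mathrm{Int}\,\mathrm{Cl}\,A\subseteq\mathrm{Cl}\,\mathrm{Int}\,A$, so $\X_{\mathcal U}\models\logic{S4.1}$.

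\textbf{The statement itself fails.} With $\times_t$ defined over all spaces validating $\logic{S4.1}$, as in the paper, no repair of Step~2 is possible. In $\X_{\mathcal U}\times\X_{\mathcal U}$ take $V(p)=\setdef[\pair{x,y}]{x\le y}$. The horizontal slice of $V(p)$ at height $y$ is the finite set $\set{0,\ldots,y}\notin\mathcal U$, which is closed with empty interior. Hence $\romb_1p\to\Box_1p$ holds at $\pair{x,y}$ iff $x>y$. Every vertical neighbourhood of $\pair{x,y}$ contains some $\set{x}\times V$ with $V\in\mathcal U$. Such a $V$ is infinite, so it contains a point $\pair{x,y'}$ with $y'\ge x$. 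Thus $\Box_2(\romb_1p\to\Box_1p)$ is false at every point, and therefore so is $\romb_1\Box_2(\romb_1p\to\Box_1p)$.

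The paper's derivation from Lemma~\ref{lem:1} tacitly uses the same false implication. If you replace ``$\logic{S4.1}$-space'' by ``weakly scattered space'' throughout, your Steps 1--3 are a correct proof of $\Lo\subseteq\Log(\setdef[\X_1\times\X_2]{\X_1,\X_2\text{ weakly scattered}})$. The corollary holds only if $\logic{S4.1}\times_t\logic{S4.1}$ is redefined with weakly scattered factors.
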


\begin{lemma}\label{lem:MKDuo_prop}
	For any $ \logic{S4.1} \ast \logic{S4} $-Kripke frame $ F = (W, R_1, R_2) $, we have
	\[ 
	F \models \romb_1\Box_2(\romb_1 p \to \Box_1 p) \iff \forall x \exists y (x R_1 y \ \& \ \forall z (y R_2 z \Rightarrow R_1 (z) = \set{z})).
	\]

     For a given $x$, we denote by $\mu(x)$ the point $y$ provided by this lemma.
\end{lemma}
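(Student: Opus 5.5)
We prove the two directions separately. Throughout, $R_1$ and $R_2$ are reflexive and transitive, and $F \models A1$ for the first modality. By the lemma on maximal points, for a reflexive frame the formula $\romb_1 p \to \Box_1 p$ holds at $z$ under every valuation iff $R_1(z)=\set{z}$.

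\textbf{($\Leftarrow$)} Assume the first-order condition. Fix any valuation and any point $x$, and let $y = \mu(x)$, so $xR_1y$.
\begin{itemize}
\item Every $z$ with $yR_2z$ satisfies $R_1(z)=\set{z}$.
\item Hence $\romb_1 p \to \Box_1 p$ holds at every such $z$, so $y \models \Box_2(\romb_1 p \to \Box_1 p)$.
\item Since $xR_1y$, we get $x\models \romb_1\Box_2(\romb_1 p\to\Box_1 p)$.
\end{itemize}
This direction is routine.

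\textbf{($\Rightarrow$)} Argue by contraposition. Suppose there is a point $x$ such that every $y\in R_1(x)$ has some $z_y$ with $yR_2 z_y$ and $R_1(z_y)\neq\set{z_y}$. We build one valuation refuting the formula at $x$, i.e.\ making $\Box_1\romb_2(\romb_1 p\wedge\neg\Box_1 p)$ true at $x$. It suffices that for each $y\in R_1(x)$ the point $z_y$ satisfies $\romb_1 p\wedge\neg\Box_1 p$.

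The obstacle is that a single valuation must work for all these $z_y$ simultaneously. The natural per-point choice (put in $p$ some $R_1$-successor of $z_y$, and keep $z_y$ out of $p$) can conflict across different $y$. We avoid this with the McKinsey property (Lemma~\ref{lem:McKinseyProperty}) for $R_1$. Set
\[
V(p)=\setdef[w]{R_1(w)=\set{w}},
\]
the set of $R_1$-maximal points.
\begin{itemize}
\item Each $z_y$ is not maximal, so $z_y\notin V(p)$. By reflexivity, $z_y\not\models\Box_1 p$.
\item By the McKinsey property, $z_y$ sees some maximal point $u$, which lies in $V(p)$. So $z_y\models\romb_1 p$.
\end{itemize}
Thus every $z_y$ satisfies $\romb_1 p\wedge\neg\Box_1 p$, hence $y\models\romb_2(\romb_1p\wedge\neg\Box_1p)$ for all $y\in R_1(x)$. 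So $x\models\Box_1\romb_2(\romb_1 p\wedge\neg\Box_1 p)$, which refutes the formula at $x$.

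The main step is choosing this uniform valuation. With $V(p)$ equal to the set of maximal points, the McKinsey property of $R_1$ gives both conjuncts at every non-maximal point, so no compatibility issue arises. Transitivity is not even needed beyond what the McKinsey lemma already uses.
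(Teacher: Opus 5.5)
Your proof is correct, and it takes a genuinely different and much shorter route than the paper's.

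The paper refutes the formula at $x$ by well-ordering $S=R_1(Z)$, where $Z$ is the set of non-$R_1$-maximal points reachable from $x$ by $R_1$ then $R_2$. By transfinite induction it builds a set $A\subseteq S$ such that every $z\in Z$ sees both a point of $A$ and a point of $S\setminus A$. That construction uses only reflexivity and transitivity of $R_1$, never the McKinsey property. So it establishes the correspondence for arbitrary $\logic{S4}\ast\logic{S4}$-frames. Taking $R_2$ to be the identity, it even reproves the hard direction of Lemma~\ref{lem:McKinseyProperty}.

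You instead use the hypothesis that $(W,R_1)$ is an $\logic{S4.1}$-frame. With $V(p)$ the set of $R_1$-maximal points, $\romb_1 p\to\Box_1 p$ fails at exactly the non-maximal points. This gives a uniform valuation at once, and the compatibility problem the paper's induction is designed to solve disappears.

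The trade-off is that your argument is not self-contained. For infinite frames, the hard combinatorics — a choice-based splitting argument of the same kind — sits inside the direction ``$A1$ valid $\Rightarrow$ every point sees a maximal point'' of Lemma~\ref{lem:McKinseyProperty}, which you take as given. For the lemma as stated, and for all its uses in the paper (which concern $\logic{S4.1}\ast\logic{S4}$- or $\logic{S4.1}\ast\logic{S4.1}$-frames), relying on it is legitimate.
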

    \begin{proof}
        Clearly, if the right-hand condition holds for a frame, then the formula $\Diamond_1 \Box_2 (\Diamond_1 p \to \Box_1 p)$ is valid in that frame. Therefore, it suffices to prove the left-to-right implication.
        
        Suppose that 
        \[
        \exists x \, \forall y \,(x R_1 y \implies \exists z \, (y R_2 z \ \wedge \ R_1(z) \neq \{z\})).
        \] 
        By reflexivity, $R_1(z) \neq \{z\}$ means that $\exists w \, (w \neq z \ \wedge \ z R_1 w)$. Let $$
        Z = \setdef[z]{\exists y (x R_1 y \land y R_2 z \land R_1(z) \ne \set{z})}\hbox{ and }S = R_1(Z).
        $$ 
        
        Assume that we can find a set $A \subseteq S$ such that 
        \[
        \forall z \in Z \ \exists u \in A \ \exists v \in S \setminus A ( z R_1 u \ \wedge \ z R_1 v).
        \] 
        Then, taking the valuation $V(p) = A$, we obtain that for all $y \in R_1(x)$ there exists $z \in R_2(y)$ such that there exist $u \in V(p)$ and $v \notin V(p)$ with $z R_1 u$ and $z R_1 v$. Hence, at $z$ the formula $\Diamond_1 p \to \Box_1 p$ is false, and therefore at $y$ the formula $\Box_2(\Diamond_1 p \to \Box_1 p)$ is false. Consequently, at $x$ the formula $\Diamond_1 \Box_2(\Diamond_1 p \to \Box_1 p)$ is false.

        It remains to show that such a set $A$ can be found. We proceed by transfinite induction. First, we choose a well-ordering $<$ on the set $S$ (which exists by Zermelo's theorem). For any point $\alpha\in S$ we want to prove the following: For every $\beta \leq \alpha$ there exist three sets $A_\beta, B_\beta, C_\beta \subseteq S$ satisfying the following conditions:
        \begin{enumerate}
        	\item $\forall \beta_1,\beta_2\leq\alpha\quad(\beta_1 \leq \beta_2 \implies A_{\beta_1} \subseteq A_{\beta_2} \ \&\ B_{\beta_1} \subseteq B_{\beta_2}\ \&\ C_{\beta_1} \subseteq C_{\beta_2})$;
        	\item $\forall \beta\leq\alpha\quad A_\beta \sqcup B_\beta = C_\beta$;
        	\item $\forall \beta\leq\alpha\quad \{\gamma \in S \ |\ \gamma \leq \beta\} \subseteq C_\beta$;
        	\item $\forall \beta\leq\alpha\quad \forall \delta \in C_\beta \cap Z\; \exists u\in A_\beta \exists v\in B_\beta\; (\delta R_1 u \ \&\ \delta R_1 v)$.
        \end{enumerate}
        
        We can then take $\displaystyle A = \bigcup_{\alpha \in S}A_\alpha$.
        
        Suppose it is true for every $\alpha' < \alpha$ and prove it for $\alpha$. We denote $\displaystyle\hat{A}_\alpha = \bigcup_{\alpha' < \alpha}A_{\alpha'}$, $\displaystyle\hat{B}_\alpha = \bigcup_{\alpha' < \alpha}B_{\alpha'}$, $\displaystyle\hat{C}_\alpha = \bigcup_{\alpha' < \alpha}C_{\alpha'}$. If $\alpha \in \hat{C}_\alpha$, then we can take $A_\alpha = \hat{A}_\alpha$, $B_\alpha = \hat{B}_\alpha$, $C_\alpha = \hat{C}_\alpha$. If $\alpha \notin \hat{C}_\alpha$ and $\alpha \notin Z$, then we take $A_\alpha = \hat{A}_\alpha$, $B_\alpha = \hat{B}_\alpha \cup \{\alpha\}$, $C_\alpha = \hat{C}_\alpha \cup \{\alpha\}$.
        
        Now suppose that $\alpha \notin \hat{C}_\alpha$ and $\alpha \in Z$. We consider the following cases: \begin{enumerate}
        	\item $\exists \beta \in R_1(\alpha) \cap Z \cap \hat{C}_\alpha$. Then $A_\alpha = \hat{A}_\alpha$, $B_\alpha = \hat{B}_\alpha \cup \{\alpha\}, C_\alpha = \hat{C}_\alpha \cup \{\alpha\}$.
        	
        	\item $R_1(\alpha) \cap \hat{C}_\alpha = \emptyset$. We construct a (finite or countably infinite) sequence $w_0, w_1, w_2, \ldots$ of points of $R_1(\alpha)$; for uniformity, set $w_0 = \alpha$. Suppose $w_0, \ldots, w_{N-1}$ have already been constructed ($N \ge 1$). If $w_{N-1} \notin Z$, the sequence ends with $w_{N-1}$. Otherwise, since $w_{N-1} \in Z$, there exists $w \neq w_{N-1}$ with $w_{N-1} R_1 w$; fix such a $w$. If $w \in \{w_0, \ldots, w_{N-1}\}$, the sequence ends with $w_{N-1}$; otherwise put $w_N = w$ and continue.
        	
        	By construction the points $w_0, w_1, w_2, \ldots$ are pairwise distinct, and by transitivity of $R_1$ they all belong to $R_1(\alpha) \subseteq S$; in particular, none of them lies in $\hat{C}_\alpha$. We take $A_\alpha = \hat{A}_\alpha \cup \{w_1, w_3, w_5, \ldots\}$, $B_\alpha = \hat{B}_\alpha \cup \{\alpha, w_2, w_4, \ldots\}$, $C_\alpha = \hat{C}_\alpha \cup \{\alpha, w_1, w_2, w_3, \ldots\}$.
        	
        	\item $\exists \beta \i===+==n R_1(\alpha) \cap \hat{C}_\alpha$ and $R_1(\alpha) \cap \hat{C}_\alpha \cap Z = \emptyset$. We take $C_\alpha = \hat{C}_\alpha \cup \{\alpha\}$. If $\beta \in \hat{A}_\alpha$, then we define $A_\alpha = \hat{A}_\alpha$, $B_\alpha = \hat{B}_\alpha \cup \{\alpha\}$. If $\beta \in \hat{B}_\alpha$, then we take $A_\alpha = \hat{A}_\alpha \cup \{\alpha\}$, $B_\alpha = \hat{B}_\alpha$.
        \end{enumerate}
        
        For each point $\alpha$ we constructed sets $A_\alpha, B_\alpha, C_\alpha$ that satisfy conditions 1--4. Setting $\displaystyle A = \bigcup_{\alpha \in S}A_\alpha$, $\displaystyle B = \bigcup_{\alpha \in S}B_\alpha$, and $\displaystyle C = \bigcup_{\alpha \in S}C_\alpha$, we have $\displaystyle A \sqcup B = C = S$, and moreover, 
        $$\forall \delta\in Z\; \exists u \in A \;\exists v \in B \; (\delta R_1 u \;\&\; \delta R_1 v).$$
    \end{proof}

\begin{lemma}\label{lem:MKDuo_prop2}
	For any $ \logic{S4.1} \ast \logic{S4.1} $-Kripke frame $ F = (W, R_1, R_2) $,
	\begin{align*}
    & F \models \romb_1\Box_2(\romb_1 p \to \Box_1 p) \text{ and } F \models \romb_2\Box_1(\romb_2 p \to \Box_2 p)\\
    &\implies \forall x \exists y \exists z\bigl(x R_1 y\;\&\; y R_2 z \;\&\; R_1(z) = \{z\} \;\&\; R_2(z) = \{z\}\bigr).
\end{align*}
\end{lemma}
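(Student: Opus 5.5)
Fix an arbitrary $x\in W$. The plan is to apply Lemma~\ref{lem:MKDuo_prop} twice, once for each index, and then chain the resulting points using transitivity. Since $F$ is an $\logic{S4.1}\ast\logic{S4.1}$-frame, it is in particular an $\logic{S4.1}\ast\logic{S4}$-frame. So the first validity $F\models\romb_1\Box_2(\romb_1 p\to\Box_1 p)$ yields, by Lemma~\ref{lem:MKDuo_prop}, a point $y=\mu(x)$ with $xR_1y$ such that
\[
\forall z\,\bigl(yR_2z\Rightarrow R_1(z)=\set{z}\bigr).
\]
By symmetry (swapping the indices; the frame is also an $\logic{S4}\ast\logic{S4.1}$-frame), the second validity gives the mirrored statement: for every point $u$ there is $v$ with $uR_2v$ and $R_2(t)=\set{t}$ for all $t$ with $vR_1t$.

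The construction is then as follows.
\begin{enumerate}
\item Take $y=\mu(x)$ as above.
\item Apply the mirrored property to $u=y$ to obtain $v$ with $yR_2v$ and $R_2(t)=\set{t}$ for all $t\in R_1(v)$.
\item Since $yR_2v$, the choice of $y$ gives $R_1(v)=\set{v}$.
\item By reflexivity of $R_1$ we have $vR_1v$, so taking $t=v$ in step 2 gives $R_2(v)=\set{v}$.
\item Set $z=v$. Then $xR_1y$, $yR_2z$, $R_1(z)=\set{z}$ and $R_2(z)=\set{z}$, as required.
\end{enumerate}

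The only step needing care is confirming that Lemma~\ref{lem:MKDuo_prop} applies with the indices swapped. Its proof uses only that $R_1$ is reflexive and transitive (the $\logic{S4}$ part) and the McKinsey condition for $R_1$ as part of the hypothesis class. The symmetric version therefore follows by renaming the modalities. The rest of the argument is direct, because reflexivity turns the conclusion "all $R_1$-successors of $v$ are $R_2$-maximal" into "$v$ itself is $R_2$-maximal".
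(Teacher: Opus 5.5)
Your proposal is correct and matches the paper's proof: it also takes $y=\mu(x)$ and $z=\nu(\mu(x))$ from the mirrored form of Lemma~\ref{lem:MKDuo_prop}, gets $R_1(z)=\set{z}$ from $z\in R_2(\mu(x))$, and gets $R_2(z)=\set{z}$ from $z\in R_1(\nu(y))$ by reflexivity. A minor correction to your side remark: the proof of Lemma~\ref{lem:MKDuo_prop} uses only reflexivity and transitivity of $R_1$, not the McKinsey condition, but the index swap is justified either way and your argument is unaffected.
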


\begin{proof}
    Note that, by Lemma \ref{lem:MKDuo_prop} it is also true that $\forall x \exists y (x R_2 y \ \& \ \forall z (y R_1 z \Rightarrow R_2 (z) = \set{z}))$. For every $x$, we denote such $y$ as $\nu(x)$.
    
    Consider a fixed $x$. Take $\mu(x)$ as $y$ from Lemma \ref{lem:MKDuo_prop}. Take $\nu(y) = \nu(\mu(x))$ as $z$. Then $x R_1 y$ and $y R_2 z$.

    Moreover, $R_1(z) = \set{z}$, since $z \in R_2(\mu(x))$. Also, $R_2(z) = \set{z}$, since $z \in R_1(\nu(y))$.
\end{proof}

\section{Canonicity of logic $\Lo$}

We use the standard construction of canonical model (see \cite{chagrov_zakharyaschev,blackburn_modal_logic}).

\begin{definition}
The \textit{canonical model} for a bimodal logic \( \mathcal{L} \subseteq \MLog_2 \) is a Kripke model of the following form:
\[
M_\mathcal{L} = (W_\mathcal{L}, R^{(1)}_\mathcal{L}, R^{(2)}_\mathcal{L}, V_\mathcal{L}),
\]
where:
\begin{itemize}
    \item \( W_\mathcal{L} \) is the set of all maximally consistent \( \mathcal{L} \)-theories,
    \item \( x R^{(i)}_\mathcal{L} y \iff \forall A (\Box_i A \in x \Rightarrow A \in y) \) for \( i \in \{1, 2\} \),
    \item \( V_\mathcal{L}(p) = \{ x \in W_\mathcal{L} \mid p \in x \} \) for every \( p \in \mathrm{PROP} \).
\end{itemize}
\end{definition}

\begin{theorem}[Canonical model theorem]\label{canon_model_prop}
For a logic \( \mathcal{L} \) and its canonical model \( M_\mathcal{L} = (W_\mathcal{L}, R_\mathcal{L}, V_\mathcal{L}) \) the following hold:
\begin{enumerate}
    \item \( M_\mathcal{L}, x \models A \iff A \in x; \)
    \item \( M_\mathcal{L} \models A \iff A \in \mathcal{L}. \)
\end{enumerate}
\end{theorem}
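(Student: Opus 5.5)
The plan is the standard Lindenbaum-style argument, done separately for the two parts; part~2 will follow from part~1.

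For part~1 (the truth lemma), I proceed by induction on the construction of $A$. Variables are handled by the definition of $V_\mathcal{L}$. For $\bot$ I use consistency of $x$. For $A\to B$ I use maximality of $x$: for each $C$, exactly one of $C,\neg C$ is in $x$, and $x$ is closed under modus ponens.

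The modal step $\Box_i A$ is the main one. If $\Box_i A\in x$ and $xR^{(i)}_\mathcal{L}y$, then $A\in y$ by definition, and the induction hypothesis gives $M_\mathcal{L},x\models\Box_i A$. For the converse, assume $\Box_i A\notin x$. I show that the set $\{B\mid \Box_i B\in x\}\cup\{\neg A\}$ is $\mathcal{L}$-consistent. If it were not, there would be $B_1,\dots,B_n$ with $\Box_i B_k\in x$ and $\vdash_\mathcal{L} B_1\wedge\dots\wedge B_n\to A$. Applying \textsc{Nec}, the normality axiom and distribution of $\Box_i$ over conjunctions (a $\logic{K}$-theorem), I get $\vdash_\mathcal{L}\Box_iB_1\wedge\dots\wedge\Box_iB_n\to\Box_iA$. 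This would put $\Box_iA\in x$, a contradiction. Lindenbaum's lemma then extends the set to a maximal consistent theory $y$. By construction $xR^{(i)}_\mathcal{L}y$ and $\neg A\in y$, so $M_\mathcal{L},y\not\models A$ by the induction hypothesis. This Lindenbaum/existence step is the only nontrivial ingredient, and I expect it to be the main obstacle. It needs Zorn's lemma, or an enumeration of the countable formula set, to extend a consistent set to a maximal one.

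For part~2, suppose $A\in\mathcal{L}$. Every maximal consistent theory contains $\mathcal{L}$, so $A\in x$ for all $x$, and part~1 gives $M_\mathcal{L}\models A$. Conversely, suppose $A\notin\mathcal{L}$. Then $\{\neg A\}$ is $\mathcal{L}$-consistent, since otherwise $\neg A\to\bot\in\mathcal{L}$ and hence $A\in\mathcal{L}$. I extend it by Lindenbaum's lemma to some $x\in W_\mathcal{L}$. Part~1 then gives $M_\mathcal{L},x\not\models A$.
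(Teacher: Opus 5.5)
Your proof is correct. It is the standard argument: a truth lemma by induction on $A$, where the $\Box_i$ existence step comes from the $\mathcal{L}$-consistency of $\{B\mid\Box_i B\in x\}\cup\{\neg A\}$ plus Lindenbaum's lemma, and part~2 then follows from part~1 together with Lindenbaum applied to $\{\neg A\}$. The paper gives no proof of its own here but defers to exactly this textbook construction in \cite{chagrov_zakharyaschev,blackburn_modal_logic}, so your approach matches what it relies on.
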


\begin{definition}
A logic \( \mathcal{L} \) is called \textit{canonical}, if it is valid in its own canonical frame: \( F_\mathcal{L} \models \mathcal{L} \).  
That is, every formula from \( \mathcal{L} \) is valid in its corresponding canonical model.
\end{definition}

\begin{theorem}[\cite{chagrov_zakharyaschev}]
If the logic is canonical, it is Kripke complete.
\end{theorem}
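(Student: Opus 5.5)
The standard route is to pass from validity in the canonical model to validity in the underlying frame. Let $\mathcal{L}$ be canonical, so $F_\mathcal{L} \models \mathcal{L}$. I will show that $\mathcal{L} = \Log(F_\mathcal{L})$; the single frame $F_\mathcal{L}$ then witnesses Kripke completeness, since a singleton is a class of Kripke frames.

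First, the inclusion $\mathcal{L} \subseteq \Log(F_\mathcal{L})$ is exactly the canonicity assumption. For the reverse inclusion, take $A \notin \mathcal{L}$. By part 2 of the Canonical model theorem (Theorem~\ref{canon_model_prop}) we have $M_\mathcal{L} \not\models A$. Concretely, $\{\neg A\}$ is $\mathcal{L}$-consistent, so by Lindenbaum's lemma it extends to a maximal consistent theory $x$. Then part 1 of the theorem gives $M_\mathcal{L}, x \not\models A$. Since $M_\mathcal{L}$ is a model based on the frame $F_\mathcal{L}$ with the valuation $V_\mathcal{L}$, the formula $A$ is refuted in $F_\mathcal{L}$ under some valuation. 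Hence $F_\mathcal{L} \not\models A$, that is, $A \notin \Log(F_\mathcal{L})$.

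Combining the two inclusions gives $\mathcal{L} = \Log(F_\mathcal{L})$, so $\mathcal{L}$ is the logic of a class of Kripke frames. The only substantive ingredient is the truth lemma, which the paper has already stated as part of the Canonical model theorem. The remaining point to check is that the refuting model really lives on $F_\mathcal{L}$, which holds by definition. So there is no real obstacle beyond invoking the earlier theorem.
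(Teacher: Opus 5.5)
Your proof is correct and is the standard argument behind the cited result; the paper gives no proof of its own, only the reference to Chagrov--Zakharyaschev. Canonicity gives $\mathcal{L}\subseteq\Log(F_\mathcal{L})$, and the truth lemma applied to a Lindenbaum extension of $\{\neg A\}$ gives the converse, so $\mathcal{L}=\Log(F_\mathcal{L})$. One degenerate case is not covered: if $\mathcal{L}$ is inconsistent, $W_\mathcal{L}=\varnothing$ is not a frame, and you take the empty class of frames instead.
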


Let us show that logic \( \Lo \) is canonical.

\begin{lemma}\label{fusion_prop_1}
    For any formulas $\varphi_1, \hdots, \varphi_n$, the formula $\displaystyle\romb_1 \bigwedge_{i=1}^n\Box_2(\romb_1\varphi_i \to \Box_1\varphi_i) \in \logic{L}$ .
\end{lemma}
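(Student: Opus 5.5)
The plan is to argue by induction on $n$ purely syntactically inside $\Lo$, using the axiom $\romb_1\Box_2(\romb_1 p \to \Box_1 p)$, substitution, and $\logic{S4}$-reasoning for each modality. Since the canonical-frame correspondence is not yet available, no semantic shortcut is used. Write $\theta_i = \romb_1\varphi_i \to \Box_1\varphi_i$ and $\chi_k = \bigwedge_{i\le k}\theta_i$. Because $\Box_2$ distributes over conjunction in $\logic{K}$, the goal is $\romb_1\Box_2\chi_n \in \Lo$. The case $n=1$ is the axiom with $p$ replaced by $\varphi_1$.

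For the step, assume $\romb_1\Box_2\chi_{n-1}\in\Lo$. First I will record two auxiliary $\logic{S4}$ facts about the first modality.
\begin{enumerate}
\item The formula $\theta \to \Box_1\theta$ is provable for any $\theta$ of the shape $\romb_1\varphi\to\Box_1\varphi$. Indeed, if $\Box_1\varphi$ holds, it persists by 4. If $\neg\romb_1\varphi$ holds, then $\Box_1\neg\varphi$ holds, which persists as well. Hence $\chi_k\to\Box_1\chi_k$.
\item The formula $\Box_1\theta \to \Box_1(\romb_1\psi\to\Box_1\psi)$ holds whenever $\psi$ is a Boolean combination of $\varphi$ with formulas $\xi$ satisfying $\Box_1\theta\to(\Box_1\xi\vee\Box_1\neg\xi)$. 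This is the usual ``constant on $R_1$-cones'' argument, encoded through $\Box_1\varphi\vee\Box_1\neg\varphi$.
\end{enumerate}

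The key move is a single well-chosen substitution into the axiom, so that one $R_1$-witness serves all $n$ conjuncts at once. I will substitute
\[
p := \varphi_n \wedge \Box_2\chi_{n-1},
\]
or, if that fails, the variant $p := \neg\Box_2\chi_{n-1}\vee\varphi_n$. The aim is to derive
\[
\romb_1\Box_2\chi_{n-1} \wedge \romb_1\Box_2(\romb_1 p\to\Box_1 p) \to \romb_1\Box_2(\chi_{n-1}\wedge\theta_n)
\]
in $\logic{S4}\ast\logic{S4}$ plus facts 1 and 2. Modus ponens with the induction hypothesis and the axiom instance then finishes the step.

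The main obstacle is exactly this step. The witness for $\Box_2\chi_{n-1}$ and the witness for $\Box_2\theta_n$ are a priori different $R_1$-successors, and $\Box_2\chi_{n-1}$ need not propagate along $R_1$, because there is no commutation axiom. My first attempt is to use the persistence $\chi\to\Box_1\chi$ together with the $\logic{S4.1}$ axiom for $\Box_1$, in the form $\romb_1(\Box_1 q\vee\Box_1\neg q)$ with $q := \Box_2\chi_{n-1}$. This should first move to a point where $\Box_2\chi_{n-1}$ is $R_1$-stable, that is, where $\Box_1\Box_2\chi_{n-1}$ holds. From such a point, the instance of the axiom with $p:=\varphi_n$ gives a further $R_1$-successor that satisfies $\Box_2\theta_n$ and still satisfies $\Box_2\chi_{n-1}$. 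Transitivity of $R_1$ then collapses $\romb_1\romb_1$ to $\romb_1$.

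The delicate point is ruling out the branch $\Box_1\neg\Box_2\chi_{n-1}$. I expect to handle it by choosing the initial $\romb_1$-witness via $\romb_1(\Box_2\chi_{n-1}\wedge(\Box_1 q\vee\Box_1\neg q))$. I would try to obtain this formula from the induction hypothesis and the $\logic{S4.1}$ axiom applied to $q$ relativized to $\Box_2\chi_{n-1}$. If relativization does not go through, I will fall back on proving the stronger statement $\romb_1\Box_1\Box_2\chi_n$ by the same induction.
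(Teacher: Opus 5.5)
Your central step cannot work as stated. The implication $\romb_1\Box_2\chi_{n-1}\wedge\romb_1\Box_2(\romb_1 p\to\Box_1 p)\to\romb_1\Box_2(\chi_{n-1}\wedge\theta_n)$ is not derivable in $\logic{S4}\ast\logic{S4}$ with your auxiliary facts. It is not derivable even in $\logic{S4.1}\ast\logic{S4.1}$, for either of your substitutions.

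Here is a countermodel. Take $n=2$, $\varphi_i=q_i$, and points $x,y,y',y'',z,z',z''$. Let $R_1$ be the reflexive closure of $\{(x,y),(x,z),(y',y''),(z',z'')\}$ and $R_2$ the reflexive closure of $\{(y,y'),(z,z')\}$; this is an $\logic{S4.1}\ast\logic{S4.1}$-frame. Let $q_1$ hold only at $z''$, and $q_2$ only at $y,y''$ (for your variant $p$, also at $z''$). Then $\theta_1$ fails only at $z'$, and $\theta_2$ fails at $x$ and $y'$. At $x$, the point $y$ witnesses $\romb_1\Box_2\theta_1$. The point $z$ witnesses $\romb_1\Box_2(\romb_1 p\to\Box_1 p)$, since $p$ is constant on $R_1(z')=\{z',z''\}$. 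Yet none of $x,y,z$ satisfies $\Box_2(\theta_1\wedge\theta_2)$. The two $\romb_1$-witnesses are incomparable, and no pointwise implication can merge them. The implication is a theorem of $\Lo$ only because its consequent already is, so this route is circular.

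The missing idea is that both formulas you want to combine are \emph{theorems}, so you may necessitate them: $\Box_1\romb_1\Box_2\chi_{n-1}\in\Lo$ and $\Box_1\romb_1\Box_2\theta_n\in\Lo$. You already use this tacitly in your first attempt, when you let the axiom instance hold at a further successor; you must use it for the induction hypothesis too.

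With it, the step goes through as follows.
\begin{enumerate}
\item McKinsey in the form $\Box_1\romb_1 X\to\romb_1\Box_1 X$ gives $\romb_1\Box_1\Box_2\theta_n$.
\item The $\logic{S4}$ principle $(\Box_1 a\wedge\romb_1 b)\to\romb_1(a\wedge b)$ then yields $\romb_1(\romb_1\Box_2\chi_{n-1}\wedge\Box_1\Box_2\theta_n)$.
\item Applying the same principle inside gives $\romb_1\romb_1(\Box_2\chi_{n-1}\wedge\Box_2\theta_n)$.
\item Axiom 4 collapses $\romb_1\romb_1$ to $\romb_1$.
\end{enumerate}
This is the paper's proof. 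It needs no special substitution and none of your facts~1 and~2. Fact~1 gives $R_1$-persistence of $\theta$, but what you would need is persistence of $\Box_2\theta$, which is unavailable without commutation.

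The same observation disposes of your ``delicate branch''. At an $R_1$-successor satisfying $\Box_1\neg\Box_2\chi_{n-1}$, the necessitated hypothesis still gives $\romb_1\Box_2\chi_{n-1}$, which is a contradiction. By contrast, your proposed relativization $\romb_1 q\to\romb_1(q\wedge(\Box_1 q\vee\Box_1\neg q))$ is not $\logic{S4.1}$-valid: take a two-point chain with $q$ true only at the root. Your fallback $\romb_1\Box_1\Box_2\chi_n$ does not avoid the issue either, since its base case already requires exactly this Nec-plus-McKinsey step.
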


\begin{proof}
    Since $\romb_1\Box_2(\romb_1 p \to \Box_1 p) \in \logic{L}$, so $\romb_1\Box_2(\romb_1 \varphi \to \Box_1 \varphi) \in \logic{L}$ for any $\varphi$. If we take $\psi_i = \romb_1 \varphi_i \to \Box_1 \varphi_i$, ($i = 1,\ldots, n$), then $\romb_1\Box_2 \psi_1,\ldots,\romb_1\Box_2 \psi_n \in \logic{L}$.
    
    By rule $(\text{Nec})$, $\Box_1\romb_1\Box_2 \psi_1, \Box_1\romb_1\Box_2 \psi_2 \in \logic{L}$. And since $\logic{S4.1}$ contains the McKinsey axiom $\Box\romb p \to \romb\Box p$, it follows that $\romb_1\Box_1\Box_2 \psi_2 \in \logic{L}$. Using that $(\Box p \wedge \romb q) \to \romb(p \wedge q) \in \logic{S4}$, rules $(\text{Sub})$ and  $(\text{MP})$ we get $\romb_1(\romb_1\Box_2 \psi_1 \wedge \Box_1\Box_2 \psi_2) \in \logic{L}$. 
    
    Using the same line of reasoning we get $\romb_1\romb_1(\Box_2 \psi_1 \wedge \Box_2 \psi_2)$.
    
    Since $\logic{S4.1}$ contains formula $\romb\romb p \to \romb p$, it follows that $\romb_1(\Box_2 \psi_1 \wedge \Box_2 \psi_2) \in \logic{L}$. Similarly applying the same reasoning to $\romb_1(\Box_2 \psi_1 \wedge \Box_2 \psi_2)$ and $\romb_1\Box_2 \psi_3$ and so on, we obtain $\romb_1(\Box_2 \psi_1 \wedge \ldots \wedge \Box_2 \psi_n) \in \logic{L}$.

\end{proof}

\begin{lemma}\label{lem:canonisity_of_L}
    Let $F_L = (W_L, R_L^{(1)}, R_L^{(2)})$ be a canonical frame of logic $\logic{L}$. Then it satisfies the condition $\forall x \exists y (x R_L^{(1)} y \ \& \ \forall z (y R_L^{(2)} z \Rightarrow R_L^{(1)} (z) = \set{z}))$ and its mirror version.
\end{lemma}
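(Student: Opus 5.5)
Fix $x\in W_L$. Let $\Psi=\setdef[\Box_2(\romb_1\varphi\to\Box_1\varphi)]{\varphi\in\MLog_2}$. The plan is to find $y$ with $xR^{(1)}_L y$ and $\Psi\subseteq y$, and then to show that every $z\in R^{(2)}_L(y)$ satisfies $R^{(1)}_L(z)=\set{z}$. The mirror condition is obtained in the same way, with the indices swapped and the second extra axiom of $\Lo$ used in place of the first.

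\emph{Step 1: existence of $y$.} We need the set $\Gamma=\setdef[A]{\Box_1A\in x}\cup\Psi$ to be $\Lo$-consistent; then any maximal consistent extension $y\supseteq\Gamma$ satisfies $xR^{(1)}_L y$ by the definition of the canonical relation. Suppose $\Gamma$ is inconsistent. Then there are $\Box_1A_1,\ldots,\Box_1A_m\in x$ and $\varphi_1,\ldots,\varphi_n$ such that
\[
\vdash_{\Lo} \bigwedge_j A_j\to\neg\bigwedge_{i}\Box_2(\romb_1\varphi_i\to\Box_1\varphi_i).
\]
By $(\text{Nec})$ and normality this gives $\Box_1\bigwedge_j A_j\to\neg\romb_1\bigwedge_i\Box_2(\romb_1\varphi_i\to\Box_1\varphi_i)$. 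The antecedent is in $x$, so the negated $\romb_1$-formula is in $x$. But Lemma \ref{fusion_prop_1} places the formula $\romb_1\bigwedge_i\Box_2(\ldots)$ itself in $\Lo\subseteq x$. This contradicts the consistency of $x$, so $\Gamma$ is consistent.

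\emph{Step 2: maximality of each $z$.} Let $yR^{(2)}_L z$. Since $\Psi\subseteq y$, we get $\romb_1\varphi\to\Box_1\varphi\in z$ for every formula $\varphi$. Reflexivity of the canonical relation (by canonicity of $\logic{S4}$) gives $z\in R^{(1)}_L(z)$. Now suppose $zR^{(1)}_L w$ with $w\neq z$. Choose a formula $\varphi\in w\setminus z$; one exists because both are distinct maximal consistent sets. Then $\romb_1\varphi\in z$, since otherwise $\Box_1\neg\varphi\in z$ would force $\neg\varphi\in w$. The formula $\romb_1\varphi\to\Box_1\varphi$ lies in $z$, so $\Box_1\varphi\in z$, and by reflexivity ($\Box_1\varphi\to\varphi\in\Lo$) we get $\varphi\in z$, a contradiction. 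Hence $R^{(1)}_L(z)=\set{z}$.

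The main obstacle is Step 1. Compactness has to be combined with the closure of the extra axiom under finite conjunctions inside $\romb_1$, and this is exactly what Lemma \ref{fusion_prop_1} supplies. Once that lemma is available, the remaining work is routine canonical-model bookkeeping.
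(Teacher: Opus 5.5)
Your proposal is correct and follows the paper's proof essentially step for step. It uses the same set $\{A \mid \Box_1 A\in x\}\cup\Psi$, proves its consistency via (Nec), normality and Lemma~\ref{fusion_prop_1}, and takes a Lindenbaum extension as $y$. It then shows, as the paper does, that an $R^{(1)}_L$-successor $w\neq z$ would yield a formula $\varphi$ for which $\romb_1\varphi\to\Box_1\varphi$ fails at $z$. Your explicit appeal to reflexivity for $z\in R^{(1)}_L(z)$ and your direct choice of $\varphi\in w\setminus z$ are only slightly tidier versions of the paper's steps.
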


\begin{proof}
    Let $\Gamma_0 \in W_L$. Consider
    \[
    \Gamma' = \set{\varphi \ |\ \Box_1\varphi\in\Gamma_0}\cup\set{\Box_2(\romb_1\varphi\to\Box_1\varphi)\ |\ \varphi\ \text{is a modal formula}}.
    \]
    We prove that $\Gamma'$ is $\widetilde{L}$-consistent. Suppose the contrary. Then
    \[
    \varphi, \Box_2(\romb_1\varphi_1\to\Box_1\varphi_1), \hdots, \Box_2(\romb_1\varphi_n\to\Box_1\varphi_n) \vartriangleright_{\widetilde{L}} \bot,
    \]
    for some $\Box_1\varphi\in\Gamma_0$ and formulas $\varphi_1, \hdots, \varphi_n$. 
    Here, a single formula $\varphi$ is sufficient, since if a derivation were to use several formulas $\psi_1, \ldots, \psi_m$ such that $\Box_1\psi_1\in\Gamma_0, \ldots, \Box_1\psi_m\in\Gamma_0$, then we could simply set $\displaystyle\varphi = \bigwedge_{i=1}^m \psi_i$. 
    The formula $\displaystyle \Box_1\bigwedge_{i=1}^m \psi_i$ then belongs to $\Gamma_0$ by the properties of the canonical model (Theorem \ref{canon_model_prop}). By the Deduction Theorem and the rule $( \text{Nec} )$, we have:
    \[
    \vartriangleright_{\widetilde{L}} \Box_1\varphi\to\Box_1\neg\bigwedge_{i=1}^n\Box_2(\romb_1\varphi_i\to\Box_1\varphi_i).
    \]
    Hence,
    \[
    \neg\romb_1\bigwedge_{i=1}^n\Box_2(\romb_1\varphi_i\to\Box_1\varphi_i) \in \Gamma_0.
    \]
    Contradiction with Lemma \ref{fusion_prop_1}.
 
    Now, we take a maximal $\widetilde{L}$-consistent extension of $\Gamma'$, which exists by the Lindenbaum Lemma.
    We will call it $\Gamma_1$. Clearly, $\Gamma_0 R_L^{(1)} \Gamma_1$. We show that $\Gamma_1$ is exactly the point $y$ we were looking for by contradiction. Suppose there exist $\Gamma_2, \Gamma_3 \in W_L \ :\ \Gamma_2 \neq \Gamma_3$, $\Gamma_1R_L^{(2)}\Gamma_2$ and $\Gamma_2R_L^{(1)}\Gamma_3$ (That is, from $\Gamma_1$ via $R_L^{(2)}$ we can reach a point that is not isolated with respect to $R_L^{(1)}$). Let $\varphi \in \Gamma_3 \setminus \Gamma_2$ (if instead $\varphi \in \Gamma_2 \setminus \Gamma_3$, replace $\varphi$ by $\neg\varphi$). Then, due to the reflexivity of $R_L^{(1)}$, we have $\romb_1\varphi\in\Gamma_2$ and $\Box_1\varphi\notin\Gamma_2$. But $\Box_2(\romb_1\varphi\to\Box_1\varphi)\in\Gamma_1$. Hence, $\romb_1\varphi\to\Box_1\varphi \in \Gamma_2$. This leads to contradiction.
    
    The proof of the mirror version of this property is analogous.
\end{proof}





\begin{theorem}
	The logic $ \Lo$ is canonical and, consequently, Kripke-complete.
\end{theorem}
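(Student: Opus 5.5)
To show that $\Lo$ is canonical, I will check that the canonical frame $F_\Lo = (W_\Lo, R^{(1)}_\Lo, R^{(2)}_\Lo)$ validates every axiom of $\Lo$. Validity in a frame is preserved under Sub, MP and Nec, so this gives $F_\Lo \models \Lo$. The axioms fall into three groups: the $\logic{S4}$ axioms for each modality, the McKinsey axiom for each modality, and the two interaction axioms $\romb_1\Box_2(\romb_1 p \to \Box_1 p)$ and $\romb_2\Box_1(\romb_2 p \to \Box_2 p)$.

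\textbf{Step 1 ($\logic{S4}$ part).} The axioms $\Box_i p \to p$ and $\Box_i p\to\Box_i\Box_i p$ are Sahlqvist formulas, and the standard canonical-model argument shows that each $R^{(i)}_\Lo$ is reflexive and transitive. So $F_\Lo$ is an $\logic{S4}\ast\logic{S4}$-frame.

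\textbf{Step 2 (interaction axioms).} Lemma~\ref{lem:canonisity_of_L} shows that $F_\Lo$ satisfies the first-order condition $\forall x \exists y (x R^{(1)} y \ \&\ \forall z (y R^{(2)} z \Rightarrow R^{(1)}(z)=\set{z}))$ and its mirror version. By the easy (right-to-left) direction of Lemma~\ref{lem:MKDuo_prop}, this condition implies validity of $\romb_1\Box_2(\romb_1 p\to\Box_1 p)$ in any reflexive frame. The reason is that at a point $z$ with $R^{(1)}(z)=\set{z}$, the formula $\romb_1 p\to\Box_1 p$ holds under every valuation. The mirror axiom is handled the same way.

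\textbf{Step 3 (McKinsey axiom).} Here I will not use the general (non-)canonicity of $\logic{S4.1}$. Instead I derive the McKinsey frame condition from Step 2. Take any $x$ and apply the condition with $z=y$, using reflexivity of $R^{(2)}$. This gives $y$ with $x R^{(1)} y$ and $R^{(1)}(y)=\set{y}$, which is exactly the condition of Lemma~\ref{lem:McKinseyProperty} for $R^{(1)}$. By that lemma, $\Box_1\romb_1 p\to\romb_1\Box_1 p$ is valid in $(W_\Lo,R^{(1)}_\Lo)$, since that frame is an $\logic{S4}$-frame by Step 1. The mirror condition gives the same for $R^{(2)}$.

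Combining Steps 1–3, $F_\Lo\models\Lo$, so $\Lo$ is canonical, and Kripke completeness follows from the cited theorem that canonical logics are Kripke complete. The substantive work, the existence of the witness $\Gamma_1$, is already done in Lemma~\ref{lem:canonisity_of_L}. The one step needing care is Step 3: the McKinsey axiom alone is not known to be canonical, so the argument must obtain the maximal points from the interaction condition rather than from the McKinsey axiom itself.
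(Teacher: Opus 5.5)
Your proof is correct. Steps 1 and 2 match the paper: canonicity of reflexivity and transitivity, then Lemma~\ref{lem:canonisity_of_L} together with the easy direction of Lemma~\ref{lem:MKDuo_prop} for the two interaction axioms. The real difference is Step 3. The paper handles the McKinsey axioms by citing the standard fact that $A1$ is canonical in the presence of transitivity. You instead read the maximal point off the interaction condition by taking $z=y$ and using reflexivity of $R^{(2)}_{\Lo}$.

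Your route is self-contained, and read syntactically it shows that the McKinsey axioms are redundant in $\Lo$. From $\romb_1\Box_2(\romb_1 p\to\Box_1 p)$ and $\Box_2 q\to q$ one derives $\romb_1(\romb_1 p\to\Box_1 p)$, which is interderivable with $A1$ over $\logic{S4}$. Hence $\Lo$ is just $\logic{S4}\ast\logic{S4}$ plus the two interaction axioms. The paper's route is more modular: it treats the fusion $\logic{S4.1}\ast\logic{S4.1}$ independently of the new axioms.

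Your closing justification is inaccurate, however. $A1$ is known to be non-canonical over $\logic{K}$ (Goldblatt), but over $\logic{K4}$ it is canonical, and $\logic{S4.1}$ is a canonical logic. The proof is the one-modality version of Lemma~\ref{lem:canonisity_of_L}: the set $\{\varphi\mid\Box_1\varphi\in\Gamma\}\cup\{\romb_1\psi\to\Box_1\psi\mid\psi\text{ a formula}\}$ is consistent by $A1$ and transitivity. That argument works in any normal bimodal logic containing these axioms for $\Box_1$, in particular in $\Lo$. So your Step 3 is a convenient shortcut, not something the situation forces.
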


\begin{proof}
    The logic $ \logic{S4.1} \ast \logic{S4.1} $ is canonical, since the axioms of reflexivity and transitivity are canonical (formula $A$ is canonical if $A \in L \implies F_L \models A$), and the McKinsey axiom is canonical in the presence of the transitivity axiom 
    (cf. \cite{chagrov_zakharyaschev}).
    
    The axioms $\romb_1\Box_2(\romb_1 p \to \Box_1 p)$ and $\romb_2\Box_1(\romb_2 p \to \Box_2 p$ are canonical due to Lemma \ref{lem:canonisity_of_L}.
\end{proof}


\section{Finite model property and decidability of $\Lo$}
\begin{definition}
   A logic has the \emph{finite model property} (FMP), if it is the logic of some class of finite frames. 
\end{definition}

\begin{theorem}(Harrop, \cite{chagrov_zakharyaschev})
	If a logic is finitely axiomatizable and has the FMP, it is decidable.
\end{theorem}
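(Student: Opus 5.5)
This is Harrop's theorem, which the paper cites from \cite{chagrov_zakharyaschev}; I will only recall the standard argument. The idea is to run two semi-decision procedures in parallel for a given formula $A$. One enumerates the theorems of the logic, and the other enumerates finite frames validating the logic and checks whether $A$ fails in one of them.

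\textbf{Step 1: theorems are enumerable.} Let $\logic{L}$ be axiomatized by finitely many axioms over $\logic{K}_N$. Then the set of derivations using Sub, MP and Nec is recursively enumerable, and so is the set of theorems. If $A\in\logic{L}$, listing derivations will eventually produce $A$.

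\textbf{Step 2: non-theorems are enumerable.} Enumerate all finite Kripke $N$-frames $F$ up to isomorphism. For each one, decide whether $F\models \logic{L}$. This is the key point where finite axiomatizability is used: it suffices to check the finitely many axioms, because validity in a frame is preserved under Sub, MP and Nec. Checking one axiom means running through the finitely many valuations of its variables on the finite set $W$ and all points, which is a finite computation. For every such $F$, similarly check whether $F\not\models A$.

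\textbf{Step 3: correctness.} By the FMP, $\logic{L}=Log(\mathcal C)$ for some class $\mathcal C$ of finite frames. If $A\notin\logic{L}$, then some $F\in\mathcal C$ refutes $A$; this $F$ validates $\logic{L}$ and appears in the enumeration, so the search halts. Conversely, if the search finds a finite $\logic{L}$-frame refuting $A$, then $A\notin\logic{L}$ by soundness. Running both procedures in parallel therefore halts on every input with the correct answer, so $\logic{L}$ is decidable.

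The only step needing care is Step 2. The point is that we test $F\models\logic{L}$ through the finite axiom set rather than through the (infinite) logic. This is also where the hypothesis "some class of finite frames" is converted into "all finite $\logic{L}$-frames": every frame in $\mathcal C$ is a finite $\logic{L}$-frame, and every finite $\logic{L}$-frame is sound for $\logic{L}$.
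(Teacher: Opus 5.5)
Your argument is correct and is the standard proof of Harrop's theorem as given in the cited source (Chagrov--Zakharyaschev); the paper does not reproduce a proof and only cites the theorem. You also handle the one delicate point correctly: finite $\logic{L}$-frames are recognized by checking the finitely many axioms, so the arbitrary (possibly non-enumerable) class $\mathcal C$ from the FMP never has to be enumerated.
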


The logic $\logic{L}$ is obviously finitely axiomatizable. In the following we prove that it has the FMP.

\begin{definition}
    Let $\Psi$ be a set of formulas, closed under subformulas and let $M$ be a Kripke model. We define a relation on $M$:
    \begin{align*}
		x \equiv_{\Psi} y &\iff \forall A \in \Psi (M,x \models A \Leftrightarrow M,y \models A).
	\end{align*}
\end{definition}

\begin{proposition}
    The relation $\equiv_{\Psi}$ is an equivalence relation.
\end{proposition}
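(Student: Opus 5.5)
The plan is to check the three defining properties of an equivalence relation directly from the definition of $\equiv_{\Psi}$. The key observation is that $x \equiv_{\Psi} y$ says exactly that the two sets $\setdef[A \in \Psi]{M,x \models A}$ and $\setdef[A \in \Psi]{M,y \models A}$ are equal. Equality of sets is reflexive, symmetric and transitive, so the claim follows. Closure of $\Psi$ under subformulas plays no role here; it will only matter later, for the filtration argument.

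\emph{Reflexivity.} For any $x$ and any $A \in \Psi$, the biconditional $M,x \models A \Leftrightarrow M,x \models A$ is trivially true. Hence $x \equiv_{\Psi} x$.

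\emph{Symmetry.} Suppose $x \equiv_{\Psi} y$. For each $A \in \Psi$ we have $M,x \models A \Leftrightarrow M,y \models A$. Since a biconditional is symmetric, $M,y \models A \Leftrightarrow M,x \models A$, which gives $y \equiv_{\Psi} x$.

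\emph{Transitivity.} Suppose $x \equiv_{\Psi} y$ and $y \equiv_{\Psi} z$. Fix $A \in \Psi$. Then $M,x \models A \Leftrightarrow M,y \models A$ and $M,y \models A \Leftrightarrow M,z \models A$. Chaining the two biconditionals gives $M,x \models A \Leftrightarrow M,z \models A$. As $A$ was arbitrary, $x \equiv_{\Psi} z$.

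No step is a real obstacle; the only care needed is to quantify over all $A \in \Psi$ in each case.
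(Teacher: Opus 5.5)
Your proof is correct: the paper states this proposition without proof, and your direct check of reflexivity, symmetry and transitivity is the standard argument it leaves implicit. Equivalently, as you note, $x \equiv_{\Psi} y$ holds exactly when $x$ and $y$ have the same image under $x \mapsto \setdef[A \in \Psi]{M,x \models A}$. Any relation of that form is an equivalence, and closure of $\Psi$ under subformulas is indeed not needed here.
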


\begin{definition}
    Let $M = (W, R_1, R_2, V)$ be a Kripke model, $\Psi$ be a set of formulas closed under subformulas and $\equiv$ be an equivalence relation on $W$ such that $\equiv \subseteq \equiv_\Psi$. Then a model $M'=(W', R'_1, R'_2, V')$ is called \emph{filtration of $M$ through $\equiv$} if
    \begin{enumerate}
    \item $W' = W /_{\equiv}=\{[x] \; |\; x\in W\}$;
    
    \item $xR_iy \implies [x] R'_i [y]$ (for $i\in \set{1,2}$);

    \item if $\Box A \in \Psi$, $M,x \models \Box_i A$ and $[x]R'_i[y]$, then $M, y \models A$ (for $i\in \set{1,2}$);

    \item for any propositional variable $p \in \Psi$, we have $x\in V(p) \Leftrightarrow [x] \in V'(p)$.
    \end{enumerate}
\end{definition}

\begin{lemma} (Filtration lemma, see \cite{kudinov_shapirovsky})
    Let $M'$ be a filtration of $M$ through $\equiv$ such that $\equiv \subseteq \equiv_{\Psi}$, then
    \[
    \forall A \in \Psi \;\forall x \in W (M, x \models A \Leftrightarrow M', [x] \models A).
    \]
\end{lemma}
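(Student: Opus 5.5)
This is the standard filtration lemma, and I will prove it by induction on the structure of $A \in \Psi$, keeping $x \in W$ universally quantified. Because $\Psi$ is closed under subformulas, every subformula of $A$ lies in $\Psi$, so the induction hypothesis is available for them. The statement is only meaningful once $[x]$ is well defined; this holds because $\equiv$ is an equivalence relation by assumption.

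\textbf{Base and Boolean cases.} For $A = p$ a propositional variable in $\Psi$, the claim is exactly clause~4 of the definition of filtration. For $A = \bot$, both sides are false. For $A = B \to C$, both $B$ and $C$ lie in $\Psi$, and the truth of an implication at a point depends only on the truth of $B$ and $C$ at that same point, so the induction hypothesis immediately gives the equivalence.

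\textbf{Modal case $A = \Box_i B$, with $B \in \Psi$.} I treat the two directions separately.

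\emph{($\Rightarrow$)} Suppose $M, x \models \Box_i B$. Take any class with $[x] R'_i [y]$. By clause~3, $M, y \models B$, and the induction hypothesis turns this into $M', [y] \models B$. One detail needs care: the class $[y]$ may be named by a different representative. Clause~3 is stated for an arbitrary $y$ with $[x]R'_i[y]$, so every representative of the class qualifies and $B$ holds at all of them. The induction hypothesis applies to each of them. Hence $M', [x] \models \Box_i B$.

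\emph{($\Leftarrow$)} Suppose $M', [x] \models \Box_i B$, and let $x R_i y$. By clause~2, $[x] R'_i [y]$, so $M', [y] \models B$. The induction hypothesis gives $M, y \models B$. Hence $M, x \models \Box_i B$.

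The only delicate point is the one already handled in ($\Rightarrow$): the truth of $\Box_i B$ at $x$ must transfer to other members of $[x]$. This is guaranteed by $\equiv \subseteq \equiv_\Psi$ together with $\Box_i B \in \Psi$, and that same fact is what makes clause~3 independent of the chosen representative. Everything else in the argument is routine.
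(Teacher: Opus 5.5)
Your proof is correct. The paper gives no proof of its own and only cites the reference, and your induction on $A \in \Psi$ (clause 4 for variables, clause 3 for $(\Rightarrow)$ and clause 2 for $(\Leftarrow)$ in the $\Box_i$ case) is the standard argument behind that citation. The representative issue you flag as delicate needs no separate treatment, because clause 3 and the induction hypothesis are both quantified over every point of $W$; the assumption $\equiv \subseteq \equiv_\Psi$ is needed only to make clause 4 well defined.
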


\begin{definition}
A filtration $M' = (W', R', V')$ is called the \emph{minimal filtration} if
\[
[x] R'_i [y] \quad \text{iff} \quad \exists x' \in [x] \, \exists y' \in [y] \; (x' R_i y') \quad\hbox{(for $i\in \set{1,2}$)}.
\]
\end{definition}

The transitivity property may not be preserved in a minimal filtration, so we need to take the transitive closure of the relation.
Let \( M' = (W', R'_1, R'_2, V') \) be the minimal filtration of a model \( M = (W, R_1, R_2, V) \) through \(\equiv\). We take \( M'' = (W', R_1'', R_2'', V') \), where \( R_i'' = (R_i')^* \) (a transitive closure).

\begin{lemma}
The model \( M'' \) is a filtration of \( M \) through \(\equiv\).
\end{lemma}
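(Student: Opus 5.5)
We check the four clauses in the definition of a filtration for $M''$ one at a time. The underlying set is $W/_{\equiv}$ and the valuation is $V'$, both unchanged from $M'$, so clauses 1 and 4 are inherited from $M'$. For clause 2, since $M'$ is a filtration we have $xR_iy \Rightarrow [x]R'_i[y]$, and $R'_i \subseteq (R'_i)^* = R''_i$, so $[x]R''_i[y]$ follows.

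The substantive clause is 3. Suppose $\Box_i A \in \Psi$, $M,x\models \Box_i A$ and $[x]R''_i[y]$, and we must show $M,y\models A$. Here we use that $R_i$ is transitive, since $M$ is a model over an $\Lo$-frame and hence $R_i$ is reflexive and transitive. By the definition of transitive closure there is a chain
\[
[x]=[x_0]\,R'_i\,[x_1]\,R'_i\,\cdots\,R'_i\,[x_n]=[y].
\]
We prove by induction on $k$ the stronger statement that $M,x_k\models \Box_i A$ for every $k\le n$; the fact that $\Box_i A$ persists along the chain is the key point.

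\begin{itemize}
\item \emph{Base case.} $M,x_0\models\Box_i A$ holds because $x_0 \equiv x$, $\equiv\subseteq\equiv_\Psi$ and $\Box_iA\in\Psi$.
\item \emph{Inductive step.} By minimality of $M'$, the relation $[x_k]R'_i[x_{k+1}]$ gives $x'\in[x_k]$ and $y'\in[x_{k+1}]$ with $x'R_iy'$. Since $\Box_iA\in\Psi$ and $x'\equiv_\Psi x_k$, we get $M,x'\models\Box_iA$. By transitivity of $R_i$ (the S4 axiom $\Box_i p\to\Box_i\Box_i p$), $M,y'\models\Box_iA$. Then $y'\equiv x_{k+1}$ transfers this to $x_{k+1}$.
\end{itemize}

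Finally, $M,y\models\Box_iA$ together with reflexivity of $R_i$ gives $M,y\models A$. When $n=0$, i.e. if the reflexive-transitive closure is used, this same reflexivity step applies directly to $y \equiv x$.

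The only obstacle is clause 3. It needs two things: $\Box_iA$ itself lies in $\Psi$, so that it is preserved by $\equiv$, which holds by hypothesis; and $M$ is reflexive and transitive, which must be assumed (or is given since $\Lo \supseteq \logic{S4}\ast\logic{S4}$).
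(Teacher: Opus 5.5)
Your proof is correct. The paper states this lemma without proof, treating it as the standard fact about transitive filtrations of transitive models, and your chain argument is exactly the intended one: you propagate $\Box_i A$ along $[x_0]R'_i[x_1]R'_i\cdots R'_i[x_n]$ through witnesses $x'R_iy'$, using $\equiv\subseteq\equiv_\Psi$, $\Box_iA\in\Psi$, and the transitivity of $R_i$ (which holds because $F\models\Lo$).

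The one thing you overstate is the need for reflexivity. For chains with $n\ge 1$, the last link already gives $M,y'\models A$, and you can transfer this to $y$ because $A\in\Psi$ by subformula closure. So reflexivity is needed only in the $n=0$ case, when $(R'_i)^*$ is read as the reflexive--transitive closure, and it is available here anyway.
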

\begin{definition}
Model $M''$ is called the \emph{transitive filtration} of $M$ through $\equiv$.	
\end{definition}

\begin{theorem}
	The logic $ \Lo $ has FMP and decidable.
\end{theorem}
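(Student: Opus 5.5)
Since $\Lo$ is finitely axiomatizable, Harrop's theorem reduces decidability to the FMP, so the main task is: for every formula $A\notin\Lo$, find a finite $\Lo$-frame refuting $A$. By canonicity, $A$ is refuted at some point of the canonical model $M_\Lo$. This model is reflexive and transitive in both relations, satisfies the McKinsey condition of Lemma~\ref{lem:McKinseyProperty} for $R_1$ and $R_2$, and satisfies the conditions of Lemma~\ref{lem:MKDuo_prop} (by Lemma~\ref{lem:canonisity_of_L}). I would take $\Psi$ to be the (finite) closure of $\{A\}$ under subformulas and form the transitive filtration $M''$ of $M_\Lo$ through $\equiv_\Psi$. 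By the filtration lemma $M''$ still refutes $A$. It is finite, with at most $2^{|\Psi|}$ points, and both $R_1''$ and $R_2''$ are reflexive and transitive. What remains is to check the McKinsey conditions and the two new axioms on the finite frame.

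The difficulty is that filtration does not preserve maximal points: a class $[z]$ with $R_1(z)=\{z\}$ may still see other classes under $R_1''$, because some other member of $[z]$ has $R_1$-successors. So I would not use the raw equivalence $\equiv_\Psi$. Instead I would refine it so that the relevant maximal points become recognizable. The plan is to split each $\equiv_\Psi$-class according to a finite amount of extra information, namely whether the point is $R_1$-maximal, $R_2$-maximal, or both.

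The cleaner way to achieve this is to enlarge $\Psi$ with finitely many formulas. In the canonical model I would choose fresh variables $q_1,q_2$ and take the variable-free surrogate $\romb_1\top\wedge\ldots$. This does not define maximality, however, since maximality is not modally definable. So the better route is to filter directly through the equivalence
\[
x\equiv y \iff x\equiv_\Psi y \ \&\ \bigl(R_1(x)=\{x\}\Leftrightarrow R_1(y)=\{y\}\bigr)\ \&\ \bigl(R_2(x)=\{x\}\Leftrightarrow R_2(y)=\{y\}\bigr).
\]
This relation still has finite index and is contained in $\equiv_\Psi$. I would then take a modified transitive filtration in which, for classes consisting of $R_i$-maximal points, the relation $R_i''$ is defined to be only reflexive; for all other classes it is the transitive closure of the minimal filtration. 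Conditions 2 and 4 of the filtration definition are immediate. Condition 3 needs the observation that such a class $[z]$ gains no $R_i$-successors from its members, since every member is maximal; we must also ensure that transitive-closure chains from other classes entering $[z]$ stop there, which holds because $[z]$ has no outgoing edges other than the loop.

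With this filtration, the frame conditions transfer directly from $M_\Lo$. For McKinsey, given $[x]$, pick an $R_1$-maximal $u$ with $xR_1u$; then $[x]R_1''[u]$ and $[u]$ is $R_1''$-maximal. For Lemma~\ref{lem:MKDuo_prop}, given $[x]$, take $y=\mu(x)$. Any $[w]$ with $[y]R_2''[w]$ is reached by a chain $y R_2 w_1\equiv w_1' R_2 w_2\equiv\cdots$. The main obstacle is exactly this step: along such a chain, a member of an intermediate class need not lie in $R_2(y)$, so maximality of $R_1$ does not obviously propagate. To handle it, I would put a further refinement into $\equiv$, recording whether $x$ satisfies ``all $R_2$-successors are $R_1$-maximal'' (and its mirror). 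This property is $R_2$-upward closed, so classes with it can only reach classes with it along $R_2'$-edges, and every point in those classes is $R_1$-maximal. Finally, Lemma~\ref{lem:MKDuo_prop} applied to the finite frame shows that the axiom holds, so the frame is an $\Lo$-frame refuting $A$. This gives the FMP, and decidability follows by Harrop's theorem.
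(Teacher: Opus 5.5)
Your proposal is correct and follows essentially the paper's route. You filter an $\Lo$-countermodel through $\equiv_\Psi$ intersected with the $R_1$/$R_2$-maximality type, take the transitive filtration, and transfer the McKinsey and $\mu$-conditions; your ``modified'' filtration coincides with the ordinary transitive one, since in the minimal filtration a class of $R_i$-maximal points has no $R_i'$-edges to other classes. Your extra refinement by ``all $R_2$-successors are $R_1$-maximal'' is sound but redundant: under the condition of Lemma~\ref{lem:MKDuo_prop}, an $R_1$-maximal point $v$ must have $\mu(v)=v$, so that property already coincides with $R_1$-maximality, and this is exactly what justifies the step the paper dismisses with ``similarly''.
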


\begin{proof}
        Let $ A $ be a formula and let $ M = (F,V) $ be a model such that $ F\models \Lo $, $ M \not\models A $ and $ F = (W, R_1, R_2) $. 
	We introduce a partitioning of $ W $ into four subsets:
	$
	W = W_1 \sqcup W_2 \sqcup W_3 \sqcup W_4,
	$
	where
	\begin{itemize}
		\item $ W_1 $ is the set of all $ R_1 $-maximal, but not $R_2$-maximal points,
		\item $W_2$ is the set of all $R_2$-maximal, but not $R_1$-maximal points,
		\item $W_3$ is the set of all points, maximal with respect to both $R_1$ and $R_2$,
		\item $ W_4 = W \setminus (W_1 \sqcup W_2 \sqcup W_3)$.
	\end{itemize}  
	
	Let $\Psi$ be the set of all subformulas of formula $A$. 
	We define a relation $ \equiv $ as follows:
\[ 		
x \equiv y \iff x \equiv_\Psi y \ \hbox{and}\ \exists i \in \set{1, 2, 3, 4} (x,y \in W_i).
 \]	
	Let $ M' = (F', V')$ be the transitive filtration of model $ M $ through relation $ \equiv $. Reflexivity and transitivity are preserved in a transitive filtration.

    We show that in $F'$ for each $i \in\set{1,2}$, it holds that 
    $$
    \forall w \in W' \exists u \in W' (w R'_i u \land R'_i(u) = \set{u}).
    $$ 
    i.~e., the McKinsey axiom is valid (by Lemma \ref{lem:McKinseyProperty}). Let $[x] \in W'$. Since the original frame satisfies this property, there exists $y \in W$ such that $x R_i y \land R_i(y) = \set{y}$. By the definition of the filtration, we have $[x]R'[y]$. The transitive filtration is a transitive closure of the minimal filtration, and all the $z \in [y]$ are maximal with respect to $R_i$ by the definition of $\equiv$. Hence, $[y]$ is a maximal point with respect to $R'_i$.

    Similarly, one can show that in $F'$ we have $\forall x \exists y (x R_1 y \ \text{ and } \ \forall z (y R_2 z \Rightarrow R_1 (z) = \set{z}))$, i.~e., that $\romb_1\Box_2(\romb_1 p \to \Box_1 p)$ is valid (by Lemma \ref{lem:MKDuo_prop}). The same applies to the mirror image of this formula.
    
    Therefore, $ F' \models \Lo $. By the Filtration Lemma, we have $ M' \not\models A $, and $ M' $ is finite. Thus, FMP is established. 
\end{proof}

\section{Kripke frame for $\Lo$}

Let $ \mathbb{T}_{2,2} $ denote  the infinite transitive $(2,2)$-tree with two relations: $ \mathbb{T}_{2,2} = (T_{2,2}, R_1, R_2) $, where $T_{2,2} = \set{a_1, a_2, b_1, b_2}^*$ is the set of all finite words over the four-letter alphabet. For any two words $ \vec a, \vec b \in  T_{2,2}$,
\begin{align*}
	\vec a R_1 \vec b &\Leftrightarrow \hbox{there exists $ \vec c\in \set{a_1, a_2}^* $ such that} \ \vec b  = \vec a \cdot \vec c;\\
	\vec a R_2 \vec b &\Leftrightarrow \hbox{there exists $ \vec d\in \set{b_1, b_2}^*$ such that}\ \vec b = \vec a \cdot \vec d.
\end{align*} 
From now on, the dot “$\cdot$” denotes the concatenation of words.

Let $ \mathbb{\widetilde{T}} = (W, R_1', R_2')$ denote a Kripke frame, obtained from $ \mathbb{T}_{2,2}$ in two steps.

\textbf{Step 1.} ``Attaching'' to each point two copies of the infinite binary tree $T_2^{(1)} = (\set{a_1,a_2}^\ast, \sqsubseteq)$ and $T_2^{(2)} = (\set{b_1,b_2}^\ast, \sqsubseteq)$, where $\sqsubseteq$ denotes the ``prefix relation''. At every point the first copy $T_2^{(1)}$ is ``attached'' via the $ R'_2 $ relation, and inside it the points are accessible via the $R'_1$ relation. We denote by $ \eps^{(1)}$ the empty word, which serves as the root of this copy. Conversely, the second copy $T_2^{(2)}$ is ``attached'' via $ R'_1 $ relation, and inside it the points are accessible via $R'_2$ relation. We denote by $ \eps^{(2)}$ the empty word, which serves as the root of this copy.

\textbf{Step 2.} On every ``attached'' copy of tree $T_2^{(1)}$ to each of its points we ``attach'' via $R'_1$ a point which is maximal with respect to both relations.  On every ``attached'' copy of tree $T_2^{(2)}$ to each of its points we ``attach'' via $R'_2$ a point which maximal with respect to both relations.

Formally, the definition is as follows:
\begin{enumerate}

	\item $\Sigma = \set{a_1,a_2,b_1,b_2}$, $\Sigma_a = \set{a_1,a_2}$, $\Sigma_b = \set{b_1,b_2}$,
	
	\item $T_{2,2}$ is the (2,2)-tree over the alphabet $\Sigma$,
	
	\item $T_2^{(1)} =  (\Sigma_a^\ast, \sqsubseteq)$ is the infinite binary tree over the alphabet $\Sigma_a$,
	
	\item $T_2^{(2)} =  (\Sigma_b^\ast, \sqsubseteq)$ is the infinite binary tree over the alphabet $\Sigma_b$.

	\item\label{it:i1}  $ W = T_{2,2} \times \set{\rc} \sqcup T_{2,2} \times T_2^{(1)} \times \set{\bco, \mco} \sqcup  T_{2,2} \times T_2^{(2)} \times \set{\bct, \mct}$, where $\rc$ is a special new symbol representing a point in the main tree, $\bco$ and $\bct$ are special new symbols representing points in the attached trees $T_2^{(1)}$ and $T_2^{(2)}$ respectively (step 1), $\mco$ and $\mct$ are special new symbols representing attached maximal points (step 2).
	\item\label{it:i2} 
	$\forall \vec a, \vec a' \in T_{2,2}\quad \vec a R_1 \vec a' \Rightarrow \pair{\vec a, \rc} R'_1 \pair{\vec a', \rc}$;
	\item\label{it:i3} 
	$\forall \vec a \in T_{2,2}\quad \pair{\vec a, \rc} R'_1 \pair{\vec a, \eps^{(2)}, \bct} $;
        \item\label{it:i4} 
        $\forall \vec a \in T_{2,2}\;\forall \vec b,\vec b' \in T_2^{(1)}$ $\left(\vec b \sqsubseteq \vec b' \Rightarrow \pair{\vec a, \vec b, \bco} R'_1 \pair{\vec a, \vec b', \bco}\right)$
        \item\label{it:i5}
        $\forall \vec a \in T_{2,2}\;\forall \vec b \in T_2^{(1)}\quad\pair{\vec a, \vec b, \bco} R'_1 \pair{\vec a, \vec b, \mco}$
	\item $R'_1$ is the minimal reflexive and transitive relation satisfying items \ref{it:i2}, \ref{it:i3}, \ref{it:i4} and \ref{it:i5};
	\item\label{it:i6}
	$\forall \vec a, \vec a' \in T_{2,2}\quad \vec a R_2 \vec a' \Rightarrow \pair{\vec a, \rc} R'_2 \pair{\vec a', \rc}$;
        \item\label{it:i7} 
        $\forall \vec a \in T_{2,2}\quad \pair{\vec a, \rc} R'_2 \pair{\vec a, \eps^{(1)}, \bco} $;
	\item\label{it:i8} 
        $\forall \vec a \in T_{2,2}\;\forall \vec b,\vec b' \in T_2^{(2)}$ $\left(\vec b \sqsubseteq \vec b' \Rightarrow \pair{\vec a, \vec b, \bct} R'_2 \pair{\vec a, \vec b', \bct}\right)$
         \item\label{it:i9}
        $\forall \vec a \in T_{2,2}\;\forall \vec b \in T_2^{(2)}\quad\pair{\vec a, \vec b, \bct} R'_2 \pair{\vec a, \vec b, \mct}$
	\item  $R'_2$ is the minimal reflexive and transitive relation satisfying items \ref{it:i6}, \ref{it:i7}, \ref{it:i8} and \ref{it:i9}.
\end{enumerate}


On the following picture we depicted the bottom of frame $\mathbb{T}_{2,2}$ without the attached trees.

\begin{center}
\begin{tikzpicture}[scale=0.2]
\tikzstyle{every node}+=[inner sep=0pt]
\draw [black] (29.7,-48.8) circle (3);
\draw (29.7,-48.8) node {$\eps$};
\draw [black] (19.7,-33.2) circle (3);
\draw (19.7,-33.2) node {$b_1$};
\draw [black] (46.2,-37.4) circle (3);
\draw (46.2,-37.4) node {$a_1$};
\draw [black] (65.8,-23.9) circle (3);
\draw (65.8,-23.9) node {$a_1a_1$};
\draw [black] (69.9,-30) circle (3);
\draw (69.9,-30) node {$a_1a_2$};
\draw [black] (3.7,-22.8) circle (3);
\draw (3.7,-22.8) node {$b_1b_1$};
\draw [black] (7.2,-16.5) circle (3);
\draw (7.2,-16.5) node {$b_1b_2$};
\draw [black] (34.7,-30) circle (3);
\draw (34.7,-30) node {$b_2$};
\draw [black] (43.8,-7.6) circle (3);
\draw (43.8,-7.6) node {$b_2a_1$};
\draw [black] (49.9,-11.6) circle (3);
\draw (49.9,-11.6) node {$b_2a_2$};
\draw [black] (49.9,-53.2) circle (3);
\draw (49.9,-53.2) node {$a_2$};
\draw [black] (76.1,-49.5) circle (3);
\draw (76.1,-49.5) node {$a_2a_1$};
\draw [black] (74.4,-55.8) circle (3);
\draw (74.4,-55.8) node {$a_2a_2$};
\draw [black] (28.7,-5.1) circle (3);
\draw (28.7,-5.1) node {$b_2b_1$};
\draw [black] (36.2,-5.1) circle (3);
\draw (36.2,-5.1) node {$b_2b_2$};
\draw [black] (14.3,-9.2) circle (3);
\draw (14.3,-9.2) node {$b_1a_1$};
\draw [black] (22,-7.6) circle (3);
\draw (22,-7.6) node {$b_1a_2$};
\draw [black] (68.1,-37.4) circle (3);
\draw (68.1,-37.4) node {$a_2b_1$};
\draw [black] (73.4,-41.8) circle (3);
\draw (73.4,-41.8) node {$a_2b_2$};
\draw [black] (53.5,-18.6) circle (3);
\draw (53.5,-18.6) node {$a_1b_1$};
\draw [black] (60.4,-20.3) circle (3);
\draw (60.4,-20.3) node {$a_1b_2$};
\draw [red] (28.08,-46.27) -- (21.32,-35.73);
\fill [red] (21.32,-35.73) -- (21.33,-36.67) -- (22.17,-36.13);
\draw [green] (32.17,-47.09) -- (43.73,-39.11);
\fill [green] (43.73,-39.11) -- (42.79,-39.15) -- (43.36,-39.97);
\draw [red] (17.18,-31.57) -- (6.22,-24.43);
\fill [red] (6.22,-24.43) -- (6.61,-25.29) -- (7.16,-24.45);
\draw [red] (17.9,-30.8) -- (9,-18.9);
\fill [red] (9,-18.9) -- (9.08,-19.84) -- (9.88,-19.24);
\draw [green] (48.67,-35.7) -- (63.33,-25.6);
\fill [green] (63.33,-25.6) -- (62.39,-25.64) -- (62.95,-26.47);
\draw [green] (49.06,-36.51) -- (67.04,-30.89);
\fill [green] (67.04,-30.89) -- (66.12,-30.66) -- (66.42,-31.61);
\draw [red] (30.47,-45.9) -- (33.93,-32.9);
\fill [red] (33.93,-32.9) -- (33.24,-33.54) -- (34.21,-33.8);
\draw [green] (35.83,-27.22) -- (42.67,-10.38);
\fill [green] (42.67,-10.38) -- (41.91,-10.93) -- (42.83,-11.31);
\draw [green] (36.61,-27.69) -- (47.99,-13.91);
\fill [green] (47.99,-13.91) -- (47.09,-14.21) -- (47.87,-14.85);
\draw [green] (32.63,-49.44) -- (46.97,-52.56);
\fill [green] (46.97,-52.56) -- (46.29,-51.9) -- (46.08,-52.88);
\draw [green] (52.87,-52.78) -- (73.13,-49.92);
\fill [green] (73.13,-49.92) -- (72.27,-49.54) -- (72.41,-50.53);
\draw [green] (52.88,-53.52) -- (71.42,-55.48);
\fill [green] (71.42,-55.48) -- (70.67,-54.9) -- (70.57,-55.9);
\draw [red] (34,-27.08) -- (29.4,-8.02);
\fill [red] (29.4,-8.02) -- (29.1,-8.91) -- (30.08,-8.68);
\draw [red] (34.88,-27.01) -- (36.02,-8.09);
\fill [red] (36.02,-8.09) -- (35.47,-8.86) -- (36.47,-8.92);
\draw [green] (19.04,-30.27) -- (14.96,-12.13);
\fill [green] (14.96,-12.13) -- (14.65,-13.02) -- (15.62,-12.8);
\draw [green] (19.97,-30.21) -- (21.73,-10.59);
\fill [green] (21.73,-10.59) -- (21.16,-11.34) -- (22.16,-11.43);
\draw [red] (52.17,-51.23) -- (65.83,-39.37);
\fill [red] (65.83,-39.37) -- (64.9,-39.51) -- (65.56,-40.27);
\draw [red] (52.6,-51.89) -- (70.7,-43.11);
\fill [red] (70.7,-43.11) -- (69.76,-43.01) -- (70.2,-43.91);
\draw [red] (47.29,-34.6) -- (52.41,-21.4);
\fill [red] (52.41,-21.4) -- (51.66,-21.96) -- (52.59,-22.32);
\draw [red] (48.12,-35.09) -- (58.48,-22.61);
\fill [red] (58.48,-22.61) -- (57.59,-22.9) -- (58.36,-23.54);
\end{tikzpicture}
\end{center}

In the following picture we depicted a single point of $T_{2,2}$ in the frame $\mathbb{\widetilde{T}}_{2,2}$ looks with the attached trees and maximal points.

\begin{center}
\begin{tikzpicture}[scale=0.27, font=\tiny]
\tikzstyle{every node}+=[inner sep=0pt]
\draw [black] (41.6,-5.7) circle (3);
\draw (41.6,-5.7) node {$\pair{\vec a, \rc}$};
\draw [black] (35.2,-16.7) circle (3);
\draw (35.2,-16.7) node {$\pair{\vec a, \eps^{(1)}, \bco}$};
\draw [black] (48,-17.3) circle (3);
\draw (48,-17.3) node {$\pair{\vec a, \eps^{(2)}, \bct}$};
\draw [black] (22.8,-32.9) circle (3);
\draw (22.8,-32.9) node {$\pair{\vec a, a_1, \bco}$};
\draw [black] (38.2,-32.9) circle (3);
\draw (38.2,-32.9) node {$\pair{\vec a, a_2, \bco}$};
\draw [black] (45.5,-32.9) circle (3);
\draw (45.5,-32.9) node {$\pair{\vec a, b_2, \bct}$};
\draw [black] (59.2,-32.9) circle (3);
\draw (59.2,-32.9) node {$\pair{\vec a, b_1, \bct}$};
\draw [black] (32.6,-26) circle (3);
\draw (32.6,-26) node {$\pair{\vec a, a_2, \mco}$};
\draw [black] (15,-26) circle (3);
\draw (15,-26) node {$\pair{\vec a, a_1, \mco}$};
\draw [black] (50.2,-26) circle (3);
\draw (50.2,-26) node {$\pair{\vec a, b_2, \mct}$};
\draw [black] (65.4,-26) circle (3);
\draw (65.4,-26) node {$\pair{\vec a, b_1, \mct}$};
\draw [black] (25.8,-10.7) circle (3);
\draw (25.8,-10.7) node {$\pair{\vec a, \eps^{(1)}, \mco}$};
\draw [black] (56.7,-11.4) circle (3);
\draw (56.7,-11.4) node {$\pair{\vec a, \eps^{(2)}, \mct}$};
\draw [red] (40.09,-8.29) -- (36.71,-14.11);
\fill [red] (36.71,-14.11) -- (37.54,-13.67) -- (36.68,-13.16);
\draw [green] (43.05,-8.33) -- (46.55,-14.67);
\fill [green] (46.55,-14.67) -- (46.6,-13.73) -- (45.73,-14.21);
\draw [red] (47.53,-20.26) -- (45.97,-29.94);
\fill [red] (45.97,-29.94) -- (46.6,-29.23) -- (45.61,-29.07);
\draw [green] (35.75,-19.65) -- (37.65,-29.95);
\fill [green] (37.65,-29.95) -- (38,-29.07) -- (37.02,-29.25);
\draw [green] (33.38,-19.08) -- (24.62,-30.52);
\fill [green] (24.62,-30.52) -- (25.51,-30.19) -- (24.71,-29.58);
\draw [red] (49.75,-19.74) -- (57.45,-30.46);
\fill [red] (57.45,-30.46) -- (57.39,-29.52) -- (56.58,-30.1);
\draw [red] (61.21,-30.67) -- (63.39,-28.23);
\fill [red] (63.39,-28.23) -- (62.49,-28.49) -- (63.23,-29.16);
\draw [red] (47.19,-30.42) -- (48.51,-28.48);
\fill [red] (48.51,-28.48) -- (47.65,-28.86) -- (48.47,-29.42);
\draw [green] (36.31,-30.57) -- (34.49,-28.33);
\fill [green] (34.49,-28.33) -- (34.61,-29.27) -- (35.38,-28.64);
\draw [green] (20.55,-30.91) -- (17.25,-27.99);
\fill [green] (17.25,-27.99) -- (17.51,-28.89) -- (18.18,-28.14);
\draw [green] (32.67,-15.09) -- (28.33,-12.31);
\fill [green] (28.33,-12.31) -- (28.73,-13.17) -- (29.27,-12.32);
\draw [red] (50.48,-15.62) -- (54.22,-13.08);
\fill [red] (54.22,-13.08) -- (53.27,-13.12) -- (53.84,-13.95);
\draw [green] (21.88,-35.76) -- (20.12,-41.24);
\fill [green] (20.12,-41.24) -- (20.84,-40.64) -- (19.89,-40.33);
\draw [green] (23.17,-35.88) -- (23.83,-41.12);
\fill [green] (23.83,-41.12) -- (24.22,-40.27) -- (23.23,-40.39);
\draw [green] (37.42,-35.8) -- (35.98,-41.2);
\fill [green] (35.98,-41.2) -- (36.67,-40.56) -- (35.7,-40.3);
\draw [green] (38.2,-35.9) -- (38.2,-41.1);
\fill [green] (38.2,-41.1) -- (38.7,-40.3) -- (37.7,-40.3);
\draw [red] (45.18,-35.88) -- (44.62,-41.12);
\fill [red] (44.62,-41.12) -- (45.2,-40.37) -- (44.21,-40.27);
\draw [red] (46.15,-35.83) -- (47.35,-41.17);
\fill [red] (47.35,-41.17) -- (47.66,-40.28) -- (46.68,-40.5);
\draw [red] (58.69,-35.86) -- (57.91,-40.34);
\fill [red] (57.91,-40.34) -- (58.54,-39.64) -- (57.56,-39.47);
\draw [red] (59.68,-35.86) -- (60.42,-40.34);
\fill [red] (60.42,-40.34) -- (60.78,-39.47) -- (59.79,-39.63);
\end{tikzpicture}
\end{center}

\begin{lemma}\label{lem:tildaT}
    $ \Log(\mathbb{\widetilde{T}}) = \Lo$.
\end{lemma}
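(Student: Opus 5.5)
We prove the two inclusions $\Lo \subseteq \Log(\mathbb{\widetilde{T}})$ and $\Log(\mathbb{\widetilde{T}}) \subseteq \Lo$ separately. For the first we check the frame conditions. For the second we combine the FMP of $\Lo$ with p-morphisms from $\mathbb{\widetilde{T}}$ onto finite rooted $\Lo$-frames.

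\emph{Soundness.} By construction $R'_1,R'_2$ are reflexive and transitive, so $\mathbb{\widetilde{T}}\models\logic{S4}\ast\logic{S4}$. By Lemma~\ref{lem:McKinseyProperty} and Lemma~\ref{lem:MKDuo_prop} it suffices to find, for every point $x$, an $R'_1$-successor $y$ all of whose $R'_2$-successors are $R'_1$-maximal, together with the mirror condition. This already gives the McKinsey property for $R'_1$, since $y$ itself is $R'_1$-maximal. We argue by cases on the type of $x$.
\begin{itemize}
\item If $x=\pair{\vec a,\rc}$, take $y=\pair{\vec a,\eps^{(2)},\bct}$. Its $R'_2$-cone consists of the points $\pair{\vec a,\vec b,\bct}$ and $\pair{\vec a,\vec b,\mct}$, and none of them has an $R'_1$-successor other than itself.
\item If $x=\pair{\vec a,\vec b,\bco}$, take $y=\pair{\vec a,\vec b,\mco}$, which is maximal for both relations.
\item If $x=\pair{\vec a,\vec b,\bct}$ or $x$ is an $\mci$-point, take $y=x$.
\end{itemize}
The mirror condition follows by the symmetry $a\leftrightarrow b$, $1\leftrightarrow 2$ of the construction.

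\emph{Completeness.} Let $A\notin\Lo$. By the FMP theorem, $A$ is refuted in a finite $\Lo$-frame. Passing to the subframe generated by the refuting point (generated subframes preserve validity), we get a finite $\Lo$-frame $G=(U,S_1,S_2)$ that is rooted, i.e.\ every point is reachable from the root $r$ by $S_1,S_2$-steps. Note that $A$ fails at some point of $G$ under a suitable valuation. It suffices to build a p-morphism $f:\mathbb{\widetilde{T}}\twoheadrightarrow G$; then Lemma~\ref{thm:pmorphism} gives $A\notin\Log(\mathbb{\widetilde{T}})$.

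We define $f$ on the main tree by a fair unravelling, using both children. Put $f(\pair{\eps,\rc})=r$. For $f(\pair{\vec a,\rc})=w$, fix enumerations $w=u_0,\dots,u_k$ of $S_1(w)$ and $w=v_0,\dots,v_m$ of $S_2(w)$. The $a_1$-child of a node copies its parent's image, and the $a_2$-child passes to the next element of the enumeration. Concretely, $f(\vec a\,a_1^{j}a_2)=u_{\min(j+1,k)}$ and $f(\vec a\,a_1^{j})=w$, and similarly with $b_1,b_2$ for $S_2$. Then $\pair{\vec a,\rc}$ sees, inside the main tree, preimages of all of $S_1(w)$ and of all of $S_2(w)$. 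Monotonicity holds by transitivity of $S_i$, and surjectivity holds because $G$ is rooted.

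The attached trees are handled using $\nu$ and $\mu$ from Lemma~\ref{lem:MKDuo_prop}.
\begin{itemize}
\item The $R'_2$-attached copy of $T_2^{(1)}$ at $\pair{\vec a,\rc}$ has its root mapped to $\nu(w)\in S_2(w)$. The tree itself is mapped onto the $S_1$-cone of $\nu(w)$ by the same unravelling. Each point of this cone is $S_2$-maximal, matching the fact that $\bco$-points are $R'_2$-maximal.
\item Each $\pair{\vec a,\vec b,\mco}$ is sent to an $S_1$-maximal point in $S_1\bigl(f(\pair{\vec a,\vec b,\bco})\bigr)$, which exists by the McKinsey property. This point is also $S_2$-maximal because it lies in the $S_1$-cone of $\nu(w)$, so it is a legitimate image of a point maximal for both relations.
\item Symmetrically, $\eps^{(2)}$ goes to $\mu(w)\in S_1(w)$, and the $\mct$-points go to points maximal for both relations.
\end{itemize}

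It remains to check lifting at each point type; the main-tree points are the delicate case. For $\pair{\vec a,\rc}$ the only successors outside the main tree are the attached roots, and their images already lie in $S_i(w)$, so the main-tree unravelling covers all of $S_i(w)$. Attached tree points lift by the unravelling inside the cone, together with the maximal leaves. The main obstacle is arranging the unravelling so that lifting holds simultaneously for both relations at every main-tree node, given that the tree is only binary while $S_i(w)$ can be large. The $a_1$-chain trick above is what handles this: it keeps the image fixed along $a_1$-steps, so the finite set $S_1(w)$ is exhausted along $a_1^j a_2$. We must also verify that the attached pieces are consistent with the transitive closure defining $R'_1$ and $R'_2$.
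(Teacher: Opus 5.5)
Your overall route is the paper's. Soundness goes via the frame conditions of Lemmas~\ref{lem:McKinseyProperty} and~\ref{lem:MKDuo_prop}, and your case analysis there is correct. Completeness goes via FMP, a rooted finite $\Lo$-frame $G$, a p-morphism $\mathbb{T}_{2,2}\pmor G$ on the main tree, p-morphisms of the attached binary trees onto the cones $S_1(\nu(w))$ and $S_2(\mu(w))$, and the $\mco$/$\mct$-points sent to points maximal for both relations. The paper simply cites the existence of $\mathbb{T}_{2,2}\pmor F$ and of the tree p-morphisms $h^{(i)}_w$. You construct them by hand, and your explicit recipe fails at exactly the step you flag as the main obstacle.

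With $f(\vec a\,a_1^{j})=w$ and $f(\vec a\,a_1^{j}a_2)=u_{\min(j+1,k)}$, every node $\vec a\,a_1^{j}$ also has image $w$. So lifting requires its $R'_1$-cone to realize all of $S_1(w)$. But the $a_2$-children on the chain above $\vec a\,a_1^{j}$ receive only the indices $\min(l,k)$ with $l>j$, so $u_1,\dots,u_j$ need not occur in that cone.

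For a concrete failure, let $G=\{w,u_1,u_2,u_3\}$, with $S_1$ the reflexive closure of $\{(w,u_1),(w,u_2),(w,u_3)\}$ and $S_2$ the identity; this is a rooted $\Lo$-frame. Your map sends the main-tree part of $R'_1(\pair{a_1a_1,\rc})$ onto $\{w,u_3\}$, and the attached roots in that cone to $\mu(w)$ or $u_3$. Hence at least one of $u_1,u_2$ has no preimage there, although $f(\pair{a_1a_1,\rc})=w$. The ``$a_1$-chain trick'' thus secures lifting only at the first node of each chain, and ``the same unravelling'' inside the attached trees has the same defect.

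The repair is to cycle through the enumeration rather than saturate. Put $f(x\,a_1)=f(x)$ and $f(x\,a_2)=u_{j \bmod (k+1)}$, where $u_0,\dots,u_k$ enumerates $S_1(f(x))$ and $j$ is the number of trailing $a_1$'s of $x$. Do the same for $b_1,b_2$ and inside the attached trees. Then every node of an $a_1$-chain sees all of $S_1(w)$. Monotonicity still follows from transitivity, surjectivity follows from rootedness, and the rest of your verification goes through as you describe. Alternatively, cite the known p-morphism as the paper does.
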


\begin{proof}
It is easy to see that $ \mathbb{\widetilde{T}} \models \Lo $. Indeed, the relations are transitive and reflexive by construction. From each point of the main tree $\mathbb{T}_{2,2}$, an $R_1$-maximal point in the attached tree is reachable. Moreover, each point accessible from this point via $R_2$ is also maximal with respect to $R_1$. From each point of the attached tree $T_2^{(1)}$, an attached maximal (with respect to both relations) point is reachable. And each point in the attached tree $T_2^{(2)}$ is maximal with respect to $R_1$. Hence, $\mathbb{\widetilde{T}} \models \logic{S4.1}\ast\logic{S4} + \romb_1\Box_2(\romb_1 p \to \Box_1 p)$. Similarly, one can show that $\mathbb{\widetilde{T}} \models \logic{S4}\ast\logic{S4.1} + \romb_2\Box_1(\romb_2 p \to \Box_2 p)$.

Consider now $A\notin \Lo$. The logic $ \Lo $ has FMP, so there is a rooted finite frame $ F $ such that $ F \models \Lo$ and $ F \not\models A $. Now it is enough to show that $ \mathbb{\widetilde{T}} \pmor F $.

In \cite{van_benthem_modal_space} a p-morphism $ f: \mathbb{T}_{2,2} \pmor F$ is described.

By Lemma \ref{lem:MKDuo_prop}, for each point $ w\in F $ there exists $ \mu^{(1)}(w) \in R_1(w) $ such that every $ u \in R_2(\mu^{(1)}(w)) $ is $R_1$-maximal. And also there exists $ \mu^{(2)}(w) \in R_2(w) $ such that every $ u \in R_1(\mu^{(2)}(w)) $ is $R_2$-maximal.

Clearly, for all $w \in F$, the points $\mu^{(2)}(\mu^{(1)}(w))$ and $\mu^{(1)}(\mu^{(2)}(w))$ are maximal with respect to both relations.

For each $ w $, we fix two p-morphisms: 

$ h^{(1)}_w: \mathbb{T}_2 \pmor \left (R_1(\mu^{(2)}(w)), R_1|_{R_1(\mu^{(2)}(w))}\right )$ and $ h^{(2)}_w: \mathbb{T}_2 \pmor \left (R_2(\mu^{(1)}(w)), R_2|_{R_2(\mu^{(1)}(w))}\right )$. 
 
 We define $ g: \mathbb{\widetilde{T}} \pmor F $:
	\begin{align*}
		g\left (\pair{\vec a, \rc}\right ) &= f(\vec a);\\
		\vec b \in T_2^{(1)}\implies g\left (\pair{\vec a, \vec b, \bco}\right ) &=h^{(1)}_{f(\vec a)}(\vec b)\\
            \vec b \in T_2^{(1)}\implies g\left (\pair{\vec a, \vec b, \mco}\right ) &=\mu^{(1)}(h^{(1)}_{f(\vec a)}(\vec b))\\
            \vec b \in T_2^{(2)}\implies g\left (\pair{\vec a, \vec b, \bct}\right ) &=h^{(2)}_{f(\vec a)}(\vec b)\\
            \vec b \in T_2^{(2)}\implies g\left (\pair{\vec a, \vec b, \mct}\right ) &=\mu^{(2)}(h^{(2)}_{f(\vec a)}(\vec b)).
	\end{align*}
	
	Let us check that  $ g $ is a p-morphism.
	
	The map $ g $ is surjective, since $ f $ is surjective.
	It is monotone, since maps $ f $, $ h^{(1)}_w$ and $h^{(2)}_w$ are all monotone, and all relations are transitive. Finally, the reader can easily check the lifting property by considering the three cases: $x = \pair{\vec a, \rc}$, $ x = \pair{\vec a, \vec b, \bci}$ and $x = \pair{\vec a, \vec b, \mci}$; and using the lifting property of $ f $, $ h^{(1)}_w$ and $h^{(2)}_w$.
\end{proof}

\section{Main result}

\begin{theorem}\label{thm:main}
	$\logic{S4.1} \times_t \logic{S4.1} = \Lo$.
\end{theorem}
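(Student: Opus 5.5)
One inclusion is already available: Corollary~\ref{cor:S41xS41correctness} gives $\Lo \subseteq \logic{S4.1} \times_t \logic{S4.1}$. So it remains to prove $\logic{S4.1} \times_t \logic{S4.1} \subseteq \Lo$. Take a formula $A \notin \Lo$. We will construct two weakly scattered spaces $\X_1, \X_2$ with $\X_1 \times \X_2 \not\models A$. By the theorem of \cite{gabelaia_modal_def_2001}, such spaces validate $\logic{S4.1}$.

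By Lemma~\ref{lem:tildaT}, $A$ is refuted in $\mathbb{\widetilde{T}}$, equivalently in $\Top_2(\mathbb{\widetilde{T}})$. By Lemma~\ref{thm:top_pmorphism} it therefore suffices to find weakly scattered $\X_1,\X_2$ and a surjective map $g:\X_1\times\X_2 \to \Top_2(\mathbb{\widetilde{T}})$ that is open and continuous with respect to both pairs of topologies: $T_1^h$ against $T_{R'_1}$, and $T_2^v$ against $T_{R'_2}$. Since $\Top_2$ is Alexandrov, the conditions become the following.

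\begin{itemize}
\item \emph{Continuity:} every point $(x,y)$ has a horizontal neighbourhood $U\times\{y\}$ with $g(U\times\{y\})\subseteq R'_1(g(x,y))$, and similarly in the vertical direction.
\item \emph{Openness:} for every horizontal neighbourhood $U\times\{y\}$ of $(x,y)$, $g(U\times\{y\})\supseteq R'_1(g(x,y))$, and similarly in the vertical direction.
\end{itemize}

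The construction follows the known proofs for $\logic{S4}\times_t\logic{S4}$ (e.g.\ with $\QQ\times\QQ$ or Cantor-like spaces, \cite{van_benthem_multimodal}) and for $\logic{S4.1}\times_t\logic{S4}$ \cite{kudinov_topological_product_mckinsey}. As $\X_1=\X_2=\X$ we take a countable weakly scattered space. A natural candidate is $\X=\QQ\cup D$, where $D$ is a countable dense set of new isolated points. More concretely, we could take a space built from sequences, with a dense set of isolated "leaf" points, in the style of the infinite binary tree with attached maxima. We then define $g$ by a back-and-forth (zig-zag) enumeration of pairs of points of $\X$.

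The shape of $\mathbb{\widetilde{T}}$ should match the topology directly.
\begin{itemize}
\item Points $(x,y)$ with both coordinates non-isolated go to the main tree part $\pair{\vec a,\rc}$.
\item Points with $x$ isolated and $y$ non-isolated lie on a horizontal-isolated vertical line, so they should go to the attached copies $T_2^{(2)}$. These copies are $R'_1$-maximal and their successors are the vertical $\mct$ leaves.
\item Symmetrically, $y$ isolated and $x$ non-isolated corresponds to $T_2^{(1)}$.
\item Points with both coordinates isolated must go to doubly maximal points.
\end{itemize}

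The points $\pair{\vec a,\vec b,\mci}$ are exactly the doubly maximal ones, and they are reached in one step along the opposite relation from the attached trees. This agrees with the fact that a neighbourhood $\{x\}\times V$ of an $x$-isolated point contains doubly isolated points.

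The key steps, in order, are as follows.
\begin{enumerate}
\item Fix $\X$ and a well-founded construction of $g$ in stages. At each stage, $g$ is defined on finitely many "boxes" (clopen horizontal or vertical pieces), and each stage satisfies an inductive invariant.
\item Verify continuity. At each point $(x,y)$, some horizontal neighbourhood maps into $R'_1(g(x,y))$. This uses the tree-like structure of the root neighbourhoods.
\item Verify openness. Every horizontal neighbourhood realises all $R'_1$-successors. Here the binary branching of $T_2^{(1)}$ and $\mathbb{T}_{2,2}$ is needed, so that both successors get assigned densely.
\item Use weak scatteredness of $\X$ to check that isolated coordinates are sent exactly to the maximal layers.
\end{enumerate}

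The main obstacle is the coherence of the horizontal and vertical assignments. Each point lies on one horizontal line and one vertical line, and both openness conditions must hold simultaneously. The difficulty is sharpest at points where one coordinate is isolated: horizontal openness there is trivial, while vertical openness must hit the whole $R'_2$-cone inside $T_2^{(2)}$ together with its $\mct$-leaves. I would first attempt a dovetailed enumeration of all requirements, each of the form "in neighbourhood $N$ of $(x,y)$, realise successor $w$". Each requirement would be satisfied at a fresh non-isolated or isolated coordinate value as appropriate, using density of both kinds of points. If this is too delicate, the fallback is to build $\X$ itself as a tree-like space over $\{a_1,a_2\}^*$ with attached isolated points, so that $g$ can be defined coordinatewise from the words.
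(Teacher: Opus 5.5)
\textbf{There is a genuine gap: the proposal stops before the construction that proves the theorem.}

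What you have matches the paper's top level. Soundness comes from Corollary~\ref{cor:S41xS41correctness}. The converse is reduced to a surjective map from the square of a weakly scattered space onto $\Top_2(\mathbb{\widetilde{T}})$ that is open and continuous in both directions. Your Alexandrov reformulation of these conditions is correct. Your assignment of point types is also the paper's: both coordinates non-isolated go to the main tree, first coordinate isolated goes to $T_2^{(2)}$, second isolated goes to $T_2^{(1)}$, and both isolated go to the $m$-points. After that, however, you give a plan rather than a proof. $\X$ is not fixed, no map is defined, and the invariant of the stage-wise construction is never stated. The coherence problem you call the main obstacle is left open. Since this construction is the whole content of $\logic{S4.1}\times_t\logic{S4.1}\subseteq\Lo$, the proof is incomplete.

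\textbf{The obstacle is sharper than you state it.} Suppose $(x,y)$ is sent to $\pair{\vec a,\rc}$.
\begin{enumerate}
\item Every horizontal neighbourhood of $(x,y)$ contains some $(x',y)$ sent to $\pair{\vec a,\eps^{(2)},\bct}$.
\item Vertical continuity at $(x',y)$ pins a whole piece $\{x'\}\times V$ into the attached tree over $\vec a$ together with its $\mct$-leaves.
\item Vertical openness at $(x,y)$ yields $y'\in V$ with $(x,y')$ sent to $\pair{\vec a b_1,\rc}$.
\item Hence every horizontal neighbourhood witnessing continuity at $(x,y')$ must omit $x'$, yet must still contain points sent to $\pair{\vec a b_1,\eps^{(2)},\bct}$.
\end{enumerate}
So the points realizing roots near $x$ must be tied to a main-tree point determined by the \emph{other} coordinate, and the admissible neighbourhood sizes must be synchronized with this.

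\textbf{How the paper resolves it.} It builds this synchronization into the space and the map rather than into an enumeration.
\begin{itemize}
\item The space is $\X=W_\omega\times\NN$ over the space $\Y$ of pseudo-infinite paths. An isolated point $\pair{\alpha',n}$ lies in a basic set $U'_k(\alpha,0)$ only when $k\le n$, so $n$ acts as its depth.
\item A mixed point goes to $\pair{g(\alpha\lceil_n0^\omega,\beta\lceil_n0^\omega),f^{(2)}_\omega(\gamma),\bct}$, where $\beta=\beta\lceil_n\cdot\gamma$. The depth of the isolated coordinate says where to cut the other one. The prefix selects the main-tree point via the interleaving p-morphism $g:\Y\times\Y\pmor\Top_2(\mathbb{T}_{2,2})$, and the tail selects the position in the attached tree.
\item Doubly isolated points are cut at $\min(n,k)$ and sent to $\mco$- or $\mct$-points.
\end{itemize}
With this, $U'_s(\alpha,0)\times\{\pair{\beta,0}\}$ with $s\ge\max(\st(\alpha),\st(\beta))$ contains only isolated points with $\gamma=0^\omega$. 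These land exactly on the roots over $g(U_s(\alpha),\beta)$. Along $\{\pair{\alpha,n}\}\times U'_s(\beta,0)$ with $s>n$, the prefix $\beta'\lceil_n$ is frozen. The tails of $\beta'$ then sweep out the required cone of the attached tree, and the points $\pair{\beta',k}$ with $k\ge s>n$ supply its $\mct$-leaves.

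Your fallback of a tree-like space with $g$ defined from the words points in this direction. It still lacks the decisive ingredient: depth-indexed isolated points and truncation of the other coordinate at that depth, or an explicitly stated and verified invariant doing the same job. Without it, your steps 1--4 do not yet prove the theorem.
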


By Corollary \ref{cor:S41xS41correctness} we have the left-to-right inclusion. For the right-to-left inclusion,   we need to construct a weakly scattered space $\X$ such that there exists an open and continuous surjective map  $ f: \X \times \X \to \Top_2(\mathbb{\widetilde{T}})$. 
Using Lemma \ref{lem:tildaT} and Theorem \ref{thm:top_pmorphism} we get $$
\logic{S4.1} \times_t \logic{S4.1} \subseteq \Log(\X \times \X) \subseteq \Log(\mathbb{\widetilde{T}}) = \Lo.
$$

The rest of this section is dedicated to constructing space $\X$ and map $f$.

\begin{definition}
	A \emph{path with stops} over $\mathbb{T}_2 = (\set{1,2}^*, \sqsubseteq) $ is a tuple (or a word) $x_1 \ldots x_n$, where $x_i \in \set{0,1,2}$. We define function $ f_F $ recursively on the set of all paths with stops:
	\begin{itemize}
		\item $ f_F(\eps) = \eps $;
		\item $ f_F(\vec a 0) = f_F(\vec a) $;
		\item $ f_F(\vec a 1) = f_F(\vec a)1 $;
		\item $ f_F(\vec a 2) = f_F(\vec a)2 $.
	\end{itemize}

	
\end{definition}

\begin{definition}
	A \emph{pseudo-infinite path (with stops)} over $\mathbb{T}_2$ is an infinite sequence of 0, 1 and 2, containing only finitely many non-zero values.
	We denote the infinite sequences of zeros as $0^\omega$.
	So we call $\alpha$ a pseudo-infinite path if $\alpha = \vec{a}0^\omega$ for some $\vec{a} \in \set{0,1,2}^*$.
	Let $W_\omega$ be the set of all pseudo-infinite paths over $\mathbb{T}_2$.
	We define functions $st: W_\omega \to \NN$ and $f_\omega:W_\omega \to T_2$. Let $\alpha = x_1 \ldots x_n \ldots \in W_\omega$. Then
	\begin{align*}
		\st(\alpha) &= \min \setdef[N]{\forall k> N (x_k = 0)};\\
		\alpha\lceil_k &= x_1 \ldots x_k;\\ 
		f_\omega(\alpha) &= f_F\left (\alpha\lceil_{\st(\alpha)}\right );\\
        U_k(\alpha) &= \setdef[\beta \in W_\omega]{\alpha \lceil_m = \beta \lceil_m, \text{ where }m=\max(k, st(\alpha))},\hbox{ for }\alpha \in W_\omega, k\in \NN.
	\end{align*}
\end{definition}

The following proposition is obvious.

\begin{proposition}
     For any $\alpha \in W_\omega$ and $s \in \NN$,
    \begin{enumerate}
		\item If $s - 1 < st(\alpha)$ then $U_{s-1}(\alpha) = U_s(\alpha)$;
		\item If $s - 1 \geq st(\alpha)$ then $U_{s-1}(\alpha) = U_s(\alpha) \sqcup U_s(\alpha\lceil_{s-1}10^{\omega}) \sqcup U_s(\alpha\lceil_{s-1}20^{\omega})$.
	\end{enumerate}
\end{proposition}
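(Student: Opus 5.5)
The proposition is a direct unfolding of the definition of $U_k$. The key fact is that $U_k(\alpha)$ is exactly the set of $\beta\in W_\omega$ agreeing with $\alpha$ on the first $m_k(\alpha)=\max(k,\st(\alpha))$ coordinates. So I will compare $m_{s-1}(\alpha)$ with $m_s(\alpha)$, and in the second case split $U_{s-1}(\alpha)$ according to the value of the $s$-th coordinate.

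\emph{Item 1.} If $s-1<\st(\alpha)$, then $s\le \st(\alpha)$, so
\[
m_{s-1}(\alpha)=\max(s-1,\st(\alpha))=\st(\alpha)=\max(s,\st(\alpha))=m_s(\alpha).
\]
The two defining conditions coincide, hence $U_{s-1}(\alpha)=U_s(\alpha)$.

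\emph{Item 2.} If $s-1\ge \st(\alpha)$, then $m_{s-1}(\alpha)=s-1$ and $m_s(\alpha)=s$. Write $\alpha=x_1x_2\ldots$. Since $s>\st(\alpha)$, the definition of $\st$ gives $x_s=0$, so $\alpha\lceil_s=\alpha\lceil_{s-1}0$.

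\emph{Partition of $U_{s-1}(\alpha)$.} Every $\beta\in U_{s-1}(\alpha)$ satisfies $\beta\lceil_{s-1}=\alpha\lceil_{s-1}$, and its $s$-th coordinate is $0$, $1$ or $2$. This splits $U_{s-1}(\alpha)$ into three pairwise disjoint pieces, since any two of them differ at coordinate $s$.

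\emph{Identifying the pieces.}
\begin{itemize}
\item The piece with $s$-th coordinate $0$ is $\setdef[\beta]{\beta\lceil_s=\alpha\lceil_s}=U_s(\alpha)$, because $m_s(\alpha)=s$.
\item For $j\in\set{1,2}$, let $\gamma_j=\alpha\lceil_{s-1}j0^\omega$. Its last nonzero coordinate is at position $s$, so $\st(\gamma_j)=s$ and $m_s(\gamma_j)=s$. Therefore
\[
U_s(\gamma_j)=\setdef[\beta]{\beta\lceil_s=\alpha\lceil_{s-1}j},
\]
which is exactly the piece with $s$-th coordinate $j$.
\end{itemize}
This gives $U_{s-1}(\alpha)=U_s(\alpha)\sqcup U_s(\alpha\lceil_{s-1}10^\omega)\sqcup U_s(\alpha\lceil_{s-1}20^\omega)$.

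\emph{Edge cases.} The only point needing care is the computation $\st(\gamma_j)=s$ together with the boundary conventions. When $s-1=0$ we have $\alpha\lceil_0=\eps$, and when $\alpha=0^\omega$ we have $\st(\alpha)=0$. In both situations the argument above goes through verbatim, since it only uses $m_k=\max(k,\st)$ and $x_s=0$ for $s>\st(\alpha)$. No step is a genuine obstacle; the proof is routine bookkeeping.
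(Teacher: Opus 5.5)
Your proof is correct and is essentially the argument the paper has in mind; the paper calls the proposition obvious and gives no proof. You compare $\max(s-1,\st(\alpha))$ with $\max(s,\st(\alpha))$, and in the second case you split $U_{s-1}(\alpha)$ by the $s$-th coordinate, using $x_s=0$ and $\st(\alpha\lceil_{s-1}j0^\omega)=s$.
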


That is, the neighborhood $U_s(\alpha)$ with a smaller index $s$ consists of neighborhoods with larger indices.

\begin{proposition}\label{prop:top_1}
    For any $\alpha, \beta \in W_\omega$ and $0<k, m$, one and only one of the following holds: 
	\begin{enumerate}
		\item $U_k(\alpha) \cap U_m(\beta) = \varnothing$;
		\item $U_k(\alpha) \subseteq U_m(\beta)$ or $U_m(\beta) \subseteq U_k(\alpha)$.
	\end{enumerate}
\end{proposition}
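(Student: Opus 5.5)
The plan is to identify each set $U_k(\alpha)$ with a \emph{cylinder}. Put $m=\max(k,\st(\alpha))$ and $\vec u=\alpha\lceil_m\in\set{0,1,2}^m$. By definition,
\[
U_k(\alpha)=C(\vec u),\qquad C(\vec u)=\setdef[\gamma\in W_\omega]{\gamma\lceil_{|\vec u|}=\vec u},
\]
the set of all pseudo-infinite paths extending the finite word $\vec u$. The statement then reduces to the standard fact that cylinders over finite words form a tree-like family: any two of them are either nested or disjoint. Each cylinder is nonempty, since $\vec u0^\omega\in C(\vec u)$.

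Concretely, write $U_k(\alpha)=C(\vec u)$ and $U_m(\beta)=C(\vec v)$, and assume without loss of generality that $|\vec u|\le|\vec v|$. There are two cases.

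\begin{enumerate}
\item If $\vec u$ is a prefix of $\vec v$, then every $\gamma$ extending $\vec v$ also extends $\vec u$, so $U_m(\beta)\subseteq U_k(\alpha)$.
\item Otherwise $\vec u$ and $\vec v$ differ at some position $i\le|\vec u|$. Any $\gamma$ in both sets would satisfy $\gamma\lceil_{|\vec u|}=\vec u$ and $\gamma\lceil_{|\vec v|}=\vec v$, so its $i$-th letter would equal both $u_i$ and $v_i$, which is impossible. Hence $U_k(\alpha)\cap U_m(\beta)=\varnothing$.
\end{enumerate}

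This proves that at least one of the two alternatives holds.

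For ``only one'', note that the sets are nonempty. An inclusion between two nonempty sets forces their intersection to be the smaller set, which is nonempty. So alternatives 1 and 2 cannot hold simultaneously.

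There is no real obstacle here. The only point needing care is checking that $U_k(\alpha)$ is determined by the single finite word $\alpha\lceil_m$, with $m=\max(k,\st(\alpha))$ depending on $\alpha$. This matters because membership $\gamma\in U_k(\alpha)$ is defined using $\alpha$'s index $m$, not $\gamma$'s, so the set is exactly the cylinder $C(\alpha\lceil_m)$ and the argument above applies.
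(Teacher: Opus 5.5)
Your proposal is correct and follows the argument the paper has in mind when it calls the proof ``straightforward''. $U_k(\alpha)$ is the cylinder over the finite word $\alpha\lceil_{\max(k,\st(\alpha))}$; two such cylinders are nested or disjoint according to whether one word is a prefix of the other; and nonemptiness ($\alpha\in U_k(\alpha)$) rules out both alternatives holding at once. The only blemish is cosmetic: you use $m$ both for $\max(k,\st(\alpha))$ and for the index in $U_m(\beta)$, so one of them should be renamed.
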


The proof is straightforward.

\begin{lemma}
	The family $B = \setdef[U_{k}(\alpha)]{\alpha \in W_\omega, k>0}$ forms a base for a topology.
\end{lemma}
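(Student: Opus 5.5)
We apply the base criterion from \cite{engelking_general_topology} stated in Section~2: it suffices to show that $B$ covers $W_\omega$ and that any point of the intersection of two members of $B$ has a member of $B$ containing it and lying inside that intersection.

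\textbf{Covering.} First we check that $\alpha \in U_k(\alpha)$ for every $\alpha \in W_\omega$ and every $k>0$. This is immediate from the definition, since $\alpha\lceil_m = \alpha\lceil_m$. So, for instance, $\alpha \in U_1(\alpha)$, and hence $\bigcup B = W_\omega$.

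\textbf{Intersection condition.} Let $G_1 = U_k(\alpha)$, $G_2 = U_m(\beta)$ and $\gamma \in G_1 \cap G_2$. By Proposition~\ref{prop:top_1}, since the intersection is nonempty, one of the two sets is contained in the other. Say $U_k(\alpha) \subseteq U_m(\beta)$; then $G_1 \cap G_2 = U_k(\alpha)$.

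It remains to find a basic set containing $\gamma$ and contained in $U_k(\alpha)$. The natural choice is $U_k(\gamma)$, and the key step is the auxiliary claim that $\gamma \in U_k(\alpha)$ implies $U_k(\gamma) = U_k(\alpha)$.

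\textbf{Proof of the claim.} Put $m_\alpha = \max(k, st(\alpha))$, so that $\gamma\lceil_{m_\alpha} = \alpha\lceil_{m_\alpha}$.
\begin{itemize}
\item Since $\alpha$ is zero after position $st(\alpha) \le m_\alpha$, the word $\gamma$ agrees with $\alpha$ up to $m_\alpha$. This gives $\gamma\lceil_{st(\alpha)} = \alpha\lceil_{st(\alpha)}$, and $\gamma$ is zero on the positions $st(\alpha)+1,\dots,m_\alpha$.
\item If $st(\gamma) \le m_\alpha$, then $st(\gamma) = st(\alpha)$ (both are the last nonzero position up to $m_\alpha$), and hence $m_\gamma = m_\alpha$.
\item If $st(\gamma) > m_\alpha$, then $m_\gamma = st(\gamma) > m_\alpha \ge k$. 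In this case we do not get equality, only $U_k(\gamma) \subseteq U_k(\alpha)$: agreeing with $\gamma$ up to $m_\gamma$ implies agreeing with $\gamma$, hence with $\alpha$, up to $m_\alpha$.
\end{itemize}
So the claim should be weakened to $U_k(\gamma) \subseteq U_k(\alpha)$. That weaker form is all the base criterion needs, and it holds in both cases, since in each case $\beta\lceil_{m_\gamma} = \gamma\lceil_{m_\gamma}$ forces $\beta\lceil_{m_\alpha} = \gamma\lceil_{m_\alpha} = \alpha\lceil_{m_\alpha}$, because $m_\alpha \le m_\gamma$. Since also $\gamma \in U_k(\gamma)$, the intersection condition follows.

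The main obstacle is this bookkeeping on $m = \max(k, st(\cdot))$. The critical step is showing $m_\alpha \le m_\gamma$ whenever $\gamma \in U_k(\alpha)$. It holds because either $st(\gamma) \ge m_\alpha$, or $\gamma$ vanishes after $st(\gamma) < m_\alpha$, in which case $\alpha$ also vanishes beyond $st(\gamma)$. Then $st(\alpha) \le st(\gamma)$, so $m_\alpha = \max(k, st(\alpha)) \le \max(k, st(\gamma)) = m_\gamma$.

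If Proposition~\ref{prop:top_1} is considered insufficiently justified, the same inclusion argument applied to both $U_k(\alpha)$ and $U_m(\beta)$ gives $U_{\max(k,m)}(\gamma) \subseteq G_1 \cap G_2$ directly. In that case we would use this route and cite the proposition only for orientation.
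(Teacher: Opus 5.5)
Your proof is correct, and its main line matches the paper's: Proposition~\ref{prop:top_1} turns a nonempty intersection $G_1\cap G_2$ into one of the two basic sets, and you also check the covering condition, which the paper leaves implicit. At that point you are already done, since $G_1\cap G_2=U_k(\alpha)$ is itself a member of $B$ containing $\gamma$; the auxiliary inclusion $U_k(\gamma)\subseteq U_k(\alpha)$ (rightly weakened from equality, via $\st(\alpha)\le \st(\gamma)$) is only needed for your fallback route, where $U_{\max(k,m)}(\gamma)\subseteq G_1\cap G_2$ gives a self-contained argument that does not depend on the unproved Proposition~\ref{prop:top_1}.
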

\begin{proof}
	It follows from the Proposition (\ref{prop:top_1}) that the intersection of any two elements of $B$ is either empty or coincides with one of them and therefore $B$ forms a base for a topology.
\end{proof}

The topology with this base we denote as $T_\omega$ and define $\Y = (W_\omega, T_\omega)$.

\begin{proposition}[cf. \cite{kudinov_neighborhood_frames}]
	Function $f_\omega: \Y \to \Top(T_2)$ is a p-morphism.
\end{proposition}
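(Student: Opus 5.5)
We need to show that $f_\omega\colon \Y \to \Top(T_2)$ is surjective, continuous and open, where $\Top(T_2)$ carries the Alexandrov topology whose basic open sets are the cones $\{\vec c \mid \vec b \sqsubseteq \vec c\}$ (denote it $\uparrow\vec b$). It suffices to check continuity on these basic sets and openness on the basic sets $U_k(\alpha)$ of $T_\omega$.

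\textbf{Surjectivity.} Given $\vec b = y_1\ldots y_n \in \{1,2\}^*$, the sequence $\alpha = y_1\ldots y_n0^\omega$ has $f_\omega(\alpha)=\vec b$, since $f_F$ just deletes zeros.

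\textbf{Monotonicity of $f_\omega$ on basic sets.} This is the key observation. Let $\beta\in U_k(\alpha)$ and put $m=\max(k,\st(\alpha))$. Then $\beta\lceil_m=\alpha\lceil_m$. The word $\alpha\lceil_m$ already contains all nonzero letters of $\alpha$, so $f_F(\alpha\lceil_m)=f_\omega(\alpha)$. The word $\beta$ extends $\alpha\lceil_m$, and $\st(\beta)\ge\st(\alpha)$ or the extra letters are zeros. Hence $f_\omega(\beta)=f_F(\beta\lceil_{\max(m,\st(\beta))})$ has $f_F(\alpha\lceil_m)$ as a prefix. Therefore
\[
f_\omega(U_k(\alpha))\subseteq \uparrow f_\omega(\alpha).
\]

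\textbf{Continuity.} Take $\alpha\in f_\omega^{-1}(\uparrow\vec b)$. Then $\vec b\sqsubseteq f_\omega(\alpha)$, and by transitivity of $\sqsubseteq$ and the inclusion above, $U_1(\alpha)\subseteq f_\omega^{-1}(\uparrow\vec b)$. So the preimage of every basic open set is open.

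\textbf{Openness.} We prove $f_\omega(U_k(\alpha))=\uparrow f_\omega(\alpha)$; the inclusion $\subseteq$ was shown above. For $\supseteq$, take $\vec c$ with $f_\omega(\alpha)\cdot\vec d=\vec c$ for some $\vec d\in\{1,2\}^*$. Let $m=\max(k,\st(\alpha))$ and set
\[
\beta=\alpha\lceil_m\cdot\vec d\cdot 0^\omega .
\]
Then $\beta\in W_\omega$, $\beta\lceil_m=\alpha\lceil_m$ so $\beta\in U_k(\alpha)$, and $f_\omega(\beta)=f_F(\alpha\lceil_m)\cdot\vec d=\vec c$. Images of unions are unions of images, so images of all open sets are open.

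\textbf{Expected obstacle.} The only delicate point is the bookkeeping with $\st$ in the monotonicity step: $U_k(\alpha)$ fixes the first $\max(k,\st(\alpha))$ coordinates, which are exactly enough to freeze the value $f_\omega(\alpha)$ as a prefix. Once that is settled, continuity and openness are routine, and these facts together with surjectivity show that $f_\omega$ is a p-morphism in the sense of Lemma~\ref{thm:top_pmorphism}.
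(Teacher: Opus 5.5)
Your proof is correct and complete. The paper does not prove this proposition itself; it only refers to \cite{kudinov_neighborhood_frames}, so there is no in-paper argument to compare with. Your direct verification is the natural one: you establish the single identity $f_\omega(U_k(\alpha))=\{\vec c \mid f_\omega(\alpha)\sqsubseteq \vec c\}$ for every basic set, and read off continuity (via $U_1(\alpha)$) and openness from it.

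One sentence is worth tidying: ``$\st(\beta)\ge\st(\alpha)$ or the extra letters are zeros''. It is not wrong, but what you actually use is that $f_F$ respects concatenation. Write $\beta=\alpha\lceil_m\cdot\gamma$ with $\gamma\in W_\omega$. Since $m\ge\st(\alpha)$, this gives $f_\omega(\beta)=f_F(\alpha\lceil_m)\cdot f_\omega(\gamma)=f_\omega(\alpha)\cdot f_\omega(\gamma)$ directly, with no case distinction on $\st(\beta)$.
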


We define a function $g:W_\omega\times W_\omega \to T_{2,2}$. Let $\alpha = x_1x_2\ldots \in W_\omega$ and $\beta = y_1y_2\ldots \in W_\omega$. Recall that $T_{2,2} = \set{a_1, a_2, b_1, b_2}^*$. For convenience, we set $a_0 = b_0 = 0$.
\[  
g(\alpha, \beta) = f'_\omega(a_{x_1}b_{y_1}a_{x_2}b_{y_2}\ldots),
\]
where $f'_\omega$ is a function erasing zeros similar to $f_\omega$ but defined on the set of infinite sequences of symbols from the set $\set{0, a_1, a_2, b_1, b_2}$. Since $\alpha$ and $\beta$ end with tails of zeros, the image $g(\alpha, \beta)$ is an elements of $T_{2,2}$.

\begin{proposition}[cf. \cite{kudinov_neighborhood_frames}] \label{prop:pmorphismg}
	Function $g:\Y\times \Y \pmor Top_2(\mathbb{T}_{2,2})$ is a p-morphism.
\end{proposition}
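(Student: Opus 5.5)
We verify the three defining conditions for $g$ with respect to the product topologies $T^h$ (horizontal, generated by $U_k(\alpha)\times\{\beta\}$) and $T^v$ (vertical, generated by $\{\alpha\}\times U_k(\beta)$) on $\Y\times\Y$, and the Alexandrov topologies $T_{R_1},T_{R_2}$ of $\mathbb{T}_{2,2}$. It suffices to prove, for each $i$, continuity and openness of $g$ as a map $(W_\omega\times W_\omega,T^h)\to(T_{2,2},T_{R_1})$ (and symmetrically for the vertical case), checking everything on basic sets. Surjectivity is immediate: a word $w\in T_{2,2}$ can be written as an interleaving $a_{x_1}b_{y_1}a_{x_2}b_{y_2}\ldots$ with $0$-padding (insert $b_0$ between consecutive $a$-letters and $a_0$ between consecutive $b$-letters), giving $\alpha,\beta$ with $g(\alpha,\beta)=w$.

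\textbf{Continuity.} Fix $(\alpha,\beta)$ with $g(\alpha,\beta)=\vec w$; the minimal $R_1$-neighbourhood of $\vec w$ is $\{\vec w\cdot\vec c\mid \vec c\in\Sigma_a^*\}$. Take $k\ge \max(\st(\alpha),\st(\beta))$. For $\alpha'\in U_k(\alpha)$, $\alpha'$ agrees with $\alpha$ on the first $k$ coordinates, and after position $k$ the interleaved sequence for $(\alpha',\beta)$ has only zeros in the $b$-slots (as $\st(\beta)\le k$). Hence $g(\alpha',\beta)=\vec w\cdot f_\omega$-type tail consisting only of $a$-letters, i.e. $\vec w R_1 g(\alpha',\beta)$. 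So $U_k(\alpha)\times\{\beta\}\subseteq g^{-1}(R_1(\vec w))$, which gives continuity at every point of the preimage.

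\textbf{Openness.} Take a basic horizontal set $U_k(\alpha)\times\{\beta\}$ and a point $(\alpha',\beta)$ in it, with $\vec w'=g(\alpha',\beta)$. We must show $R_1(\vec w')\subseteq g(U_k(\alpha)\times\{\beta\})$. Given $\vec c=a_{c_1}\ldots a_{c_r}\in\Sigma_a^*$, set $m=\max(k,\st(\alpha'),\st(\beta))$ and define $\alpha''=\alpha'\lceil_m c_1\ldots c_r 0^\omega$. Then $\alpha''\in U_k(\alpha')\subseteq U_k(\alpha)$ (using Proposition~\ref{prop:top_1} for the inclusion), and since beyond position $m$ the $b$-slots are zero, $g(\alpha'',\beta)=\vec w'\cdot\vec c$. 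The vertical case is symmetric, with the roles of $a$- and $b$-letters exchanged; the only asymmetry is that the $a$-letter comes first in each interleaved pair, which is harmless because we only append after position $m$ where the other coordinate is already zero.

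The main obstacle is bookkeeping in the continuity step: one must make sure that changing the horizontal coordinate inside $U_k(\alpha)$ never alters letters of $\vec w$ itself nor inserts $b$-letters, which is exactly why $k$ has to dominate both stabilization indices; once that choice is made, both directions reduce to the argument that $f_\omega:\Y\to\Top(T_2)$ is a p-morphism, applied coordinatewise.
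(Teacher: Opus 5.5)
Your proof is correct. The paper gives no argument of its own here (it only cites Kudinov's work), and your direct check on basic sets is the natural verification: you take the index at least $\max(\st(\alpha),\st(\beta))$, so that varying one coordinate inside a basic neighbourhood only appends letters of one sort. This is the same stabilization trick the paper uses later when verifying $f$ in parts (I)c and (II)e.

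One step needs tightening: the claim $\alpha''\in U_k(\alpha)$. Proposition~\ref{prop:top_1} only tells you that $U_k(\alpha')$ and $U_k(\alpha)$ are comparable, not which contains which. Note instead that $\st(\alpha')\ge\st(\alpha)$, since $\alpha'$ agrees with $\alpha$ up to position $\max(k,\st(\alpha))$. Hence $m\ge\max(k,\st(\alpha))$, and $\alpha''$ agrees with $\alpha$ on that prefix.
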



We denote: $\NN_{\ge n} = \setdef[k\in \NN]{k\ge n}$. 

We define $\X = (X, T)$ as follows:
\begin{align*}
	X &= W_\omega \times \NN,\\	
	U'_k (\alpha,0) &= \left (U_k(\alpha) \times \set{0}\right ) \cup \left (U_k(\alpha) \times \NN_{\ge k}\right ), \\
	U'_k (\alpha,n) &= \set{\pair{\alpha,n}}, \hbox{ where } n\ge 1. 
\end{align*}
The sets $U'_k(\alpha, n)$ form a base for the topology $T$. It is enough to check that the sets of the form $U'_k(\alpha, n)$ either do not intersect or one is contained in the other.

Points of the form $\pair{\alpha, n}$  ($n\ge 1$) are isolated, so each base element contains isolated points. Therefore,
\begin{lemma}\label{lem:S41xS4corr}
	The space $\X$  is weakly scattered and $\X \models A1$.
\end{lemma}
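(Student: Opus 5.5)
The plan has three steps: confirm that the sets $U'_k(\alpha,n)$ really form a base for $T$, show that every nonempty open set contains an isolated point, and then pass from weak scatteredness to validity of $A1$.

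\textbf{The base.} I would use the criterion from \cite{engelking_general_topology}. The family covers $X$: $\pair{\alpha,0}\in U'_k(\alpha,0)$ for any $k$, and each $\pair{\alpha,n}$ with $n\ge 1$ lies in its own singleton. For the intersection condition it suffices to show that any two base elements are either disjoint or nested.
\begin{itemize}
\item If both elements are singletons, this is immediate.
\item If one is a singleton $\set{\pair{\beta,n}}$, it is either contained in the other element or disjoint from it.
\item For $U'_k(\alpha,0)$ and $U'_m(\beta,0)$, Proposition \ref{prop:top_1} says $U_k(\alpha)$ and $U_m(\beta)$ are disjoint or nested.
\end{itemize}
In the disjoint case the primed sets are disjoint, since their first coordinates lie in $U_k(\alpha)$ and $U_m(\beta)$ respectively.

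\textbf{The nested case, which is the main point to check.} Suppose $U_m(\beta)\subseteq U_k(\alpha)$. I want $U'_m(\beta,0)\subseteq U'_k(\alpha,0)$, which needs $\NN_{\ge m}\subseteq\NN_{\ge k}$, i.e. $m\ge k$. If $m\ge k$ we are done. If $m<k$, I would argue from the definition of $U_k$: a smaller index gives a weakly larger neighbourhood of the same point. So if $U_m(\beta)\subseteq U_k(\alpha)$ with $m<k$, then $U_k(\alpha)=U_m(\beta)$, with both determined by the same prefix length $\max(k,\st(\alpha))=\max(m,\st(\beta))$. One can then replace the index by the larger one: if the sets are equal, $U'_m(\beta,0)$ and $U'_k(\alpha,0)$ differ only in the tail sets $\NN_{\ge m}$ versus $\NN_{\ge k}$, and one contains the other. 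In every case the two primed sets are nested, so the criterion holds and $T$ is a topology.

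\textbf{Weak scatteredness and $A1$.} Every nonempty open set contains a base element. A singleton base element is an isolated point. A set $U'_k(\alpha,0)$ contains $\pair{\alpha,k}$ with $k\ge 1$, which is isolated because $\set{\pair{\alpha,k}}$ is open. Hence the isolated points are dense and $\X$ is weakly scattered. Validity of $A1$ then follows from the theorem of \cite{gabelaia_modal_def_2001} that $\logic{S4.1}$ is the logic of weakly scattered spaces. Directly, one can also argue this way: under any valuation, if $\Box\romb p$ holds at $x$ on an open $U$, take an isolated point $x'$ in $U$. There $\romb p$ means $p$, so $\Box p$ holds at $x'$. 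Since $x'$ lies in every neighbourhood of $x$ taken inside $U$, we get $\romb\Box p$ at $x$.
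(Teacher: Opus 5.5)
Your argument for weak scatteredness and for $A1$ is the paper's: every $U'_k(\alpha,0)$ with $k\ge 1$ contains the isolated point $\pair{\alpha,k}$, and $A1$ follows from the theorem that $\logic{S4.1}$ is the logic of weakly scattered spaces.

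The gap is in your check that the sets $U'_k(\alpha,n)$ form a base. Two of your claims there are false: that two such sets are always disjoint or nested, and that $U_m(\beta)\subseteq U_k(\alpha)$ with $m<k$ forces $U_m(\beta)=U_k(\alpha)$. Take $\alpha=0^\omega$, $k=2$, $\beta=0\,0\,1\,0^\omega$, $m=1$. Since $\st(\beta)=3$, $U_1(\beta)$ is the set of sequences beginning with $001$. This is a proper subset of $U_2(\alpha)$, the sequences beginning with $00$. Then $\pair{\beta,1}\in U'_1(\beta,0)\setminus U'_2(\alpha,0)$ and $\pair{\alpha,0}\in U'_2(\alpha,0)\setminus U'_1(\beta,0)$, while $\pair{\beta,0}$ lies in both.

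Your heuristic ``a smaller index gives a larger neighbourhood of the same point'' compares $U_m(\beta)$ with $U_k(\beta)$, not with $U_k(\alpha)$. For $\beta\in U_k(\alpha)$ with $\st(\beta)>\max(k,\st(\alpha))$, one has $U_k(\beta)\subsetneq U_k(\alpha)$. The paper's one-line justification rests on the same disjoint-or-nested claim, so it needs the same repair.

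The repair is to check the base criterion directly rather than through nestedness. Suppose $\pair{\gamma,0}\in U'_k(\alpha,0)\cap U'_m(\beta,0)$, and put $j=\max(k,m,\st(\alpha),\st(\beta))$.
Then $\gamma$ agrees with $\alpha$ on the first $\max(k,\st(\alpha))\le j$ coordinates, and with $\beta$ on the first $\max(m,\st(\beta))\le j$ coordinates. Also $U_j(\gamma)$ fixes at least the first $j$ coordinates of $\gamma$, so $U_j(\gamma)\subseteq U_k(\alpha)\cap U_m(\beta)$.
Together with $\NN_{\ge j}\subseteq\NN_{\ge k}\cap\NN_{\ge m}$, this gives $\pair{\gamma,0}\in U'_j(\gamma,0)\subseteq U'_k(\alpha,0)\cap U'_m(\beta,0)$.
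Points $\pair{\gamma,n}$ with $n\ge 1$ are handled by their singletons, and the rest of your proof then goes through.

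There is also a smaller slip in your optional direct proof of $A1$. A fixed isolated point $x'\in U$ need not lie in every neighbourhood of $x$. Instead, for each open $V\ni x$, choose an isolated point of $V\cap U$; at it $\romb p$ holds, hence $p$, hence $\Box p$.
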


Let us consider a bitopological space $\X \times \X$.

\begin{proposition}
In $\X \times \X$, for any $n, k > 0$,
\begin{enumerate}[label=\arabic*.] 
    \item $(\pair{\alpha, n}, \pair{\beta, 0})$ is an isolated point in the horizontal topology.
    \item $(\pair{\alpha, 0}, \pair{\beta, k})$ is an isolated point in the vertical topology
    \item $(\pair{\alpha, n}, \pair{\beta, k})$ is an isolated point in both topologies.
\end{enumerate}
\end{proposition}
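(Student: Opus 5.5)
The plan is to reduce everything to the observation that in $\X$ every point $\pair{\alpha,n}$ with $n\ge 1$ is isolated. By definition of the base, $U'_k(\alpha,n)=\set{\pair{\alpha,n}}$ for $n\ge 1$, and this set is a basic open set of $T$. Hence $\set{\pair{\alpha,n}}\in T$.

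\textbf{Item 1.} The horizontal topology $T^h$ on $\X\times\X$ has the base $\setdef[U\times\set{y}]{U\in T,\ y\in X}$. Take $U=\set{\pair{\alpha,n}}$, which is open in $\X$ because $n\ge 1$, and $y=\pair{\beta,0}$. Then
\[
\set{(\pair{\alpha,n},\pair{\beta,0})}=\set{\pair{\alpha,n}}\times\set{\pair{\beta,0}}
\]
is a basic open set of $T^h$. So the point is isolated in the horizontal topology.

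\textbf{Item 2.} This is symmetric. In the vertical topology $T^v$, the set $\set{\pair{\alpha,0}}\times\set{\pair{\beta,k}}$ is a basic open set, since $\set{\pair{\beta,k}}\in T$ for $k\ge 1$.

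\textbf{Item 3.} Both coordinates are now isolated in $\X$. The singleton $\set{(\pair{\alpha,n},\pair{\beta,k})}$ can be written as $\set{\pair{\alpha,n}}\times\set{\pair{\beta,k}}$. Read with the first factor open, it is horizontally open; read with the second factor open, it is vertically open.

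\textbf{Main obstacle.} There is essentially none. The only points needing care are that the second coordinate in item 1 (resp. the first in item 2) is arbitrary, and that the base elements $U\times\set{y}$ impose no condition on $y$. One should also note that the first-coordinate value $\pair{\beta,0}$ in item 1 plays no role, so the same argument gives the slightly stronger statement that $(\pair{\alpha,n},y)$ is horizontally isolated for every $y\in X$ whenever $n\ge 1$.
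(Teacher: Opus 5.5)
Your proof is correct and follows the paper's own reasoning: the paper states this proposition right after observing that every $\pair{\alpha,n}$ with $n\ge 1$ is isolated in $\X$, and then reads it off from the horizontal and vertical bases $U\times\set{y}$ and $\set{x}\times U$, exactly as you do. One small slip: in your closing remark, $\pair{\beta,0}$ is the \emph{second} coordinate in item 1, not the first.
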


\begin{lemma}
	$\X \times \X \pmor \Top_2(\mathbb{\widetilde{T}})$.
\end{lemma}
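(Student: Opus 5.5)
We will define an explicit map $F\colon \X\times\X\to W$ and check that it is surjective, and continuous and open in each of the two topologies separately, with $W$ carrying the topologies $T_{R'_1}, T_{R'_2}$ of $\Top_2(\mathbb{\widetilde{T}})$. Since both spaces have bases of the form described above, it suffices to check continuity and openness on basic neighbourhoods. The earlier Lemma on p-morphisms versus open continuous maps lets us phrase these checks Kripke-style: for each point $x$ and each basic neighbourhood $U$ of $x$, we need that $F(U)$ contains the minimal neighbourhood $R'_i(F(x))$ (openness), and that some basic neighbourhood of $x$ maps into $R'_i(F(x))$ (continuity).

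\textbf{The map $F$.} We split $X\times X$ by the second coordinates, following the Proposition on isolated points.
\begin{enumerate}
\item Points $(\pair{\alpha,0},\pair{\beta,0})$ go to $\pair{g(\alpha,\beta),\rc}$, where $g$ is the map of Proposition~\ref{prop:pmorphismg}.
\item Points $(\pair{\alpha,n},\pair{\beta,0})$ with $n\ge1$ are horizontally isolated, i.e.\ $R'_1$-maximal. They go into the attached copy of $T_2^{(2)}$. We send such a point to $\pair{g(\alpha,\beta\lceil_n0^\omega),\,f_\omega(\sigma^n\beta),\,\bct}$, where $\sigma^n\beta$ is $\beta$ with its first $n$ letters deleted. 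The index $n$ thus serves as the ``moment'' at which we left the main tree.
\item Points $(\pair{\alpha,n},\pair{\beta,k})$ with $n,k\ge1$ are isolated in both topologies. They go to the attached maximal points $\mct$ (or $\mco$) above the image of the corresponding $\bct$-point ($\bco$-point). The choice between the two is made by a fixed convention, e.g.\ comparing $n$ and $k$.
\item The case $(\pair{\alpha,0},\pair{\beta,k})$ is mirror-symmetric: it goes to $\pair{g(\alpha\lceil_k0^\omega,\beta),\,f_\omega(\sigma^k\alpha),\,\bco}$.
\end{enumerate}

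\textbf{Surjectivity and the easy checks.} Surjectivity follows from the surjectivity of $g$ and $f_\omega$. At the isolated coordinates, continuity and openness are trivial. In the nontrivial direction, e.g.\ vertically at $(\pair{\alpha,n},\pair{\beta,0})$, a basic neighbourhood $\{\pair{\alpha,n}\}\times U'_k(\beta,0)$ with large $k$ fixes $\beta\lceil_n$. Hence the first component $g(\alpha,\beta\lceil_n0^\omega)$ is constant on it, and the second component moves exactly as $f_\omega$ does on $U_{k-n}(\sigma^n\beta)$. The isolated points $\pair{\beta',m}$ with $m\ge k$ in that neighbourhood supply the $\mct$-successors. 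So the lifting property and monotonicity reduce to the fact that $f_\omega\colon\Y\to\Top(T_2)$ is a p-morphism. At the points of the first type, the $\pair{\alpha',0}$-part of a horizontal neighbourhood is handled by Proposition~\ref{prop:pmorphismg}. The new part, namely the isolated points $\pair{\alpha',n}$ with $n\ge k$, must land in $R'_1(\pair{g(\alpha,\beta),\rc})$. Conversely, for openness, the successor $\pair{g(\alpha,\beta),\eps^{(2)},\bct}$ must be hit; it is, by taking $\alpha'=\alpha$ and $n\ge\max(k,\st(\beta))$, because then $\beta\lceil_n0^\omega=\beta$ and $\sigma^n\beta=0^\omega$.

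\textbf{Main obstacle.} The hard step is monotonicity (continuity) at the points of the first type. A horizontal basic neighbourhood $U'_k(\alpha,0)\times\{\pair{\beta,0}\}$ contains points $(\pair{\alpha',n},\pair{\beta,0})$ with $k\le n<\st(\beta)$. Their images $\pair{g(\alpha',\beta\lceil_n0^\omega),f_\omega(\sigma^n\beta),\bct}$ need not be $R'_1$-above $\pair{g(\alpha,\beta),\rc}$. I would first try to repair this by re-indexing, so that the cutoff for $\beta$ is read off the horizontal level in the interleaved word defining $g$, i.e.\ position $2n$ versus $2n-1$. Failing that, I would shift the tree coordinate, so that all points with $n<\st(\beta)$ are sent to the root-type point $\pair{g(\alpha',\beta),\eps^{(2)},\bct}$ and only the tail beyond $\st(\beta)$ is fed into $f_\omega$. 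With either choice I would then recheck vertical openness: the tail must still range over all of $T_2$ inside small neighbourhoods, which is where the two requirements pull against each other. Once this compatibility is fixed, the remaining cases are routine, and Lemma~\ref{thm:top_pmorphism} together with Lemma~\ref{lem:tildaT} finishes Theorem~\ref{thm:main}.
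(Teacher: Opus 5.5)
The map you write down in items 1--4 is essentially the paper's $f$, and it works. The only difference is that the paper puts $g(\alpha\lceil_n0^\omega,\beta\lceil_n0^\omega)$ in the first component of the image of $(\pair{\alpha,n},\pair{\beta,0})$ where you put $g(\alpha,\beta\lceil_n0^\omega)$, and symmetrically for the $\bco$-points; both choices work. Your routine checks also agree with the paper's.

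\textbf{The gap.} The proposal does not finish the proof. You declare horizontal continuity at $x=(\pair{\alpha,0},\pair{\beta,0})$ an unresolved obstacle and end by proposing to modify the map. The obstacle is not real. By your own formulation, continuity only asks for \emph{some} basic neighbourhood of $x$ mapping into $R'_1(F(x))$. Take $k\ge\max(\st(\alpha),\st(\beta))$, and large enough that $g(U_k(\alpha)\times\set{\beta})\subseteq R_1(g(\alpha,\beta))$; this is possible by continuity of $g$. Then every point of $U'_k(\alpha,0)\times\set{\pair{\beta,0}}$ with nonzero first index has the form $(\pair{\alpha',n},\pair{\beta,0})$ with $\alpha'\in U_k(\alpha)$ and $n\ge k\ge\st(\beta)$. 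Hence $\beta\lceil_n0^\omega=\beta$ and $\sigma^n\beta=0^\omega$, so its image is $\pair{g(\alpha',\beta),\eps^{(2)},\bct}$, which lies in $R'_1(\pair{g(\alpha,\beta),\rc})$. The points with $k\le n<\st(\beta)$ that worried you occur only in the coarse neighbourhoods with $k<\st(\beta)$, which you never need for continuity. For openness they are harmless, since their images are $R'_1$-maximal. This is exactly the paper's argument in (I)(c) and (II)(e).

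\textbf{The proposed repairs.} They are unnecessary, and the second one would break the proof. Suppose every $(\pair{\alpha,n},\pair{\beta,0})$ with $n<\st(\beta)$ were sent to $\pair{g(\alpha,\beta),\eps^{(2)},\bct}$. In a small vertical neighbourhood $\set{\pair{\alpha,n}}\times U'_s(\beta,0)$, every $\beta'$ still has $\st(\beta')>n$. So the images $\pair{g(\alpha,\beta'),\eps^{(2)},\bct}$ are roots of $\bct$-trees attached at different points of $T_{2,2}$. These roots are not $R'_2$-related, so vertical continuity fails.

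\textbf{Two smaller omissions.}
\begin{enumerate}
\item For horizontal openness at $x$ you must hit all of $R'_1(F(x))$, not only $\pair{g(\alpha,\beta),\eps^{(2)},\bct}$. For each $\vec a'\in R_1(g(\alpha,\beta))$, use openness of $g$ to write $\vec a'=g(\alpha',\beta)$ with $\alpha'\in U_k(\alpha)$, then take $n\ge\max(k,\st(\beta))$.
\item The $\mco/\mct$ convention must be made explicit, because it is forced. Doubly isolated points must go to points maximal in both relations. The points $(\pair{\alpha',m},\pair{\beta,k})$ with $m$ large lie in horizontal neighbourhoods of preimages of $\bco$-points, so they must go to $\mco$. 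Symmetrically, $(\pair{\alpha,n},\pair{\beta',m})$ with $m$ large must go to $\mct$. The paper's rule ($n\ge k$ gives $\mco$, $n<k$ gives $\mct$) satisfies both requirements.
\end{enumerate}
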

\begin{proof}
    To construct the required p-morphism we use the p-morphism from Proposition \ref{prop:pmorphismg}:
    $$
    g:\Y\times \Y \pmor Top_2(\mathbb{T}_{2,2}).
    $$

    We also define functions $f^{(1)}_{\omega}:W_{\omega}\to T^{(1)}_2$ and $f^{(2)}_{\omega}:W_{\omega}\to T^{(2)}_2$ as follows
    \begin{align*}
    f^{(1)}_{\omega}(\alpha) &= m_1(f_{\omega}(\alpha))\text{, where } m_1:\set{1,2}^\ast\to\set{a_1,a_2}^\ast\text{, }m_1\text{ respects concatenation}\\
    & \text{and }m_1(1) = a_1,\; m_1(2)=a_2,\\
    f^{(2)}_{\omega}(\alpha) &= m_2(f_{\omega}(\alpha))\text{, where } m_2:\set{1,2}^\ast\to\set{b_1,b_2}^\ast\text{, }m_2\text{ respects concatenation}.\\
    & \text{and }m_2(1) = b_1,\; m_2(2)=b_2\\
    \end{align*}

    These functions erase zeros and map infinite words from $W_\omega$ to trees $T^{(1)}_2$ and $T^{(2)}_2$ respectively.

    We now define a function $ f:\X \times \X \to \mathbb{\widetilde{T}} $. For $n,k >0$,
	\begin{align*}
		f(\pair{\alpha, 0}, \pair{\beta, 0}) &= \pair{g(\alpha, \beta), \rc},\\
		 f(\pair{\alpha, n}, \pair{\beta, 0})
		&=\pair {g(\alpha\lceil_n \cdot 0^\omega, \beta\lceil_n \cdot 0^\omega), f^{(2)}_{\omega}(\gamma), \bct},\hbox{ where } \beta = \beta\lceil_n\;\cdot\;\gamma,\\
            f(\pair{\alpha, 0}, \pair{\beta, k})
		&=\pair {g(\alpha\lceil_k \cdot 0^\omega, \beta\lceil_k \cdot 0^\omega), f^{(1)}_{\omega}(\gamma), \bco},\hbox{ where } \alpha = \alpha\lceil_k\;\cdot\;\gamma,\\
     f(\pair{\alpha, n}, \pair{\beta, k})
		&=\pair {g(\alpha\lceil_k \cdot 0^\omega, \beta\lceil_k \cdot 0^\omega), f^{(1)}_{\omega}(\gamma), \mco},\hbox{ where } \alpha = \alpha\lceil_k\;\cdot\;\gamma, \hbox{ and } n \ge k\\
            f(\pair{\alpha, n}, \pair{\beta, k})
		&=\pair {g(\alpha\lceil_n \cdot 0^\omega, \beta\lceil_n \cdot 0^\omega), f^{(2)}_{\omega}(\gamma), \mct},\hbox{ where } \beta = \beta\lceil_n\;\cdot\;\gamma, \hbox{ and } n < k.\\
	\end{align*}

\begin{lemma}
   Map $f$ is surjective.
\end{lemma}
\begin{proof}
     It follows from the surjectivity of $g$ and $f_\omega$.
\end{proof}
	
 Let us check that $f$ is open and continuous.\\

 (I) $T^h$-openness.\\

a) For any $n>0$, the point $(\pair{\alpha, n}, \pair{\beta, 0})$ is h-isolated. Its image is a point in the attached tree $T_2^{(2)}$, i.e. also an h-isolated point.

b) For any $n,k>0$, the point  $ (\pair{\alpha, n}, \pair{\beta, k})$ is h-isolated. Its image is one of the maximal points in the attached tree, i.e. also an h-isolated point.

c) For point $(\pair{\alpha, 0}, \pair{\beta, 0})$, let us consider its horizontal neighborhood $U'_s(\alpha, 0) \times \set{\pair{\beta, 0}}$. Without loss of generality, we can assume that $s\ge\max(\st(\alpha), \st(\beta))$. 

\begin{lemma} If $s\ge\max(\st(\alpha), \st(\beta))$, then
    $$f(U'_s(\alpha, 0) \times \set{\pair{\beta, 0}}) = g \left(U_s(\alpha), \beta \right)\times\set{\rc} \cup  g \left(U_s(\alpha), \beta \right) \times \set{\eps^{(2)}}\times \set{\bct}.$$
\end{lemma}
\begin{proof}
    The set $U'_s(\alpha, 0)$ consists of two parts, so:
    
    $$U'_s(\alpha, 0) \times \set{\pair{\beta, 0}} = \left(U_s(\alpha)\times\set{0} \times \set{\pair{\beta, 0}}\right) \cup \left(U_s(\alpha)\times\NN_{\ge s} \times \set{\pair{\beta, 0}}\right).$$ 
    Let us consider the image of each part separately.

    \begin{itemize}
        \item $f\left(U_s(\alpha)\times\set{0} \times \set{\pair{\beta, 0}}\right) = \pair{g \left(U_s(\alpha)\times \set{\beta}\right), \rc}$ by the definition of $f$.
        \item If $\alpha' \in U_s(\alpha)$ and $d \geq s$, then
        $$
        f\left(\pair{\alpha', d}, \pair{\beta, 0}\right) = \pair {g(\alpha'\lceil_d \cdot 0^\omega, \beta\lceil_d \cdot 0^\omega), f^{(2)}_{\omega}(\gamma), \bct},\hbox{ where } \beta = \beta\lceil_d\;\cdot\;\gamma.
        $$
    
        But $d \geq \st(\beta)$, and therefore $\beta\lceil_d\cdot 0^\omega = \beta, \; \gamma = 0^\omega$ and 
        $$
        f\left(\pair{\alpha', d}, \pair{\beta, 0}\right) = \pair {g(\alpha'\lceil_d \cdot 0^\omega, \beta), \eps^{(2)}, \bct}.
        $$

        Also $d \geq \st(\alpha)$, and therefore $\alpha'\lceil_d \cdot 0^\omega \in U_s(\alpha)$ and 
        $$
        f\left(U_s(\alpha)\times\NN_{\ge s} \times \set{\pair{\beta, 0}}\right) \subseteq \pair{g \left(U_s(\alpha), \beta\right), \eps^{(2)}, \bct}.
        $$

        \item Now we prove the inclusion in the opposite direction: let $\vec{w} \in g (U_s(\alpha), \beta)$. That is $g(\alpha', \beta) = \vec{w}$ for some $\alpha' \in U_s(\alpha)$. Then for $d = \max(st(\alpha'), s, \st(\beta))$ we have $\alpha'\lceil_d\cdot 0^\omega = \alpha'$ and 
        $\pair {g(\alpha'\lceil_d \cdot 0^\omega, \beta), \eps^{(2)}, \bct} = \pair {g(\alpha', \beta), \eps^{(2)}, \bct}$. Therefore, \hbox{$f\left(\pair{\alpha', d}, \pair{\beta, 0}\right) = \pair{\vec{w}, \eps^{(2)}, \bct}$}.

        So, we have $\alpha'\lceil_d \in U_s(\alpha)$ and $f\left(U_s(\alpha)\times\NN_{\ge s} \times \set{\pair{\beta, 0}}\right) \supseteq \pair{g \left(U_s(\alpha), \beta\right), \eps^{(2)}, \bct}$.
    \end{itemize}
\end{proof}

The set $g \left(U_s(\alpha), \beta\right)\times\set{\rc, \eps^{(2)}\times\bct}$ is h-open, since $g$ is open, and points of the form $\pair{\vec{w}, \eps^{(2)}, \bct}$ are maximal with respect to the first relation.

d) For point $(\pair{\alpha, 0}, \pair{\beta, k})$ (where $k>0$), let us consider its horizontal neighborhood $U'_s(\alpha, 0) \times \set{\pair{\beta, k}}$. Without loss of generality, we can assume that $s\ge\max(\st(\alpha), \st(\beta), k+1)$. 
\begin{lemma}
	If $s\ge\max(\st(\alpha), \st(\beta), k+1)$, then
    $$
    \displaystyle f(U'_s(\alpha, 0) \times \set{\pair{\beta, k}}) = \{\vec{w}\} \times f_{\omega}^{(1)}(V) \times \{\bco, \mco\}
    $$ 
    for some $\vec{w} \in T_{2,2}$ and open $V \subseteq W_\omega$.
\end{lemma}

\begin{proof}
    The set $U'_s(\alpha, 0)$ consists of two parts, so:    
    $$
    U'_s(\alpha, 0) \times \set{\pair{\beta, k}} = \left(U_s(\alpha)\times\set{0} \times \set{\pair{\beta, k}}\right) \cup \left(U_s(\alpha)\times\NN_{\ge s} \times \set{\pair{\beta, k}}\right).$$ 
    Let us consider the image of each part separately.

    \begin{itemize}
        \item The image of the first part is equal to 
        \begin{align*}
        f\left(U_s(\alpha)\times\set{0} \times \set{\pair{\beta, k}}\right) &= \bigcup_{\alpha' \in U_s(\alpha)}f(\pair{\alpha', 0}, \pair{\beta, k})\\
        &= \bigcup_{{\substack{\alpha'\in U_s(\alpha)\\ \alpha'=\alpha'\lceil_k\gamma}}}\pair {g(\alpha'\lceil_k \cdot 0^\omega, \beta\lceil_k \cdot 0^\omega), f^{(1)}_{\omega}(\gamma), \bco}.        	
        \end{align*}
        
        Since $s \geq k+1$, for $\alpha' \in U_s(\alpha)$ we have $\alpha'\lceil_k = \alpha\lceil_k$. If $\vec{w} = g(\alpha\lceil_k \cdot 0^\omega, \beta\lceil_k \cdot 0^\omega)$, then 
        $$ 
        f\left(U_s(\alpha)\times\set{0} \times \set{\pair{\beta, k}}\right) = \bigcup_{{\substack{\alpha'\in U_s(\alpha)\\ \alpha'=\alpha'\lceil_k\gamma}}}\pair {\vec{w}, f^{(1)}_{\omega}(\gamma), \bco}.
        $$

        \item Consider the image $f\left(U_s(\alpha)\times\set{d} \times \set{\pair{\beta, k}}\right)$ for $d \geq s \geq k+1$. From the definition of $f$, it is clear that the only difference from the previous case is the replacement of $\bco$ by $\mco$.
        \begin{align*}
        	f&\left(U_s(\alpha)\times\set{d} \times \set{\pair{\beta, k}}\right) = \bigcup_{\alpha' \in U_s(\alpha)}f(\pair{\alpha, d}, \pair{\beta, k})\\ 
        	&= \bigcup_{{\substack{\alpha'\in U_s(\alpha)\\ \alpha'=\alpha'\lceil_k\gamma}}}\pair {g(\alpha'\lceil_k \cdot 0^\omega, \beta\lceil_k \cdot 0^\omega), f^{(1)}_{\omega}(\gamma), \mco} = \bigcup_{{\substack{\alpha'\in U_s(\alpha)\\ \alpha'=\alpha'\lceil_k\gamma}}}\pair {\vec{w}, f^{(1)}_{\omega}(\gamma), \mco}
        \end{align*}
        where $\vec{w} = g(\alpha\lceil_k \cdot 0^\omega, \beta\lceil_k \cdot 0^\omega)$.
    \end{itemize}
\end{proof}

The set $\{\vec{w}\} \times f_{\omega}^{(1)}(V) \times \{\bco, \mco\}$ is h-open, since $f_\omega^{(1)}$ is an open map.
\bigskip

  (II) $T^h$-continuity.

a) For $\vec b \in T_2^{(2)}$, a point $\pair{\vec a, \vec b, \bct}$  is h-isolated and by the definition of $f$ its preimage consists of points of the form $(\pair{\alpha, n}, \pair{\beta, 0})$ for $n \geq 1$. And all these points are h-isolated.\\

b) For $\vec b \in T_2^{(2)}$, a point $\pair{\vec a, \vec b, \mct}$  is h-isolated and by the definition of $f$ its preimage consists of points of the form $(\pair{\alpha, n}, \pair{\beta, k})$ for $k > n \geq 1$. And all these points are h-isolated.

c) For $\vec b \in T_2^{(1)}$, consider a point $\pair{\vec a, \vec b, \bco}$ and its minimal neighborhood 
$$ 
R'_1(\pair{\vec a, \vec b, \bco}) = \left(\bigcup_{\vec c \in \set{a_1,a_2}^\ast}\set{\pair{\vec a, \vec b \cdot \vec c, \bco}}\right) \cup \left(\bigcup_{\vec c \in \set{a_1,a_2}^\ast}\set{\pair{\vec a, \vec b \cdot \vec c, \mco}}\right).
$$
Its preimage also consists of two parts: the preimage of the left bracket and the preimage of the right one.

For $\vec{c} \in \{a_1,a_2\}^*$, let us consider a preimage $f^{-1}(\pair{\vec a, \vec b \cdot \vec c, \bco})$. By the definition of $f$ it consists of points of the form $(\pair{\alpha, 0}, \pair{\beta, k})$ for $k\ge 1$. We take such point $(\pair{\alpha, 0}, \pair{\beta, k}) \in f^{-1}(\pair{\vec a, \vec b \cdot \vec c, \bco})$. By the definition $g(\alpha\lceil_k0^{\omega}, \beta\lceil_k0^{\omega}) = \vec a$, so
$$
U'_s(\pair{\alpha, 0})\times\pair{\beta, k} = \left (U_s(\alpha) \times \set{0}\right ) \cup \left (U_s(\alpha) \times \NN_{\ge s}\right ) \times \pair{\beta, k},
$$ 
for any $s \ge \max(k, \st(\alpha), \st(\beta))$. And for any $\alpha' \in U_s(\alpha)$,  $\alpha\lceil_k = \alpha'\lceil_k$, and therefore all points of neighborhood $U'_s(\pair{\alpha, 0})\times\pair{\beta, k}$ are mapped by $f$ onto the same attached tree. Since the first $\st(\alpha)$ symbols of $\alpha$ and $\alpha'$ coincide, their images lie in the subtree rooted at $\vec b \cdot \vec c$.

Now for some $\vec{c} \in \{a_1,a_2\}^*$ let us consider the preimage $f^{-1}(\pair{\vec a, \vec b \cdot \vec c, \mco})$. By the definition of $f$ it consists of points of the form $(\pair{\alpha, n}, \pair{\beta, k})$ for $n \ge k\ge 1$. These points are h-isolated. Clearly, image of its minimal neighborhood is $\set{\pair{\vec a, \vec b \cdot \vec c, \mco}}$.

d) For $\vec b \in T_2^{(1)}$, a point $\pair{\vec a, \vec b, \mco}$ is h-isolated and by the definition of $f$ its preimage consists of points of the form $(\pair{\alpha, n}, \pair{\beta, k})$ for $ n \ge k \ge 1$. And these points are h-isolated.

e) It remains to consider the point $\pair{\vec a, \rc}$. Its minimal h-neighborhood is equal to 
$$
\left(\bigcup_{\vec b \in \set{a_1,a_2}^\ast}\set{\pair{\vec a\cdot\vec b,\rc}}\right)\cup\left(\bigcup_{\vec b \in \set{a_1,a_2}^\ast}\set{\pair{\vec a\cdot\vec b,\eps^{(2)},\bct}}\right).$$ 
Its preimage also consists of two parts: the preimage of the left bracket and the preimage of the right one.

For some $\vec{b} \in \set{a_1,a_2}^*$, let us consider the preimage $f^{-1}(\pair{\vec a\cdot\vec b,\rc})$. 
By the definition of $f$ it consists of points of the form $(\pair{\alpha, 0}, \pair{\beta, 0})$. 
Let us take  $(\pair{\alpha, 0}, \pair{\beta, 0}) \in f^{-1}(\pair{\vec a\cdot\vec b,\rc})$. 
Then we have $g(\alpha, \beta) = \vec a\cdot\vec b$. 
The function $g$ is continuous, so $\exists s\left(  g(U_s(\alpha) \times \set{\beta}) \subseteq R_1(\vec{a}\cdot\vec{b})\right)$. 
Then for $m\ge \max(s, \st(\alpha), \st(\beta))$ let us take neighborhood 
\begin{align*}
U'_m(\pair{\alpha, 0})\times\pair{\beta, 0} &=(U_m(\alpha)\times\set{0}\cup U_m(\alpha)\times\NN_{\ge m})\times\pair{\beta, 0}\\
&=U_m(\alpha)\times\set{0} \times\pair{\beta, 0} \cup U_m(\alpha)\times\NN_{\ge m} \times\pair{\beta, 0}
\end{align*}
The image of its first part is $f(U_m(\alpha)\times\set{0}\times\pair{\beta, 0}) \subseteq R'_1(\pair{\vec{a}\cdot\vec{b}, \rc})$. Note also that, due to the conditions on $m$, for any $ \alpha' \in U_m(\alpha)$ and  $ n' \ge m$, we have  $\alpha'\lceil_{n'}0^{\omega}=\alpha'$ and $\beta\lceil_{n'}0^{\omega} = \beta$. Hence, $g(\alpha'\lceil_{n'}0^{\omega}, \beta\lceil_{n'}0^{\omega}) = g(\alpha', \beta) \in g(U_m(\alpha), \beta) \subseteq R_1(\vec{a}\cdot\vec{b})$. And for the same $\alpha'$ and $n'$, we have 
$$
f(\pair{\alpha', n'}, \pair{\beta, 0})=\pair{g(\alpha'\lceil_{n'}0^\omega, \beta\lceil_{n'}0^\omega), f_{\omega}^{(2)}(\gamma),\bct}=\pair{g(\alpha', \beta), \eps^{(2)},\bct},
$$
then
\[  
\pair{g(\alpha', \beta), \eps^{(2)},\bct} \in \pair{g(U_m(\alpha), \beta), \eps^{(2)},\bct} \subseteq R'_1(\pair{\vec a\cdot\vec b, \rc}).
\]

For some $\vec{b} \in \{a_1,a_2\}^*$, consider the preimage $f^{-1}(\pair{\vec a\cdot\vec b,\eps^{(2)},\bct})$. By the definition of $f$ it consists of points of the form $(\pair{\alpha, n}, \pair{\beta, 0})$ for $n\ge 1$. These points are h-isolated. The image of the minimal neighborhood of such point is $\set{\pair{\vec a\cdot\vec b,\eps^{(2)},\bct}}$.
\bigskip

(III) $T^v$-openness and $T^v$-continuity.

The verification for the vertical topology repeats the horizontal one with
the roles of the two coordinates interchanged and with the following
replacements: $R'_1 \leftrightarrow R'_2$, $\bco \leftrightarrow \bct$,
$\mco \leftrightarrow \mct$, $\eps^{(1)} \leftrightarrow \eps^{(2)}$,
$f^{(1)}_{\omega} \leftrightarrow f^{(2)}_{\omega}$,
$T_2^{(1)} \leftrightarrow T_2^{(2)}$. 

The only asymmetry of $f$ under this translation is on the diagonal: for
$n = k \ge 1$ the point $(\pair{\alpha, n}, \pair{\beta, k})$ is mapped to
an $\mco$-point rather than an $\mct$-point. This does not affect the
argument: every point with $n, k \ge 1$ is isolated in both topologies of
$\X \times \X$, and its image is a point of the form
$\pair{\,\cdot\,, \,\cdot\,, \mco}$ or $\pair{\,\cdot\,, \,\cdot\,, \mct}$,
which is maximal with respect to both relations, i.e.\ isolated in both
topologies of $\Top_2(\mathbb{\widetilde{T}})$. In all cases of the
translated argument such points occur only in the
``isolated~$\mapsto$~isolated'' steps, where the distinction between
$\mco$ and $\mct$ is irrelevant.

Thus, $f$ is a surjection which is open and continuous with respect to
both topologies, i.e.\ $f: \X \times \X \pmor \Top_2(\mathbb{\widetilde{T}})$.

\end{proof}


\section{Conclusion}
We have studied a particular example of a topological product. And yet we are left with lots of open problems. Among them: a description of topological products of other extensions of $\logic{S4}$, such as $\logic{S4.2}$, $\logic{S4.3}$, $\logic{S4.1.2}$ and others. Moreover, if instead of interpreting $\Diamond$ as the closure operator we interpret it as derivative operator, it becomes possible to study topological products of modal logics weaker than $\logic{S4}$. For example, $\logic{D4}$ or $\logic{K4}$. However, the most interesting problem is to find criteria, or at least sufficient conditions, under which the topological product of two logics coincides with their fusion or with the Kripke product.

\section{Acknowledgments}
The research leading to these results has received funding from the Basic Research Program at HSE University (HSE-BR-2025-84) for the second author.

Both authors are members of scientific group of the Junior Leader research grant 24-7-2-42-1 of the ``BASIS'' Foundation.


\bibliographystyle{plain}
\bibliography{bible}
\end{document}